\numberwithin{equation}{section}
\newtheorem{thm}{Theorem}[section]
\newtheorem{prop}[thm]{Proposition}
\newtheorem{defn}[thm]{Definition}
\newtheorem{lemma}[thm]{Lemma}
\newtheorem{remark}[thm]{Remark}
\newcommand{\nn}{\mathbb{N}}
\newcommand{\rr}{\mathbb{R}}
\newcommand{\eee}{\mathbb{E}}
\newcommand{\aaa}{\mathcal{L}}
\newcommand{\mm}{\mathcal{M}}
\newcommand{\hh}{\mathcal{H}}
\newcommand{\vphi}{\varphi}
\newcommand{\mf}{\mathcal{F}}
\newcommand{\pbm}{\mathbb{P}}
\newcommand{\rn}{\mathbb{R}}
\newcommand{\na}{\mathcal{L}}
\newcommand{\m}{\mathcal{M}}
\title{\textbf{Nonzero-sum stochastic games and mean-field games with impulse controls}}
\author{
Matteo Basei\footnote{Department of Industrial Engineering and Operations Research, University of California, Berkeley, USA, and EDF R\&D, Paris, France. Email: 
{\it matteo.basei@edf.fr}} \qquad 
Haoyang Cao\footnote{Department of Industrial Engineering and Operations Research, University of California, Berkeley, USA. Email: {\it cindy\_haoyang\_cao@berkeley.edu}}\qquad
Xin Guo\footnote{Department of Industrial Engineering and Operations Research, University of California, Berkeley, USA. Email: {\it xinguo@berkeley.edu}}
}
\date{\today}
\begin{document}

\maketitle

\begin{abstract}
\noindent We consider a general class of nonzero-sum $N$-player stochastic games with impulse controls, where players control the underlying dynamics with discrete interventions. We adopt a verification approach and provide sufficient conditions for the Nash equilibria (NEs) of the game. We then consider the limit situation of $N \to \infty$, that is, a suitable mean-field game (MFG) with impulse controls. We show that under appropriate technical conditions, the existence of unique NE solution to  the MFG, which is an $\epsilon$-NE approximation to the $N$-player game, with $\epsilon=O\left(\frac{1}{\sqrt{N}}\right)$. As an example, we analyze in details a class of two-player stochastic games which extends the classical cash management problem to the game setting. In particular, we present numerical analysis for the cases of the single player, the two-player game,  and the MFG, showing the impact of competition on the player's optimal strategy, with sensitivity analysis of the model parameters.
	\medskip \\
	
	\noindent\textbf{Keywords:} stochastic games, impulse controls, quasi-variational inequalities, mean-field games, cash management.
\end{abstract}
\vspace{0.5cm}

\section{Introduction}
\label{sec:introduction}
Recent development in the theory of Mean-Field Games (MFGs) has seen an exponential growth in studies of nonzero-sum stochastic games, especially in analyzing $N$-player games and their MFGs counterpart. This paper focuses on nonzero-sum stochastic $N$-player games and the corresponding MFGs in an  impulse control setting.

\paragraph{A motivating game problem.} The impulse control game is motivated by  the classical cash management problem in the seminal work \cite{Constantinides1978}.  Instead of a single-player game as in  \cite{Constantinides1978}, now consider $N$ players, each managing a flow of cash balance. For player $i \in \{1,\dots,N\}$, the uncontrolled cash balance is driven by 
\begin{equation*}
dX^i_t=b_i(X^i_{t-}) dt+\sigma_i(X^i_{t-})dW^i_t,\qquad X^i_{0-}=x_i,
\end{equation*}
where $W^i$ are independent real Brownian motions. Each player, say player $i$, chooses a sequence of random (stopping) times $(\tau_{i,1}, \tau_{i,2}, \cdots, \tau_{i,k}, \cdots)$ to intervene and exercise her control.
At each $\tau_{i,k}$,  the time of this player's $k$-th intervention, her control is denoted as $\tilde \xi_{i,k}$.  
Given the sequence $\{(\tau_{i,k}, \tilde \xi_{i,k})\}_{k\geq 1}$ for player $i$,  the dynamic of $X^i$ becomes
\begin{equation*}
dX^i_t=b_i(X^i_{t-}) dt+\sigma_i(X^i_{t-})dW^i_t+\sum_{\tau_{i,k}\le t}\delta(t-\tau_{i,k}) \tilde \xi_{i,k},\qquad X^i_{0-}=x_i,
\end{equation*}
with $\delta(\cdot)$ the Dirac function. The payoff for player $i$ is 
\begin{equation}\label{eqn-Nplayer}
\eee_x\left[\int_0^{\infty}e^{-rt}f_{i}(X_t)dt+\sum_{k=1}^{\infty}e^{-r\tau_{i,k}}\phi_i(\tilde \xi_{i,k})+\sum_{j\neq i}\sum_{k=1}^{\infty}e^{-r\tau_{j,k}}\psi_{i,j}(\tilde \xi_{j,k})\right].
\end{equation}
Here $X_t=(X_t^1, \cdots, X_t^N)$ with $x=(x_1, \cdots, x_N)$ the starting state, $r>0$ the discount rate, $f_i$  the running cost, $\phi_i$  the cost of control  for player $i$, and $\psi_{i,j}$  the cost for player $i$ incurred from player $j$'s control, subject to appropriate conditions to be specified in Section \ref{2-player}.
 The  goal of each player $i$ is to find the best policy to minimize her cost among a set of admissible game strategies, to be defined in Section \ref{sec:DefGames}.  

 \paragraph{Our work.}

Motivated by the above game example, this paper analyzes a class of stochastic games with impulse controls, for both $N$ players and its corresponding MFGs. For the $N$-player game, it establishes a general form of Quasi-Variational Inequalities (QVIs) and  provides the sufficient conditions for the Nash equilibria (NEs) of the game, via the verification theorem approach. For the corresponding MFG, it presents sufficient conditions for the existence of NEs and shows that the solution of the MFG is an $\epsilon$-NE approximation to the $N$-player game, with $\epsilon=\frac{1}{\sqrt{N}}$. Through sensitivity analysis and comparisons among $N=1, 2$ and $N=\infty$ (i.e., MFG), it analyzes the cash management game problem and the effect of competition in games and the collapse of MFG to the single-player game. In particular, it shows that in a game setting players have to take the opponents' strategies into consideration due to competition. Consequently, it is optimal (in the NE sense) that players choose to intervene less frequently; but once set to intervene, players will exert larger amount of controls. In some sense, competition induces more efficient control strategies from players.

Compared to games with regular controls and singular controls, impulse control is a more natural mathematical framework for applied problems allowing for a discontinuous state space. See  the examples of  cash management \cite{Constantinides1978}, inventory controls \cite{Harrison1983, HT1983, Sulem1986}, transaction cost in portfolio analysis \cite{Eastham1988, Morton1995, Korn1998, Korn1999, Bielecki2000, ksendal2002}, insurance model \cite{Jeanblanc-Picque1995, Candenillas2006}, liquidity risk \cite{LyVath2007}, exchange rates \cite{Mundaca1998, Bertola2016},  and real options \cite{Triantis1990, Mauer1994,BensoussanRoignant}. The presence of discontinuity makes the analysis of impulse control problems hard and even harder for stochastic games. From a PDEs perspective, the corresponding Hamilton-Jacobi-Bellman equation system is coupled with an additional non-local operator for which most PDEs techniques are not applicable. Indeed, there had been little progress in the theory of impulse controls after Bensoussan and Lions' classical work \cite{Bensoussan1982}, until the work of \cite{Guo2010} where
the non-local operator was found to be connected with the infinitesimal differential operator in the nonlinear PDEs via the payoffs in the action region and the waiting region. (See also  \cite{Bayraktar}.) 

\paragraph{Related work on impulse control games.}
Despite the rapid growth in recent literature on stochastic games and MFGs, few papers have studied the case of stochastic games with impulse controls.  
\cite{Cosso12,Azimzadeh} focus on zero-sum impulse games, \cite{CampiDeSantis} studies mixed impulse-stopping games, \cite{FerrariKoch} analyzes nonzero-sum stochastic games involving impulse controls.
The closest to our work is \cite{Ad2018}, which  studies a class of nonzero-sum {\it two-player} impulse control games. In the example proposed, players control the same {\it one-dimensional} diffusion process.  Interestingly,  their payoffs could be derived without having to deal directly with the non-local operator.  
 
 In comparison, our paper considers a general class of 
 {\it multi-dimensional $N$-player} impulse control games and their MFGs.
 The interaction among players in a discontinuous fashion requires a general mathematical framework, starting from the very definition of game formulation, to admissible game strategies, to the appropriate choices of filtrations, as detailed in Section \ref{sec:DefGames}. The structure of the game payoff also changes when two players do not necessarily control the same dynamics even for the special $N=2$ case: First, one needs to introduce the notion of common waiting region for analyzing general $N$-player games; secondly, one can no longer avoid dealing with the tricky non-local operator in the QVIs for impulse controls. Moreover, 
 \cite{Ad2018} does not consider the MFG of impulse controls and the relation between the MFG and 
 its $N$-player counterpart, nor did they study the impact of game competition.

\paragraph{Related work comparing $N$-player game and MFG.} For an introduction to MFG, we refer to the seminal works \cite{HuangMalhCaines,LasryLions} and to the recent books \cite{CarmonaDelarue1,CarmonaDelarue2}. In the regular control setting where the controls are absolutely continuous with respect to the Lebesgue measure, \cite{Carmona2015} solves the $N$-player game of systemic risk with no common noise and studies the MFG counterpart with common noise; \cite{Bensoussan2016} focuses on a class of linear-quadratic problems; \cite{Lacker2018} analyzes both the $N$-player and MFG  of optimal portfolio problem under the competition criterion, as well as the formulations of both types of games with relative criterion. In the singular control setting where controls are c\`adl\`ag type, \cite{Guo2018} explicitly solves the $N$-player game and MFG of a classical fuel followers problem. The $\epsilon$-NE approximation of MFGs was first established for regular controls in \cite{CarmonaDelarue1,CarmonaDelarue2} with $\epsilon=\frac{1}{N}$ and then for singular controls in \cite{Guo2018} and \cite{GuoJoon2018} with $\epsilon=\frac{1}{\sqrt{N}}$. Our result with $\epsilon=\frac{1}{\sqrt{N}}$ for impulse controls  is consistent with those for singular controls as both allow discontinuity in the state space.   

\section{$N$-Player Stochastic Games with Impulse Controls}
\label{sec:DefGames}

\subsection{Problem Formulation}

In this section, we provide the  mathematical definition for the $N$-player stochastic games with impulse controls. The idea is clear and intuitive: $N$ players intervening on a stochastic process by discrete-time intervention. However, the precise mathematical definition presents some non-trivial technicalities with the presence of discontinuous multi-dimensional controlled process.

\paragraph{Domain and underlying process.} Let ($\Omega$, $\mathcal{F}$, $\{\mathcal{F}_t\}_{t\geq 0}$, $\mathbb{P}$) be a filtered probability space and let $\{W_t\}_{t \geq 0}$ be an $M$-dimensional Brownian motion with natural filtration $\{\mathcal{F}_t\}_{t \geq 0}$. 
Let $S$ be a fixed non-empty subset of $\rr^d$, representing the set where the game takes place, in the sense that the game ends when the controlled process exits from $S$. For example, in portfolio optimization problems the game ends in case of bankruptcy, which may be modelled by choosing $S=(0,\infty)$.

For $t \geq 0$ and $\zeta \in L^2 (\mathcal F_t)$, we denote by $Y^{t,\zeta}=\{Y^{t,\zeta}_s\}_{s \geq t}$ a solution to the stochastic differential equation
\begin{equation}
\label{SDE}
\begin{cases}
d Y^{t,\zeta}_s = b(Y ^{t,\zeta}_s) ds + \sigma(Y ^{t,\zeta}_s) dW_s, \quad s \geq t,  \\
Y^{t,\zeta}_t=\zeta.
\end{cases}
\end{equation}
Here, $b: S \to \rr^d$ and $\sigma: S \to \rr^{d \times M}$ are given Lipschitz-continuous functions, i.e., there exists a constant $K>0$ such that for all $y_1,y_2 \in S$,
\begin{equation*}
|b(y_1) - b(y_2)| + |\sigma(y_1)-\sigma(y_2)| \le K|y_1-y_2|.
\end{equation*}
The equation in \eqref{SDE} models the underlying process when none of the players intervenes. {Since we are going to stop the process as soon as it exits from $S$, in our framework it is enough to have the functions $b$ and $\sigma$ defined only on $S$.}

\paragraph{Interventions of the players and impulse controls.} $N$ players, indexed by $i \in \{1,\dots, N\}$, can intervene on the process in \eqref{SDE} by means of discrete-time interventions. Namely, if player $i$ intervenes with impulse $\delta \in Z_i$, where $Z_i$ is a fixed subset of $\rr^{l_i}$, the process is shifted from state $x$ to state $\Gamma^i(x, \delta)$, where $\Gamma^i : S \times Z_i \to S$ is a given function. In most applied settings, the process shifts with a simple translation, i.e., $\Gamma^i(x, \delta) = x+\delta$. 

The interventions of player $i$ are described by the sequence $\{ (\tau_{i,k}, \xi_{i,k}) \}_{k\geq 1}$ (impulse control), where $\{ \tau_{i,k} \}_{k \geq 1}$ represent the intervention times and $\{ \xi_{i,k} \}_{k \geq 1}$ the corresponding amount of adjustment. {Mathematically, $\tau_{i,k}$ is a stopping time with respect to a suitable filtration $\{\widetilde {\mathcal{F}}_t\}_{t \geq 0}$ (see Remark \ref{RemFiltration} below for details), with $\tau_{i,k+1}\geq\tau_{i,k}$, and $\xi_{i,k}$ is a $\widetilde {\mathcal{F}}_{\tau_{i,k}}$-measurable variable, for each $k \geq 1$ and $i \in \{1,\dots,N\}$}. 

Intervening has a cost or a gain, both for the acting player and for all her opponents. 
Namely, if $x$ is the current state and player $i$ intervenes with an impulse $\delta$, her cost is $\phi_i(x,\delta)$, whereas the cost for player $j \neq i$ is $\psi_{j,i}(x,\delta)$, for given functions $\phi_i,\psi_{j,i}: S \times Z_i \to \rr$. 
For the game to be well defined,  it is necessary to have $\phi_i>0$. That is, intervening corresponds to a cost, otherwise the game degenerates and the players could improve their payoff by continuously intervening. 

\paragraph{Action regions, impulse functions, strategies.}  As seen, players' interventions on the underlying process are modelled by impulse controls.  In the model we propose here, impulse controls originate from a precise strategy that each player preliminarily fixes. 
 
\begin{defn}\label{defPhi}
	A strategy for player $i \in \{1,\dots, N\}$ is a pair $\vphi_i =(A_i,\xi_i)$, where $A_i$ is a fixed {closed} subset of $\rr^d$ (action region) and $\xi_i: S \to Z_i$ is a continuous function (impulse function). We denote by $\Phi_i$ the set of strategies for player $i$. 
\end{defn}

Strategies determine the behaviour of the players, as follows. Fix a starting point $x \in S$ and an $N$-tuple of strategies $\vphi= (\vphi_1, \dots, \vphi_N)$, where $\vphi_i=(A_i,\xi_i) \in \Phi_i$ is the strategy of player $i$ and the sets $A_i$ are pairwise disjoint, that is, $A_i\cap A_j=\emptyset$ for $i\neq j$. Then, $N$ impulse controls $\{ (\tau_{i,k}^{x;\vphi}, \xi^{x;\vphi}_{i,k}) \}_{k \geq 1}$ (the players' interventions), a right-continuous process $X^{x;\vphi}$ (the controlled process), a stopping time $\tau_S^{x;\vphi}$ (the end of the game) are uniquely defined by the following two rules.
\begin{enumerate}
	\item Player $i$ intervenes if and only if the process enters the set $A_i$, in which case the impulse is given by $\xi_i(y)$, where $y$ is the current state. Recall that choosing $\xi_i(y)$ as the intervention impulse means that player $i$ shifts the process from state $y$ to state $\Gamma^i(y,\xi_i(y))$, as introduced earlier.
	\item The game ends when the process exits from $S$.
\end{enumerate}
More precisely,  $\{ (\tau_{i,k}^{x;\vphi}, \xi^{x;\vphi}_{i,k}) \}_{k \geq 1}$, $X^{x;\vphi}$, $\tau_S^{x;\vphi}$ are defined in the following Definition \ref{controls}, where we use the conventions $\inf \emptyset = \infty$ and $[\infty,\infty)=\emptyset$.

\begin{defn}
	\label{controls}
	Let $x \in S$ and $\vphi =(\vphi_1,\dots,\vphi_N)$, where $\vphi_i =(A_i,\xi_i) \in \Phi_i$ is a strategy for player $i\in \{1,\dots,N\}$. Assume that $A_i\cap A_j=\emptyset$, for $i\neq j$. For $k \in \{0, \dots, \bar k\}$, where $\bar k = \sup \{k \in \nn \cup \{0\} : \widetilde \tau_k < \alpha^S_k \}$, we define by induction $\widetilde\tau_0=0$, $x_0=x$, $\widetilde X^0 = Y^{ \widetilde\tau_{0}, x_{0}}$, $\alpha_0^S =\infty$, and
	\begin{align*}
	& \alpha^O_k = \inf \{ s >  \widetilde\tau_{k-1} : \widetilde X^{k-1}_s \notin O \},  && \text{{\footnotesize[exit time from $O \subseteq S$]}} \\	
	&  \widetilde\tau_k = \min \{ \alpha^{A_1}_k, \dots, \alpha^{A_N}_k \},  && \text{{\footnotesize[intervention time]}} \\
	& m_k = \mathbbm{1}_{\{\widetilde\tau_k=\alpha^{A_1}_k\}} + \dots + N \mathbbm{1}_{\{\widetilde\tau_k=\alpha^{A_N}_k\}}, && \text{{\footnotesize[index of the player interv.~at $ \widetilde \tau_k$]}} \\
	&  \widetilde\xi_k = \xi_{m_k} \big( \widetilde X^{k-1}_{ \widetilde \tau_k} \big), && \text{{\footnotesize[impulse]}} \\
	& x_k=\Gamma^{m_k} \big( \widetilde X^{k-1}_{ \widetilde\tau_k},  \widetilde\xi_k \big), && \text{{\footnotesize[starting point for the next step]}} \\
	& \widetilde X^k= \widetilde X^{k-1} \mathbbm{1}_{[0,  \widetilde \tau_k[} +  Y^{ \widetilde\tau_{k},x_{k}} \mathbbm{1}_{[ \widetilde \tau_k, \infty[}. && \text{{\footnotesize[contr.~process up to the $k$-th interv.]}}
	\end{align*} 
	Let $\bar k_i$ be the number of interventions by player $i \in \{1,\dots,N\}$ before the end of the game, and, in the case where $\bar k_i \neq 0$, let $\eta(i,k)$ be the index of her $k$-th intervention ($1 \leq k \leq \bar k_i$):
	\begin{gather*}
	\bar k_i = {\displaystyle\sum}_{1 \leq h \leq \bar k} \mathbbm{1}_{\{m_h=i\}},
	\qquad\quad
	\eta(i,k) = \min \Big\{ l \in \nn : {\displaystyle\sum}_{1 \leq h \leq l} \mathbbm{1}_{\{m_h=i\}} = k \Big\}.
	\end{gather*}
	Assume now that the times $\{\tilde \tau_k\}_{0\leq k \leq \bar k}$ never accumulate strictly before $\alpha^S_{\bar k}$. That is, we assume that  $\lim_{k \to \bar k}\tilde \tau_{k} = \alpha^S_{\bar k}$ in the event $\{\bar k=+\infty\}$,  with the convention $\alpha^S_\infty= \sup_k \alpha^S_k$. The controlled process $X^{x;\vphi}$ and the exit time $\tau^{x;\vphi}_S$ are defined by 
	\begin{equation*}
	X^{x;\vphi} := \widetilde X^{\bar k}, 
	\qquad\qquad
	\tau_S^{x;\vphi} := \alpha^S_{\bar k} = \inf \{ s \geq 0 : X^{x;\vphi}_s \notin S \},
	\end{equation*}
	with the convention $\widetilde X^{\infty} = \lim_{k \to +\infty}\widetilde X^{k}$. Finally, the impulse controls $\{(\tau^{x;\vphi}_{i,k}, \xi^{x;\vphi}_{i,k})\}_{k \geq 1}$, with $i \in \{1, \dots, N \}$, are defined by
	\begin{equation}
	\label{times}
	\tau^{x;\vphi}_{i,k} :=  
	\begin{cases}
	\widetilde \tau_{\eta(i,k)}, & k \leq \bar k_i,
	\\
	\tau^{x;\vphi}_S, & k > \bar k_i,
	\end{cases} 
	\qquad\qquad
	\xi^{x;\vphi}_{i,k} :=  
	\begin{cases}
	\widetilde \xi_{\eta(i,k)}, & k \leq \bar k_i,
	\\
	0, & k > \bar k_i.
	\end{cases}  
	\end{equation}
\end{defn} 

Notice that, if player $i$ intervenes a finite number of times, i.e., $\bar k_i=\bar k_i(\omega)$ is finite, then the tail of the control is conventionally set to $(\tau_{i,k}, \xi_{i,k}) = (\tau_S, 0)$ for $k > \bar k_i$. 
The following lemma characterizes precisely the controlled process $X^{x;\vphi}$.  
\begin{lemma}
	\label{lemmaprocess}
	Let $x \in S$ and $\vphi =(\vphi_1,\dots,\vphi_N)$, where $\vphi_i =(A_i,\xi_i) \in \Phi_i$ is a strategy for player $i\in \{1,\dots,N\}$. Let $X=X^{x;\vphi}$, $\tau_S =\tau^{x;\vphi}_S$, $\tau_{i,k}=\tau^{x;\vphi}_{i,k}$, $\xi_{i,k}=\xi^{x;\vphi}_{i,k}$ be as in Definition \ref{controls}, for $i\in \{1,\dots,N\}$ and $k \geq 1$. Then, 
	\begin{itemize}
		\item[-] $X$ admits the following representation, with $\widetilde \tau_k$, $x_k$ as in Definition \ref{controls} and $Y$ as in \eqref{SDE}:
		\begin{equation}
		\label{representation}
		X_s = \sum_{k=0}^{\bar k -1} Y^{ \widetilde \tau_k,x_k}_s \mathbbm{1}_{ [ \widetilde \tau_k, \widetilde \tau_{k+1}[}(s) + Y^{ \widetilde \tau_{\bar k},x_{\bar k}}_s \mathbbm{1}_{ [ \widetilde \tau_{\bar k}, \infty[ }(s).
		\end{equation}	
		\item[-] $X$ is right-continuous. More precisely, $X$ is continuous in $[0,\infty) \, \setminus \, \{\tau_{i,k} : \tau_{i,k} < \tau_S \}$ and discontinuous in $\{\tau_{i,k} : \tau_{i,k} < \tau_S \}$, where 
		\begin{equation}
		\label{prop1}
		X_{\tau_{i,k}} = \Gamma^i \big(X_{(\tau_{i,k})^-}, \xi_{i,k} \big),
		\qquad
		\xi_{i,k} = \xi_i \big(X_{(\tau_{i,k})^-} \big),
		\qquad
		X_{(\tau_{i,k})^-} \in \partial A_i.
		\end{equation}
		\item[-] $X$ never exits from the set $(A_1 \cup \dots \cup A_N)^c$.
	\end{itemize}
\end{lemma}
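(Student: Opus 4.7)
The plan is to establish each of the three assertions by unfolding the inductive construction in Definition \ref{controls}.

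For the representation formula, I would prove by induction on $k$ the identity
\[
\widetilde X^k_s = \sum_{j=0}^{k-1} Y^{\widetilde \tau_j, x_j}_s \mathbbm{1}_{[\widetilde\tau_j, \widetilde\tau_{j+1})}(s) + Y^{\widetilde\tau_k, x_k}_s \mathbbm{1}_{[\widetilde\tau_k, \infty)}(s).
\]
The base case $k=0$ is immediate from the initialization $\widetilde X^0 = Y^{\widetilde\tau_0, x_0}$. For the inductive step, I would substitute the hypothesis into the recursion $\widetilde X^k = \widetilde X^{k-1} \mathbbm{1}_{[0, \widetilde\tau_k)} + Y^{\widetilde\tau_k, x_k} \mathbbm{1}_{[\widetilde\tau_k, \infty)}$: the factor $\mathbbm{1}_{[0, \widetilde\tau_k)}$ truncates the trailing piece of $\widetilde X^{k-1}$ from $[\widetilde\tau_{k-1}, \infty)$ down to $[\widetilde\tau_{k-1}, \widetilde\tau_k)$, producing exactly the desired shape. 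Taking $X = \widetilde X^{\bar k}$, with the convention $\widetilde X^{\infty} = \lim_{k \to \infty} \widetilde X^k$ on the event $\{\bar k = \infty\}$, gives \eqref{representation}.

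Right-continuity and the jump formulas will then follow directly from the representation. Each $Y^{\widetilde\tau_j, x_j}$ admits a continuous version by the Lipschitz assumption on $b$ and $\sigma$ together with standard SDE theory, so $X$ is continuous on every interval $[\widetilde\tau_j, \widetilde\tau_{j+1})$ and in particular right-continuous at every $\widetilde\tau_j$. At a jump time $\widetilde\tau_k = \tau_{i,h}$, corresponding to $m_k = i$ and $\eta(i,h) = k$, continuity of $Y^{\widetilde\tau_{k-1}, x_{k-1}}$ yields $X_{(\tau_{i,h})^-} = Y^{\widetilde\tau_{k-1}, x_{k-1}}_{\widetilde\tau_k} = \widetilde X^{k-1}_{\widetilde\tau_k}$, while by the representation $X_{\tau_{i,h}} = Y^{\widetilde\tau_k, x_k}_{\widetilde\tau_k} = x_k$. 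The two identities in \eqref{prop1} involving $\Gamma^i$ and $\xi_i$ can now be read off directly from the defining equations $x_k = \Gamma^{m_k}(\widetilde X^{k-1}_{\widetilde\tau_k}, \widetilde\xi_k)$ and $\widetilde\xi_k = \xi_{m_k}(\widetilde X^{k-1}_{\widetilde\tau_k})$. The boundary statement $X_{(\tau_{i,h})^-} \in \partial A_i$ comes from the fact that $\widetilde\tau_k = \alpha^{A_i}_k$ is a first-hitting time of the closed set $A_i$ by the continuous process $\widetilde X^{k-1}$.

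Finally, the claim that $X$ never leaves $(A_1 \cup \dots \cup A_N)^c$ splits into two observations. On each open interval $(\widetilde\tau_k, \widetilde\tau_{k+1})$, the minimality of $\widetilde\tau_{k+1}$ as the smallest of the hitting times $\alpha^{A_j}_{k+1}$ keeps $X$ out of every $A_j$; at each jump time, the post-jump state $x_k$ must itself lie outside $\cup_j A_j$, since otherwise we would have $\widetilde\tau_{k+1} = \widetilde\tau_k$, contradicting the standing non-accumulation assumption $\lim_{k \to \bar k} \widetilde\tau_k = \alpha^S_{\bar k}$. The only step above that is not a routine bookkeeping exercise is the boundary identification $X_{(\tau_{i,h})^-} \in \partial A_i$, where one must combine closedness of $A_i$ and path-continuity to exclude the limit sitting strictly in the interior of $A_i$; everything else is a direct consequence of the inductive definition once the representation \eqref{representation} is in hand.
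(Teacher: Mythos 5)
Your treatment of the representation \eqref{representation}, of right-continuity, and of the two identities in \eqref{prop1} is correct and follows essentially the same route as the paper: the paper's proof consists only of the chain of equalities establishing the first identity in \eqref{prop1} (unwinding Definition \ref{controls}, using the continuity of $\widetilde X^{\sigma-1}$ on $[\widetilde\tau_{\sigma-1},\infty)$ and the fact that $\widetilde X^{\sigma-1}\equiv X$ on $[0,\widetilde\tau_\sigma)$), and declares everything else immediate; your induction for \eqref{representation} simply makes the ``immediate'' part explicit, and your derivation of $X_{\tau_{i,h}}=x_k$ and $X_{(\tau_{i,h})^-}=\widetilde X^{k-1}_{\widetilde\tau_k}$ is the paper's computation in slightly different clothing.

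The one step that does not hold as written is your justification of the third bullet. You argue that each post-jump state $x_k$ lies outside $\cup_j A_j$ because otherwise $\widetilde\tau_{k+1}=\widetilde\tau_k$, ``contradicting the non-accumulation assumption.'' But $\widetilde\tau_{k+1}=\widetilde\tau_k$ is not a contradiction: the standing assumption in Definition \ref{controls} (and condition \eqref{LimImp}) only forbids infinitely many intervention times accumulating strictly before $\alpha^S_{\bar k}$, and Remark \ref{rmk:disjoint2} explicitly permits several players to intervene one right after another at the same instant --- which is precisely the case $\widetilde\tau_{k+1}=\widetilde\tau_k$ with $x_k\in A_{m_{k+1}}$. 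So intermediate post-jump states may well lie in an action region, and the claim ``$x_k\notin\cup_jA_j$ for every $k$'' is false in general. The correct observation concerns the terminal process $X=\widetilde X^{\bar k}$: at a time $t$ at which interventions occur, $X_t=x_{k'}$ where $k'$ is the \emph{largest} index with $\widetilde\tau_{k'}=t$, since the earlier values $x_k$, $k<k'$, are overwritten by the recursion $\widetilde X^{k+1}=\widetilde X^{k}\mathbbm{1}_{[0,\widetilde\tau_{k+1})}+Y^{\widetilde\tau_{k+1},x_{k+1}}\mathbbm{1}_{[\widetilde\tau_{k+1},\infty)}$ and are never attained by $X$. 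It is then the maximality of $k'$, combined with the continuity of $Y^{\widetilde\tau_{k'},x_{k'}}$, that keeps $x_{k'}$ out of the action regions: were $x_{k'}$ in the interior of some $A_j$, the process would remain in $A_j$ on a right-neighbourhood of $\widetilde\tau_{k'}$, forcing $\widetilde\tau_{k'+1}=\widetilde\tau_{k'}$ and contradicting maximality. This is a local repair rather than a change of strategy, but as written your argument invokes the wrong hypothesis and proves a statement that the model explicitly does not guarantee.
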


\begin{proof}
	We just prove the first property in \eqref{prop1}, the other ones being immediate. Let $i \in \{1,\dots,N\}$, $k \geq 1$ with $\tau_{i,k}< \tau_S$ and set $\sigma = \eta(i,k)$, with $\eta$ as in Definition \ref{controls}. By \eqref{times}, \eqref{representation} and Definition \ref{controls}, we have
	\begin{multline*}
	X_{\tau_{i,k}} 
	= X_{ \widetilde \tau_{\sigma}} 
	= Y ^{ \widetilde\tau_\sigma, x_\sigma}_{ \widetilde\tau_\sigma} = x_{\sigma}
	= \Gamma^i \big( \widetilde X^{\sigma-1}_{  \widetilde\tau_{\sigma} } ,  \widetilde\xi_\sigma \big) 
	\\
	= \Gamma^i \big( \widetilde X^{\sigma-1}_{  ( \widetilde\tau_{\sigma})^- } ,  \widetilde\xi_\sigma \big) 
	= \Gamma^i \big( X_{  ( \widetilde\tau_{\sigma})^- } ,  \widetilde\xi_\sigma \big)
	= \Gamma^i \big( X_{  (\tau_{i,k})^- } , \xi_{i,k} \big),
	\end{multline*}
	where the fifth equality is by the continuity of the process $\widetilde X ^{\sigma-1}$ in $[ \widetilde \tau_{\sigma-1}, \infty)$ and the next-to-last equality follows from $\widetilde X ^{\sigma-1} \equiv X$ in $[0,  \widetilde\tau_{\sigma})$. 
\end{proof}

\begin{remark}
\label{RemFiltration}
For $x \in S$ and $\vphi \in \Phi_x$, let $\{\mathcal{F}^{X}_{t}\}_{t \geq 0}$ denote the natural filtration of the process $X=X^{x;\vphi}$. Then, by construction, $\tau_{i,k}$ is a stopping time with respect to the filtration $\{\mathcal{F}^{X}_{t^-}\}_{t \geq 0}$ and $\xi_{i,k}$ is a $\mathcal{F}^{X}_{\tau_{i,k}}$-measurable random variable, for $i \in \{1,\dots,N\}$ and $k \in \nn$. 
\end{remark}

\begin{remark}
In single-player impulse control problems (e.g., \cite{ksendal2002}), the optimal intervention times are recursively defined by 
\begin{equation}
\label{simple}
\tau_{k+1}= \inf \{s \geq \tau_{k} : X_t^{k} \in A\},
\end{equation}
for a suitable set $A$, where $X^{k}$ represents the controlled process after the $k$-th intervention. Notice that this procedure cannot be directly extended to $N$-player impulse games: In a game setting, the intervention times of player $i$ also depend on her opponents' past interventions, so that \eqref{simple} would not be well defined in this case. To overcome this technical difficulty and provide a rigorous framework, {we have introduced} the definition of strategy.
\end{remark}

\paragraph{Objective functions.} Each player aims at minimizing her objective function, made up of four terms: a continuous-time running cost in $[0,\tau_S]$, the discrete-time costs associated to her own interventions, the discrete-time costs associated to her opponents' interventions, a terminal cost if the game ends. 

More precisely, let $f_i, h_i: S \to \rr^d$ be given functions, and let $\rho_i>0$ be strictly positive constants, for $i \in \{1,\dots,N\}$. For more technical details on the existence and uniqueness of the solution to impulse control problems, see \cite{Guo2010}. The functional that player $i$ aims at minimizing is defined as follows. 

\begin{defn}
	Let $x \in S$ and $ \vphi = (\vphi_1,\dots,\vphi_N)$ be a $N$-tuple of strategies. For $i\in \{1,\dots,N\}$, provided that the right-hand side exists and is finite, we set 
	\begin{multline}
	\label{defJ}
	J^i(x;\vphi) := 
	\eee_x \bigg[ \int_0^{\tau_S} e^{-\rho_i s} \, f_i(X_s) ds 
	+ \sum_{\substack{k \in \nn \\ \tau_{i,k} < \tau_S}} e^{-\rho_i \tau_{i,k}} \, \phi_{i} \big( X_{(\tau_{i,k})^-}, \xi_{i,k} \big)
	\\
	+ \sum_{\substack{1 \leq j \leq N \\ j \neq i}} \sum_{\substack{k \in \nn \\ \tau_{i,k} < \tau_S}} e^{-\rho_i \tau_{j,k}} \, \psi_{i,j} \big( X_{(\tau_{j,k})^-}, \xi_{j,k} \big)
	+ e^{-\rho_i \tau_S} \, h_i (X_{\tau_S})\mathbbm{1}_{ \{ \tau_S < +\infty \} } \bigg],
	\end{multline}
	with $X = X^{x;\vphi}$, $\tau_S = \tau^{x;\vphi}_S$, $\{(\tau_{i,k}, \xi_{i,k})\}_{k \geq 1} = \{(\tau_{i,k}^{x;\vphi},\xi^{x;\vphi}_{i,k})\}_{k \geq 1}$ as in Definition \ref{controls}.
\end{defn}

The subscript in the expectation denotes, as in control theory, conditioning with respect to starting point $X_t^{x;\vphi}=x$. To shorten the notations, we will often omit the initial state  and write $\eee$. Also, notice that in the summations we only consider times strictly smaller than $\tau_S$: indeed, since the game ends in $(\tau_S)^-$, interventions in the form $\tau_{i,k}=\tau_S$ are meaningless for the game. 

\paragraph{Admissible strategies and Nash equilibria.} Before defining a Nash equilibrium (NE)  for the game, we define, for each starting point $x \in S$, the set $\Phi_x$ of admissible strategies, i.e., strategies as in Definition \ref{defPhi} with additional properties assuring that the game is well defined.
\begin{defn}
	\label{AdmStrat}
	Let $x \in S$ and $\vphi_i = (A_i,\xi_i)$ be a strategy for player $i \in \{1, \dots, N\}$. We say that the $N$-tuple $\vphi=(\vphi_1, \dots, \vphi_N)$ is $x$-admissible, written as $\vphi \in \Phi_x$, if: 
	\begin{enumerate}
		\item the sets $A_1,\dots,A_N$ are pairwise disjoint, that is, $A_i \cap A_j = \emptyset$ for $i \neq j$;
		\item for $i,j \in \{1,\dots,N\}$ with $i \neq j$, the following random variables are in $L^1(\Omega)$:
		\begin{equation}
		\label{L1}
		\begin{gathered}
		\int_0^{\tau_S} e^{-\rho_i s} |f_i|(X_s)ds, 
		\qquad
		e^{-\rho_i \tau_S}|h_i|(X_{\tau_S}),
		\\
		\sum_{ \tau_{i,k} < \tau_S } e^{-\rho_i \tau_{i,k}} |\phi_{i}| ( X_{(\tau_{i,k})^-}, \xi_{i,k} ),
		\qquad
		\sum_{ \tau_{i,k} < \tau_S } e^{-\rho_i \tau_{i,k}} |\psi_{i,j}| ( X_{(\tau_{j,k})^-}, \xi_{j,k} );
		\end{gathered}
		\end{equation}
		\item for each $i \in \{1,\dots,N\}$ and $p \in \nn$, the random variable $\|X\|_\infty = \sup_{t\geq0}|X_t|$ is in $L^p(\Omega)$:
		\begin{equation}
		\label{L1process}
		\eee[\|X\|^p_\infty] < \infty;
		\end{equation}
		\item for $i \in \{1,\dots, N\}$, we have
		\begin{equation}
		\label{LimImp}
		\lim_{k \rightarrow + \infty} \tau_{i,k} = \tau_S.
		\end{equation}
	\end{enumerate}
\end{defn}

The first condition in Definition \ref{AdmStrat} \textcolor{black}{ensures that there are no conflicts due to two or more players willing to intervene at same time (see Remarks \ref{rmk:disjoint} and \ref{rmk:disjoint2} below for further comments on this condition).} The second condition assures that the functionals $J^i(x;\vphi)$ in \eqref{defJ} are well-defined, for each $i \in \{1,\dots,N\}$. The third condition will be used in the proof of the verification theorem where sufficient conditions for the NEs are specified. Finally, the fourth condition prevents the players from accumulating the interventions before the end of the game.

We now provide the definition of NE and payoffs. Given a tuple of strategies $\vphi= (\vphi_1, \dots, \vphi_N)$, an index $i \in \{1,\dots,N\}$ and a strategy $\bar\vphi \in \Phi_i$, we denote by $(\vphi^{-i},\bar\vphi)$ the $N$-tuple we get when substituting the $i$-th component of $\vphi$ by $\bar\vphi$, that is 
\begin{equation*}
{(\vphi^{-i},\bar\vphi) := (\vphi_1, \dots, \vphi_{i-1}, \bar\vphi, \vphi_{i+1}, \dots, \vphi_N).}
\end{equation*}
\begin{defn}
	\label{DefNash}
	Given $x \in S$, we say that the admissible $N$-tuple of strategies $\vphi^* \in \Phi_x$ is a NE of the game if
	\begin{equation*}
	J^i(x;\vphi^*) \leq J^i(x;(\vphi^{*,-i},\vphi_i)),  
	\end{equation*}
	for each $i \in \{1,\dots,N\}$ and each $\vphi_i \in \Phi_i$ such that $(\vphi^{*,-i},\vphi_i) \in \Phi_x$. Finally, if $x \in S$ and $\vphi^* \in \Phi_x$ is a NE, then the payoff associated with the equilibrium $\vphi^*$ for player $i \in \{1,\dots,N\}$ is
	\begin{equation*}
	V_i(x) := J^i(x;\vphi^*).
	\end{equation*}	
\end{defn}

\begin{remark}\label{rmk:disjoint}
{If the action regions are not pairwise disjoint (i.e., two or more players would like to intervene at the same time), one sets specific rules deciding which player has the priority. For example, player $i$ may have priority over player $j$ whenever $i>j$; otherwise, priority may be given to the player who is the farthest away from the state he would shift the process to.
	
\textcolor{black}{Several formulation are possible to handle priorities among players.	
Our formulation is based on the idea that,} since priority rules practically partition the conflict regions, it is not restrictive to assume that the action regions are pairwise disjoint, as we now detail.

Let $\tilde A_1, \dots ,\tilde A_N$ denote the action regions before any priority rules is set, so that the sets $\tilde A_i$ are possibly non-disjoint. Let $C$ denote the region where two or more players would like to simultaneously intervene, $C:= \cup_{i \neq j} (\tilde A_i \cap \tilde A_j)$. Deciding which player has the priority corresponds to choosing a partition $C_1,\dots,  C_N$ of $C$, with the additional property that $C_i \subseteq \tilde A_i$: namely, if a point belongs to the conflict region, $x \in C$, then it is player $i$ who intervenes, where $i$ is the only index such that $x \in C_i$. The actual action regions $A_i$ are then defined by $A_i = (\tilde A_i \setminus C) \cup C_i$, clearly pairwise disjoint. Hence, whatever the priority rule is, we get a $N$-uple of pairwise disjoint action regions, so that condition 
$1$ in Definition \ref{AdmStrat} is not restrictive.

The advantage of this formulation is twofold. On  one hand, no specific priority rules are embedded in the model, which provides more flexibility with respect to, e.g., the approach in \cite{Ad2018} \textcolor{black}{(if $N=2$, choosing $A_1 = \tilde A_1$ and $A_2 = \tilde A_2 \setminus \tilde A_1$ in our setting retrieves the priority rules in \cite{Ad2018})}. On the other hand, this helps relieving the notational burden in the paper, since we do not have to deal with multiple intersections of the action regions when defining the controlled process. 
}
\end{remark} 

\begin{remark}\label{rmk:disjoint2}
\textcolor{black}{We remark that the present setting allows two or more players to intervene, one right after the other, at a same instant $t \geq 0$. For example, if the present state is $X_{t-}=x \in A_{i_1}$, then player $i_1$ intervenes and move it to $x'$. If $x'$ happens to be in $A_{i_2}$, for some $i_2$, then player $i_2$ immediately intervenes, moving again the state to $x''$. Overall, the state jumped from $X_{t-}= x$ to $X_{t}= x''$. In order to have a well-defined process, only finitely many players can intervene in $t$, which is guaranteed by condition \eqref{LimImp} above. }
\end{remark} 

\subsection{Verification Theorem}

In this section we establish a verification theorem for the games defined in Section \ref{sec:DefGames}, providing sufficient conditions to determine the payoffs and an NE. This verification theorem  links  the impulse games with a suitable system of quasi-variational inequalities (QVI). Note that a special case of this verification theorem for $N=2$ was presented in \cite{Ad2018}. 

In Section \ref{ssec:Qvi} we heuristically introduce the system of QVIs, providing the intuition behind each equation involved. These arguments are made rigorous in Section \ref{ssec:Verif1}, with the precise statement and proof of the verification theorem.

\subsubsection{The Quasi-Variational Inequalities}
\label{ssec:Qvi}

We start by heuristically guessing an expression for a NE $\vphi^*=(\vphi^*_1,\dots,\vphi^*_N)$ and for the corresponding payoffs $V_i$ of the game.

Consider a game as in Section \ref{sec:DefGames}. Assume for a moment that the payoffs $V_i$, $i \in\{1,\dots,N\}$ are known. Moreover, assume that for every $i$ there exists a (unique) function $\xi_i:S \to Z_i$ such that 
\begin{equation}
\label{def:deltai}
\{\xi_i(x)\} = \arg\min_{\delta \in Z_i} \big\{ V_i(\Gamma^i(x,\delta))+\phi_i(x,\delta)\big\}, 
\end{equation}
for each $x \in S$. We define the intervention operators by
\begin{equation}
\label{MH}
\begin{aligned}
\mm_i V_i(x) &= V_i\big(\Gamma^i(x,\xi_i(x))\big) + \phi_i \big(x,\xi_i(x)\big), 
\\
\hh_{i,j} V_i(x) &= V_i \big(\Gamma^j(x,\xi_j(x))\big) + \psi_{i,j} \big(x,\xi_j(x)\big), 
\end{aligned}
\end{equation}
for $x \in S$ and $i,j \in \{1,\dots,N\}$, with $i \neq j$. 

The functions in \eqref{def:deltai} and \eqref{MH} are intuitive. If $x$ is the current state of the process, and player $i$ (resp.~player $j$) intervenes with impulse $\delta$, the payoff for player $i$ can be represented as $V_i(\Gamma^i(x,\delta)) + \phi_i(x,\delta)$ (resp.~$V_i(\Gamma^j(x,\delta)) + \psi_{i,j}(x,\delta)$), that is, as the sum of the intervention cost and the payoff in the new state. As a consequence, $\xi_i(x)$ in \eqref{def:deltai} is the impulse that player $i$ would use in case she decides to intervene. Similarly, $\mm_iV_i(x)$ (resp.~$\hh_{i,j}V_i(x)$) represents the payoff for player $i$ when player $i$ (resp.~player $j \neq i$) takes the best immediate action and behaves optimally afterwards. 

Notice that it is not always optimal to intervene, so $\mm_iV_i(x) \geq V_i(x)$, for each $x \in S$, and that player $i$ should intervene (with impulse $\xi_i(x)$) only if $\mm_iV_i(x) = V_i(x)$. As a consequence, provided that an explicit expression for $V_i$ is available, an NE is heuristically given by $\vphi^*=(\vphi^*_1,\dots,\vphi^*_N)$, where $\vphi_i^*=(A_i^*,\xi_i^*)$ is given, for each $i \in \{1,\dots,N\}$, by
\begin{equation*}
A^*_i = \{ \mm_iV_i - V_i = 0 \}, \qquad\quad \xi^*_i = \xi_i.
\end{equation*}
Practically, this means that player $i$ intervenes when the process enters the region $\{ \mm_iV_i - V_i = 0 \}$, i.e, when $\mm_i V_i(x) = V_i(x)$. When this happens, her impulse is $\xi_i(x)$, where $x$ is the current state. The verification theorem in the next section will give a rigorous proof to this heuristic argument. 

To complete the argument, we need to determine the payoffs $V_i$. Assume that $V_i$ are smooth enough so that we can define 
\begin{equation}
\label{operatorL}
\aaa V_i = b \cdot \nabla V_i + \frac{1}{2} \mbox{tr} \left( \sigma \sigma^t D^2 V_i\right), 
\end{equation}
where $b, \sigma$ are as in \eqref{SDE}, $\sigma^t$ denotes the transpose of $\sigma$ and $\nabla V_i, D^2 V_i$ are the gradient and the Hessian matrix of $V_i$, respectively. Then $V_i$ should satisfy the following quasi-variational inequalities (QVIs), where $i,j \in \{1, \dots, N\}$:
\begin{subequations}
	\label{pbNEW}
	\begin{empheq}[left = \empheqlbrace]{align}
	& V_i = h_i,  && \text{in} \,\,\, \partial S, \label{pbNEW-1} \\
	& \mm_jV_j - V_j \geq 0, && \text{in} \,\,\, S, \label{pbNEW-2}  \\
	& \hh_{i,j}V_i-V_i=0, && \text{in} \,\,\, {\textstyle \bigcup_{j \neq i}} \{\mm_jV_j - V_j = 0\}, \label{pbNEW-3}  \\
	& \min\big\{\aaa V_i -\rho_i V_i + f_i, \mm_iV_i-V_i \}=0, && \text{in} \,\,\, {\textstyle \bigcap_{j \neq i}} \{\mm_jV_j - V_j > 0\}  \label{pbNEW-4}  .
	\end{empheq}
\end{subequations}
Notice that there is a small abuse of notation in \eqref{pbNEW-1}, as $V_i$ is not defined in $\partial S$, so that \eqref{pbNEW-1} means $\lim_{y \to x} V_i(y) = h_i(x)$, for each $x \in \partial S$.

Intuition behind each in \eqref{pbNEW}:  the terminal condition \eqref{pbNEW-1} is obvious, and \eqref{pbNEW-2}, already stated above, is a standard condition in impulse control theory. As for \eqref{pbNEW-3}, if player $j$ intervenes (i.e.,~$\mm_jV_j - V_j =0$), by the definition on NE, we expect no losses for player $i \neq j$, that is $\hh_{i,j}V_i - V_i =0$. Meanwhile, if all the players except $i$ are not intervening (hence, $\mm_jV_j - V_j >0$ for all $j \neq i$), then player $i$ faces a classical one-player impulse control problem, so that $V_i$ satisfies the corresponding QVI of $\min\big\{\aaa V_i -\rho_i V_i + f_i, \mm_iV_i-V_i \}=0$, which is \eqref{pbNEW-4}. In short, the latter condition says that $ \aaa V_i-\rho_i V_i + f_i = 0$ when she does not intervene, whereas $\aaa V_i-\rho_i V_i + f_i \geq 0$ when she intervenes.

\begin{remark}\label{rmk:qvi}
For any player $i$, the region where she chooses not to intervene, as in $\eqref{pbNEW-4}$ when $\min\{\aaa V_i-\rho_iV_i+f_i,\mm_iV_i-V_i\}=\aaa V_i-\rho_iV_i+f_i=0$, is decided by not just player $i$ but all $N$ players; it is indeed the common non-action region $C$. On $C$, it is necessary to have $\aaa V_i-\rho_iV_i+f_i=0$ for all $i\in\{1,\dots, N\}$. The condition that $\mm_iV_i-V_i\geq 0$, however, needs an extra verifying step: it is not entirely player $i$'s decision to wait, yet this choice has to be the best one she can make at a NE. This marks the subtlety of the NE and one crucial difference between the single-player control problem and the multi-player game.
\end{remark}

\subsubsection{Statement and Proof}
\label{ssec:Verif1}

We now provide a rigorous proof of the results heuristically introduced in the previous section. Notations and assumptions from Section \ref{sec:DefGames} are adopted from now on.

\begin{thm}[\textbf{Verification Theorem}] 
	\label{thm:verification}
	Let $V_1,\dots,V_N$ be functions from $S$ to $\rr$, assume that $$\{\xi_i(x)\} = \arg\min_{\delta \in Z_i} \big\{ V_i(\Gamma^i(x,\delta))+\phi_i(x,\delta)\big\}$$ holds and set $\mathcal D_i := \{ \mm_i V_i - V_i > 0 \}$. Moreover, for $i \in \{1,\dots,N\}$ assume that:
	\begin{itemize}
		\item[(i)] $V_i$ is a solution to \eqref{pbNEW-1}-\eqref{pbNEW-4};
		\item[(ii)] $V_i \in C^2(\cap_{j \neq i} \mathcal D_j \setminus \partial \mathcal D_i) \cap C^1(\cap_{j \neq i} \mathcal D_j) \cap C(\overline{\cap_{j \neq i} \mathcal D_j})$ and it has polynomial growth; 
		\item[(iii)] $\partial \mathcal D_i $ is a Lipschitz surface, and $V_i$ has locally bounded derivatives up to the second order in some neighbourhood of $\partial \mathcal D_i$.
	\end{itemize}
	Finally, let $x \in S$ and define $\vphi^*=(\vphi_1^*,\dots,\vphi_N^*)$, with 
	\begin{equation*}
	\vphi_i^*:=(A^*_i,\xi^*_i), \qquad A^*_i:=\{ \mm_i V_i - V_i = 0 \}, \qquad \xi^*_i:=\xi_i,
	\end{equation*}
	where $i \in \{1,\dots,N\}$ and the function $\xi_i$ is as in \eqref{def:deltai}. Then, provided that $\vphi^* \in \Phi_x$, 
	\begin{equation*}
	\mbox{$\vphi^*$ is an NE and $V_i(x)=J^i(x;\vphi^*)$ for $i \in \{1,\dots,N\}$.}
	\end{equation*} 	
\end{thm}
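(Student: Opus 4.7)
Fix $x \in S$, an index $i \in \{1,\dots,N\}$, and an admissible deviation $\vphi_i \in \Phi_i$ such that $\vphi := (\vphi^{*,-i},\vphi_i) \in \Phi_x$. Writing $X = X^{x;\vphi}$, $\tau_S = \tau_S^{x;\vphi}$ and reusing the notations $\widetilde\tau_k, m_k, \widetilde\xi_k$ from Definition~\ref{controls}, the goal is to establish the master inequality $V_i(x) \le J^i(x;\vphi)$, with equality when $\vphi = \vphi^*$. The argument is a classical It\^o-based verification adapted to the piecewise-continuous dynamics of $X$, tied together by the four branches of the QVI system \eqref{pbNEW}.

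The first step is to apply It\^o's formula to $s \mapsto e^{-\rho_i s} V_i(X_s)$ on each interval $[\widetilde\tau_{k-1}, \widetilde\tau_k)$, on which $X$ solves the SDE \eqref{SDE}. Using regularity (ii)--(iii) together with a localisation/mollification near the free boundary $\partial\dd_i$ (where $V_i$ is only $C^1$) produces
\begin{equation*}
e^{-\rho_i \widetilde\tau_k} V_i(X_{(\widetilde\tau_k)^-}) - e^{-\rho_i \widetilde\tau_{k-1}} V_i(X_{\widetilde\tau_{k-1}}) = \int_{\widetilde\tau_{k-1}}^{\widetilde\tau_k} e^{-\rho_i s}\bigl(\aaa V_i - \rho_i V_i\bigr)(X_s)\,ds + \Delta M_k,
\end{equation*}
with $\Delta M_k$ a local-martingale increment. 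Since between interventions $X$ lies in $\bigcap_{j \neq i}\dd_j$ (no player $j\neq i$ intervenes there), \eqref{pbNEW-4} yields $\aaa V_i - \rho_i V_i + f_i \geq 0$, so the integrand is bounded below by $-e^{-\rho_i s} f_i(X_s)$. At each jump time $\widetilde\tau_k < \tau_S$ I would split according to $m_k$: if $m_k\neq i$, then $X_{(\widetilde\tau_k)^-} \in A^*_{m_k}$ so $\mm_{m_k}V_{m_k} = V_{m_k}$ at this point, and \eqref{pbNEW-3} gives $V_i(X_{(\widetilde\tau_k)^-}) - V_i(X_{\widetilde\tau_k}) = \psi_{i,m_k}(X_{(\widetilde\tau_k)^-},\widetilde\xi_k)$; if $m_k = i$, the minimising property of $\xi_i$ together with \eqref{pbNEW-2} provides $V_i(X_{(\widetilde\tau_k)^-}) \le V_i(X_{\widetilde\tau_k}) + \phi_i(X_{(\widetilde\tau_k)^-},\widetilde\xi_k)$, whatever impulse $\widetilde\xi_k$ the deviation $\vphi_i$ picks.

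Summing the It\^o increments and the jump bounds telescopically up to $\widetilde\tau_K$, and then taking expectations, the local-martingale terms vanish thanks to the polynomial growth of $V_i$ combined with the $L^p$-bound \eqref{L1process} and the integrability assumptions \eqref{L1}. Letting $K \to +\infty$ and invoking \eqref{LimImp} ($\widetilde\tau_K \to \tau_S$) together with the boundary behaviour \eqref{pbNEW-1} ($V_i \to h_i$ on $\partial S$), dominated convergence yields $V_i(x) \leq J^i(x;\vphi)$. For the equality with $\vphi = \vphi^*$, the choice $\xi_i^* = \xi_i$ and the identity $\mm_iV_i=V_i$ on $A^*_i$ turn the player-$i$ jump inequality into an equality, while between interventions $X$ now stays in $\bigcap_{j} \dd_j$, on which $\mm_iV_i - V_i>0$ forces, via \eqref{pbNEW-4}, $\aaa V_i - \rho_i V_i + f_i = 0$; hence $V_i(x) = J^i(x;\vphi^*)$, completing the NE verification.

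The main obstacle I anticipate is the regularity gap in (ii): It\^o's formula strictly requires $V_i \in C^2$, whereas only $C^1$-regularity is assumed across the free-boundary surface $\partial \dd_i$. Handling this rigorously needs mollifying $V_i$ in a tubular neighbourhood of $\partial \dd_i$, applying It\^o to the mollified version, and passing to the limit by exploiting the Lipschitz nature of $\partial \dd_i$ (from (iii)) together with controlled occupation of the diffusion near the boundary; the polynomial-growth hypothesis and the local-boundedness of the second derivatives near $\partial\dd_i$ in (iii) are precisely tailored so that this limit procedure is uniformly controlled in the expectations.
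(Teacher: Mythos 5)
Your proposal is correct and follows essentially the same route as the paper's proof: a localized It\^o argument for $e^{-\rho_i s}V_i(X_s)$ with a smoothing of $V_i$ near $\partial\mathcal D_i$ (the paper uses the approximation sequence from \cite[proof of Thm.~10.4.1 and App.~D]{ksendalSDE} and stopping times $\tau_{r,\ell}=\tau_r\land\tau_{1,\ell}\land\dots\land\tau_{N,\ell}$), the same three estimates — $\aaa V_i-\rho_i V_i\geq -f_i$ on $\cap_{j\neq i}\mathcal D_j$ via \eqref{pbNEW-4}, the inequality $V_i(X_{(\tau_{i,k})^-})\leq V_i(X_{\tau_{i,k}})+\phi_i$ from \eqref{pbNEW-2}, and the equality $V_i(X_{(\tau_{j,k})^-})= V_i(X_{\tau_{j,k}})+\psi_{i,j}$ from \eqref{pbNEW-3} — and the same dominated-convergence passage to $\tau_S$ using \eqref{L1}, \eqref{L1process}, \eqref{LimImp} and the polynomial growth of $V_i$. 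The only cosmetic difference is that you telescope over the inter-intervention intervals while the paper writes a single It\^o identity with explicit jump-correction sums; these are equivalent.
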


\begin{proof}
	Let $x \in S$, $i \in \{1,\dots,N\}$ and $\vphi_i \in \Phi_i$ such that $(\vphi^{*,-i},\vphi_i) \in\Phi_x$. Notice that $(\vphi^{*,-i},\vphi_i)$ corresponds to the case where all the players except player $i$ behave optimally. By Definition \ref{DefNash}, we have to prove that
	\begin{equation*}
	V_i(x)= J^i(x;\vphi^*), 
	\qquad 
	V_i(x) \leq J^i(x;(\vphi^{*,-i},\vphi_i)). 
	\end{equation*}
	
	\textit{Step 1: $V_i(x) \leq J^i(x;(\vphi^{*,-i},\vphi_i))$.} To simplify the notations, we omit the dependence on $i, x, \vphi$ and write
	\begin{equation}
	\label{shortnotations}
	X=X^{x;(\vphi^{*,-i},\vphi_i)}, \qquad\quad 
	\tau_{j,k}=\tau^{x;(\vphi^{*,-i},\vphi_i)}_{j,k}, \qquad\quad 
	\xi_{j,k}=\xi^{x;(\vphi^{*,-i},\vphi_i)}_{j,k}.
	\end{equation}	
	The properties in Lemma \ref{lemmaprocess} imply that, for $j \neq i$, $s \geq 0$, $\tau_{j,k}< \infty$, 
	\begin{subequations}
		\label{prop2}
		\begin{align}
		& (\mm_j V_j - V_j) \big( X_s \big) >0, \label{prop2-M} \\
		& (\mm_j V_j - V_j) \big( X_{ ( \tau_{j,k})^-} \big) =0, \label{prop2-tau} \\
		& \xi_{j,k} = \xi_j \big( X_{ (\tau_{j,k})^-} \big). \label{prop2-d}
		\end{align}
	\end{subequations}
{We first approximate $V_i$ with regular functions. Since (ii) and (iii) hold, by \cite[proof of Thm.~10.4.1 and App.~D]{ksendalSDE} there exists a sequence of functions $\{V_{i,m}\}_{m \in \nn}$ such that:
	\begin{itemize}
		\item[(a)] $V_{i,m} \in C^2(\cap_{j \neq i} \mathcal D_j) \cap C^0(\overline{\cap_{j \neq i} \mathcal D_j})$ for each $m \in \nn$ (in particular, the function $\aaa V_{i,m}$ is well-defined in $\cap_{j \neq i} \mathcal D_j$);
		\item[(b)] $V_{i,m} \to V_i$ as $m \to \infty$, uniformly on the compact subsets of $\overline{\cap_{j \neq i} \mathcal D_j}$;
		\item[(c)] $\{\aaa V_{i,m}\}_{m \in \nn}$ is locally bounded in $\cap_{j \neq i} \mathcal D_j$ and $\aaa V_{i,m} \to \aaa V_i$ as $m \to \infty$, uniformly on the compact subsets of $\cap_{j \neq i} \mathcal D_j \setminus \partial \mathcal{D}_i$.
	\end{itemize}
For each $r >0$ and $\ell \in \nn$, we set
\begin{equation}
\label{stopping}
\tau_{r,\ell} = \tau_r \land \tau_{1,\ell} \land \dots \land \tau_{N,\ell},
\end{equation}
	where $\tau_r = \inf \{ s > 0 : X_s \notin B(0,r) \} $ is the exit time from the ball with radius $r$. By \eqref{prop2-M} we have that $X_s \in \cap_{j \neq i} \mathcal D_j$ for each $s > 0$. Since $V_{i,m} \in C^2(\cap_{j \neq i} \mathcal D_j)$ by (a), for each $m \in \nn$ we can apply It\^o's formula to the process $e^{-\rho_i t}V_{i,m}(X_t)$ over the interval $[0, \tau_{r,\ell})$. Taking the conditional expectations, we get
	\begin{multline}
	\label{itoAPPROX}		
	V_{i,m}(x) = \eee_x \bigg[ - \int_{0}^{\tau_{r,\ell}} e^{-\rho_i s} (\aaa V_{i,m} - \rho_iV_{i,m}) (X_s) ds - \sum_{\tau_{i,k}< \tau_{r,\ell}} e^{-\rho_i \tau_{i,k}} \Big( V_{i,m} \big( X_{\tau_{i,k}} \big) - V_{i,m} \big( X_{ ( \tau_{i,k} )^-} \big) \Big)
	\\
	- \sum_{j\neq i} \sum_{\tau_{j,k}< \tau_{r,\ell}} e^{-\rho_i \tau_{j,k}} \Big( V_{i,m} \big( X_{\tau_{j,k}} \big) - V_{i,m} \big( X_{ ( \tau_{j,k} )^-} \big) \Big) + e^{-\rho_i \tau_{r,\ell}}V_{i,m}(X_{(\tau_{r,\ell})^-}) \bigg].
	\end{multline}	
	Notice that \eqref{itoAPPROX} is well defined: since $\tau_{r,\ell} \leq \tau_r$, $X$ belongs to the compact set $\overline{B(0,r)}$, where the continuous function $V_{i,m}$ is bounded; moreover, the two summations consist in a finite number of terms since $\tau_{r,\ell} \leq \tau_{i,\ell}$ for each $i \in \{1,\dots,n\}$. Also, notice that in \eqref{itoAPPROX} we need to write $V_{i,m}(X_{(\tau_{r,\ell})^-})$, as we have a jump at time $\tau_{r,\ell}$. We now pass to the limit in \eqref{itoAPPROX} as $m \to \infty$: since $X$ belongs to the compact set $\overline{B(0,r)}$, by the uniform convergence in (b) and (c) we get}
	\begin{multline}
	\label{ito}		
	V_i(x) = \eee_x \bigg[ - \int_{0}^{\tau_{r,\ell}} e^{-\rho_i s} (\aaa V_i - \rho_iV_i) (X_s) ds - \sum_{\tau_{i,k}< \tau_{r,\ell}} e^{-\rho_i \tau_{i,k}} \Big( V_i \big( X_{\tau_{i,k}} \big) - V_i \big( X_{ ( \tau_{i,k} )^-} \big) \Big)
	\\
	- \sum_{j\neq i} \sum_{\tau_{j,k}< \tau_{r,\ell}} e^{-\rho_i \tau_{j,k}} \Big( V_i \big( X_{\tau_{j,k}} \big) - V_i \big( X_{ ( \tau_{j,k} )^-} \big) \Big) + e^{-\rho_i \tau_{r,\ell}}V_i(X_{(\tau_{r,\ell})^-}) \bigg].
	\end{multline}	
	We now estimate each term in the right-hand side of \eqref{ito}. As for the first term, since $(\mm_jV_j - V_j)(X_s) > 0$  for each $j \neq i$ by (\ref{prop2-M}), from (\ref{pbNEW-4}) it follows that 
	\begin{equation}
	\label{magg1}
	(\aaa V_i-\rho_iV_i) (X_s) \geq -f_i(X_s),
	\end{equation}
	for all $s \in [0,\tau_S]$. Let us now consider the second term: by (\ref{pbNEW-2}) and the definition of $\mm_i V_i$ in \eqref{MH}, for every stopping time $\tau_{i,k}<\tau_S$ we have 
	\begin{align}
	\label{magg2}
	V_i \big( X_{ ( \tau_{i,k} )^-} \big) & \leq \mm_i V_i \big( X_{ ( \tau_{i,k} )^-} \big)  \nonumber \\
	& = \inf_{\delta \in Z_i} \big\{ V_i \big( \Gamma^i \big( X_{ ( \tau_{i,k} )^-} , \delta \big) \big) + \phi_i \big( X_{ ( \tau_{i,k} )^-} , \delta \big) \big\} \nonumber \\
	& \leq V_i \big( \Gamma^i \big( X_{ ( \tau_{i,k} )^-} , \xi_{i,k} \big) \big) + \phi_i \big( X_{ ( \tau_{i,k} )^-} , \xi_{i,k} \big) \nonumber \\
	& = V_i \big( X_{ \tau_{i,k} } \big) + \phi_i \big( X_{ ( \tau_{i,k} )^-} , \xi_{i,k} \big).
	\end{align}
	As for the third term, let us consider any stopping time $\tau_{j,k}<\tau_S$, with $j \neq i$. By \eqref{prop2-M} we have $(\mm_j V_j - V_j) \big( X_{ ( \tau_{j,k} )^-} \big) = 0$; hence, the condition in (\ref{pbNEW-3}), the definition of $\hh_{i,j} V_i$ in \eqref{MH} and the expression of $\xi_{j,k}$ in \eqref{prop2-d} imply that
	\begin{align}
	\label{magg3}
	V_i \big( X_{ ( \tau_{j,k} )^-} \big) & = \hh_{i,j} V_i \big( X_{ ( \tau_{j,k} )^-} \big) \nonumber \\ 
	& = V_i \big( \Gamma^j \big( X_{ ( \tau_{j,k} )^-} , \xi_{j} \big( X_{ ( \tau_{j,k} )^-}) \big) \big) + \psi_{i,j} \big( X_{ ( \tau_{j,k} )^-} , \xi_{j} \big( X_{ ( \tau_{j,k} )^-})\big) \nonumber \\
	& = V_i \big( \Gamma^j \big( X_{ ( \tau_{j,k} )^-} , \xi_{j,k} \big) \big) + \psi_{i,j} \big( X_{ ( \tau_{j,k} )^-} , \xi_{j,k} \big) \nonumber \\
	& = V_i \big( X_{  \tau_{j,k} } \big) + \psi_{i,j} \big( X_{ ( \tau_{j,k} )^-} , \xi_{j,k} \big). 
	\end{align}
	By \eqref{ito} and the estimates in \eqref{magg1}-\eqref{magg3} it follows that
	\begin{multline*}
	V_i(x) \leq \eee_x \bigg[ \int_{0}^{\tau_{r,\ell}} e^{-\rho_i s} f_i(X_s) ds + \sum_{\tau_{i,k}< \tau_{r,\ell}} e^{-\rho_i \tau_{i,k}} \phi_i \big( X_{ ( \tau_{i,k} )^-} , \xi_{i,k} \big)
	\\
	+ \sum_{j\neq i}\sum_{\tau_{j,k}< \tau_{r,\ell}} e^{-\rho_i \tau_{j,k}} \psi_{i,j} \big( X_{ ( \tau_{j,k} )^-} , \xi_{j,k} \big) + e^{-\rho_i \tau_{r,\ell}} V_i(X_{\tau_{r,\ell}}) \bigg]. 
	\end{multline*}
	Thanks to the conditions in \eqref{L1} and \eqref{L1process} together with the polynomial growth of $V_i$ in (ii), we now use the dominated convergence theorem and pass to the limit, first as $r \to \infty$ and then as $\ell \to \infty$, {so that the stopping times $\tau_{r,\ell}$ converge to $\tau_S$ by \eqref{LimImp}.} In particular, for the fourth term we notice that {by (ii) and \eqref{L1process} we have }
	\begin{equation}
	\label{domcon}
	V_i(X_{(\tau_{r,\ell})^-}) \leq C(1+|X_{(\tau_{r,\ell})^-}|^p) \leq C(1+\|X\|_\infty^p) \in L^1(\Omega),
	\end{equation}
	for suitable constants $C>0$ and $p \in \nn$. Therefore, the corresponding limit for the fourth term immediately follows by the continuity of $V_i$ in the case $\tau_S < \infty$ and by \eqref{domcon} itself in the case $\tau_S = \infty$ (as a direct consequence of \eqref{L1process}, we have $\|X\|^p_\infty < \infty$ a.s.). Hence, 
	\begin{multline*}
	V_i(x) \leq \eee_x \bigg[ \int_{0}^{\tau_{S}} e^{-\rho_i s} f_i(X_s) ds + \sum_{\tau_{i,k}< \tau_{S}} e^{-\rho_i \tau_{i,k}} \phi_i \big( X_{ ( \tau_{i,k} )^-} , \xi_{i,k} \big)
	\\
	+ \sum_{j \neq i} \sum_{\tau_{j,k}< \tau_{S}} e^{-\rho_i \tau_{j,k}} \psi_{i,j} \big( X_{ ( \tau_{j,k} )^-} , \xi_{j,k} \big) + e^{-\rho_i \tau_{S}} h_i(X_{(\tau_{S})^-})\mathbbm{1}_{ \{ \tau_{S} < +\infty \} } \bigg] = J^i(x;(\vphi^{*,-i},\vphi_i)).
	\end{multline*}
	
	\textit{Step 2: $V_i(x) = J^i(x;\vphi^*)$.} Similar as in Step 1, except that  all the inequalities are equalities by the properties of $\vphi^*$. 
\end{proof}

\subsection{Example: Two-Player Cash Management Game}
\label{2-player}

Now let us revisit the cash management game in Section \ref{sec:introduction}, using the notations introduced in Section \ref{sec:DefGames}. We here consider the two-player game: $N=2$, $b_i=0$, $\sigma_i=\sigma>0$. The uncontrolled cash level of the two players $X_t = (X^1_t,X^2_t)$ is 
 $${dX^i_t=\sigma dW^i_t, \qquad X^i_{0-}=x_i},$$ 
 where $W$ is a two-dimensional standard Brownian motion and $x \in \rr^2$. Let
 $$
 \vphi=(\vphi_1,\vphi_2), \qquad \vphi_1=(A_1,\xi_1), \qquad  \vphi_2=(A_2,\xi_2), 
 $$
 denote the strategies of the players for this game, as in Definition \ref{defPhi}. Since player $i \in \{1,2\}$ intervenes by shifting her own component $X^i$ of the cash level, we have 
 $$
 \Gamma^i(x,\delta)=x+\delta, \qquad
 Z_1=\{(\delta_1,0): \delta_1 \in\rr\}, \qquad
Z_2=\{(0,\delta_2): \delta_2 \in \rr\}.
$$
This means that player 1 (resp.~player 2) intervenes by moving the process from state $(x_1,x_2)$ to state
$$
\xi_1(x_1,x_2)=\big(x_1+ \tilde\xi_1(x_1,x_2), \,\,x_2\big) \qquad \Big(\text{resp. } \xi_2(x_1,x_2)=\big(x_1, \,\,x_2 +\tilde \xi_2(x_1,x_2)\big) \,\Big),
$$
for suitable functions $\tilde \xi_i$. Notice that, as a consequence,  the controlled process $X_t=(X^1_t,X^2_t)$ satisfies
 \begin{equation*}
 dX^i_t=\sigma dW^i_t+\sum_{\tau_{i,k}\le t}\delta(t-\tau_{i,k})\tilde \xi_{i,k}, \qquad X^i_{0-}=x_i,
 \end{equation*}
where 
$$
\tilde \xi_{i,k} = \tilde \xi_i \Big(X^1_{(\tau_{i,k})^-},\, X^2_{(\tau_{i,k})^-}\Big).
$$

Let now $i,j\in\{1,2\}$ with $j\neq i$. The cost function for player $i$ under the control policy $\vphi=(\vphi_1,\vphi_2)$ is given by
\begin{equation*}
J^i(x;\vphi)=\eee_x\left[\int_0^{\infty}e^{-rt}f_i(X_t)dt+\sum_{k\geq1}e^{-r\tau_{i,k}}\phi_i(\xi_{i,k})+\sum_{k\geq1}e^{-r\tau_{j,k}}\psi_{i,j}(\xi_{j,k})\right],
\end{equation*}
where 
\begin{empheq}[left=\empheqlbrace]{align*}
&f_i(x)=h\left|x_i-\frac{1}{N}\sum_{j=1}^Nx_j\right|,\quad x\in\rn^2,N=2,\\
&\phi_i(\xi)=K+k|\xi|,\quad\xi\in\rn,\\
&\psi_{i,j}(\xi)= c,\quad\xi\in\rn,
\end{empheq}
for positive constants $h,K,k,c$. The goal of player $i$ is to minimize the cost $J^i$: we are interested in finding $\vphi^*=(\vphi_1^*,\vphi_2^*)$ such that Definition \ref{DefNash} holds.

By the symmetry of the problem structure, we seek for an NE where the action regions take the form of 
$$A_1=\{x:x_1-x_2\geq u\} ,\qquad A_2=\{x:x_2-x_1\geq u\}$$ 
for some $u>0$, with appropriate impulse functions such that
$$\xi_1(x)=(U-x_1+x_2, \, 0),\qquad \xi_2(x)=(0, \, U-x_2+x_1)$$ 
for some $U<u$. Recall that this means that player 1 (resp.~player 2) intervenes when $X^1_t-X^2_t \geq u$ (resp.~$X^2_t-X^1_t \geq u$) and shifts her component so as to have $X^1_t-X^2_t = U$ (resp.~$X^2_t-X^1_t = U$). Note that $A_1\cap A_2=\emptyset$ and that $C=\{x:-u<x_1-x_2<u\}$ is the common waiting region, i.e., where no player intervenes. 

By the same symmetry argument, we look for payoffs in the form of
$$V_i(x_1,x_2)=w_i(x_i-x_j),\quad i,j\in\{1,2\},\quad i\neq j,$$
for some functions $w_i$. In this case, player 1 and player 2 are indistinguishable, therefore it suffices to study the payoff of player 1. Now the waiting region for player 1 is $D_1=\{x:x_1-x_2<u\}$, and define $D_{-1}=\{x:x_2-x_1<u\}=\{x:x_1-x_2>-u\}$. By the corresponding QVI and the regularity requirement in the Verification Theorem \ref{thm:verification}, the function $w_1:\mathbb{R}\to\mathbb{R}$ need to satisfy the following system of equations and inequalities:
\begin{subequations}
\begin{empheq}[left=\empheqlbrace]{align}
&w_1(s)=\begin{cases}
w_1(u)+k(s-u),&s\geq u;\\
\frac{h_2}{r}s+c_1e^{\lambda_2s}+c_2e^{-\lambda_2s}, &0\leq s\leq u;\\
-\frac{h_2}{r}s+\left(c_1+\frac{h_2}{r\lambda_2}\right)e^{\lambda_2s}+\left(c_2-\frac{h_2}{r\lambda_2}\right)e^{-\lambda_2s},&-u\leq s\leq 0;\\
w_1(-u),&s\leq -u;
\end{cases}\label{eq:vfn-form}\\
&\dot{w}_1(u)=\dot{w}_1(U)=k;\label{eq:vfn-ac-c1}\\
&w_1(u)=w_1(U)+K+k(u-U);\label{eq:vfn-ac-c0}\\
&w_1(-u)=w_1(-U)+c;\label{eq:vfn-na-c0}\\
&\m w_1(s) - w_1(s)\geq 0,\quad \forall s\in D_{-1};\label{eq:vfn-nonloc}
\end{empheq}
\end{subequations}
where $h_2=\frac{h}{2},\hspace{1pt}\sigma_2=\sqrt{2}\sigma,\hspace{1pt}\lambda_2=\frac{\sqrt{2r}}{\sigma_2}$ and $(c_1,c_2,u,U)$ remain to be determined. Accordingly, $w_2(s) = w_1(-s)$ for any $s\in\mathbb{R}$. 

Now, similar argument as in \cite{Constantinides1978} shows that when $h_2-rk>0$, $c>0$, there exists a solution $w_1$ to \Crefrange{eq:vfn-form}{eq:vfn-na-c0} satisfying $c_1<0$, $c_2>0$, $0<U<u$\label{thm:ext-ne-1}. Moreover, if such solution $w_1$ as above satisfies \eqref{eq:vfn-nonloc}, then an NE to the cash management problem  $\vphi^*=(\vphi_1^*,\vphi_2^*)$ is characterized by
\begin{equation}\label{eq:2p-ne-1}\tag{NE-1}
\begin{cases}
A_1^*=\{x:x_1-x_2\geq u\},\quad A_2^*=\{x:x_2-x_1\geq u\},\\
\xi_{1}^*(x)=(U-x_1+x_2, \, 0), \quad
\xi_{2}^*(x)=(0, \, U-x_2+x_1).\\
\end{cases}
\end{equation} 
The corresponding payoffs are given by $$V_1(x_1,x_2)=w_1(x_1-x_2),\quad V_2(x_1,x_2)=w_1(x_2-x_1).$$
The optimality of \eqref{eq:2p-ne-1} can be easily verified by checking the conditions in the Verification Theorem \ref{thm:verification}.

Figure \ref{subfig:2p-vfn-id} shows the equilibrium payoffs for both players if they adopt the control policy specified in \eqref{eq:2p-ne-1}. Figure \ref{subfig:2p-ctrl-id} illustrates the control policy, with $h=2$, $K=3$, $k=1$, $r=0.5$, $\sigma = \frac{\sqrt{2}}{2}$ and $c = 1$,  where the thresholds can be solved as $U=0.686$ and $u=5.658$, with $c_1=-0.003$ and $c_2=1.972$. 
\begin{figure}[H]
\centering
\begin{subfigure}[b]{0.45\textwidth}
\includegraphics[width=\textwidth]{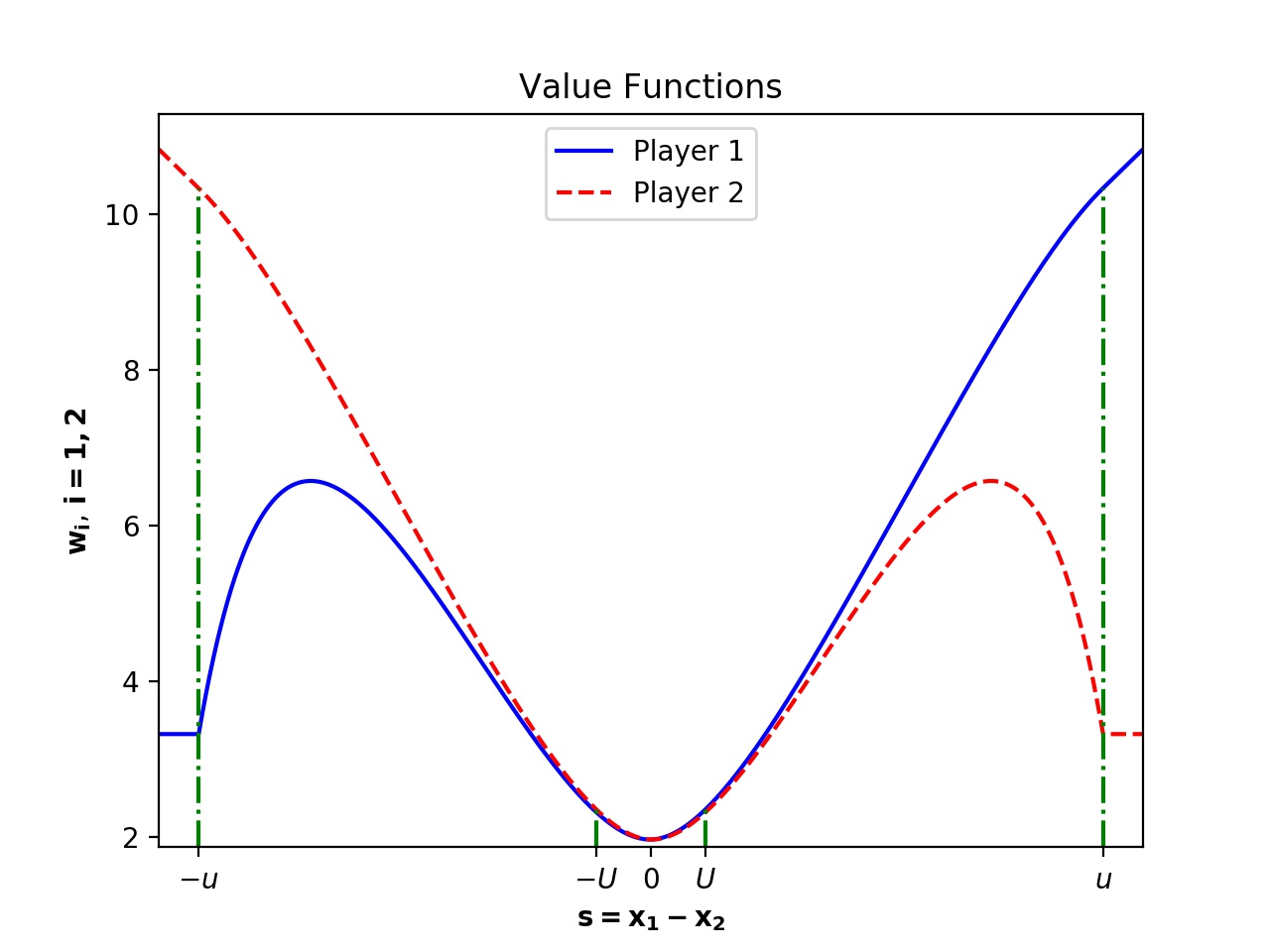}
\caption{Payoffs after change of variables.}
\label{subfig:2p-vfn-id}
\end{subfigure}
\begin{subfigure}[b]{0.45\textwidth}
\centering
\includegraphics[width=0.8\textwidth]{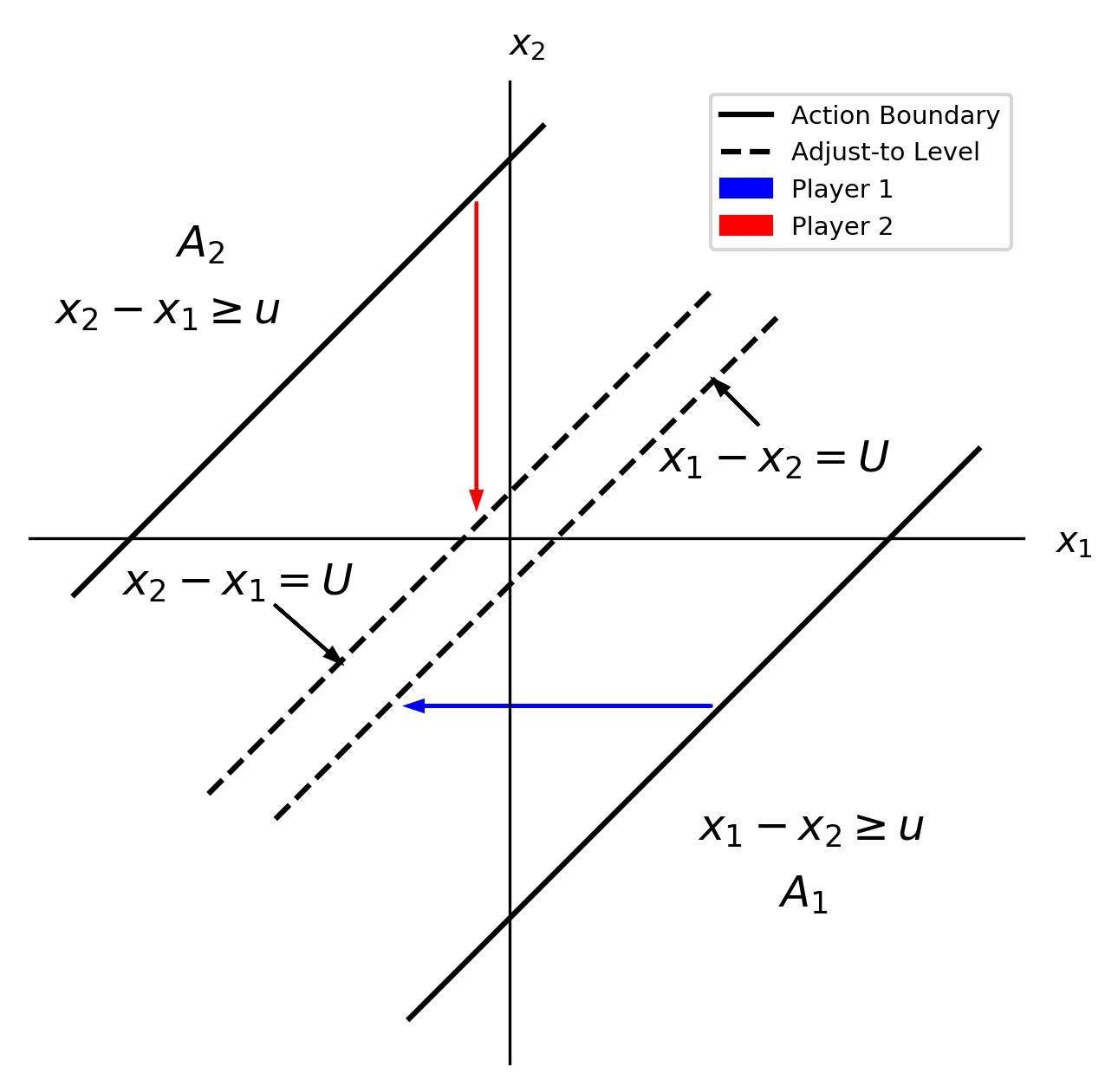}
\caption{Control policy.}
\label{subfig:2p-ctrl-id}
\end{subfigure}
\caption{A Nash equilibrium and the payoffs}
\end{figure}

\paragraph{Multiple NEs.}
In general, NE for nonzero-sum games may not be unique.  In this example, an alternative NE can be derived by switching action regions between the two players. For instance, if  player 1 is to dictate the game whereas player 2 is a complete follower, the action region for player 1 can be characterized by $A_1=\{x\in\rn^2:|x_1-x_2|>u\}$, and  $A_2=\emptyset$. That is, let $s=x_1-x_2$ then $V_1(x)=w_1(x_1-x_2)$ and $V_2(x)=w_2(x_1-x_2)$, where
\begin{subequations}
\begin{empheq}[left=\empheqlbrace]{align}
&w_1(s)=\begin{cases}
w_1(u)+k(s-u),&s\geq u;\\
\frac{h_2}{r}s+c_1e^{\lambda_2s}+\left(c_1+\frac{h_2}{r\lambda_2}\right)e^{-\lambda_2s}, &0\leq s\leq u;\\
w_1(-s),&s\leq 0;
\end{cases}\label{eq:vfn-dic-fm}\\
&\dot{w}_1(u)=\dot{w}_1(U)=k;\label{eq:vfn-dic-c1}\\
&w_1(u)=w_1(U)+K+k(u-U);\label{eq:vfn-dic-c0}\\
&\mm w_1(s) - w_1(s)\geq 0,\quad s\in\rn;\label{eq:vfn-dic-nonloc}
\end{empheq}
\end{subequations}
\begin{subequations}
\begin{empheq}[left=\empheqlbrace]{align}
&w_2(s)=\begin{cases}
w_2(u),&s\geq u;\\
\frac{h_2}{r}s+c_2e^{\lambda_2s}+\left(c_2+\frac{h_2}{r\lambda_2}\right)e^{-\lambda_2s}, &0\leq s\leq u;\\
w_2(-s),&s\leq 0;
\end{cases}\label{eq:vfn-flw-fm}\\
&w_2(u)=w_2(U)+c;\label{eq:vfn-flw-c0}\\
&\mm w_2(s) - w_2(s)\geq 0,\quad -u\leq s\leq u.\label{eq:vfn-flw-nonloc}
\end{empheq}
\end{subequations}
Now, assume that $h_2-rk>0$, $c>0$, then again one can show that there exists a solution $w_1$ satisfying \Crefrange{eq:vfn-dic-fm}{eq:vfn-dic-nonloc} with $c_1\in(-\frac{h_2}{r\lambda_2},0)$ as well as $0<U<u$, and $w_2$ satisfying \Crefrange{eq:vfn-flw-fm}{eq:vfn-flw-c0}.
Moreover, if  such solution $w_2$  satisfies \eqref{eq:vfn-flw-nonloc}, then an NE to the cash management problem in Section \ref{2-player} $\vphi^*=(\vphi_1^*,\vphi_2^*)$ is characterized by
\begin{equation}\label{eq:2p-ne-2}\tag{NE-2}
\begin{cases}
A_1^*=\{x:|x_1-x_2|\geq u\},\quad A_2^*=\emptyset;\\
\xi_{1}^*(x)=
\begin{cases}
(U-x_1+x_2, \, 0), & \mbox{if}  \ \ \hspace{1pt}x_1-x_2\geq u,\\
(-U-x_1+x_2, \, 0), & \mbox{if} \ \ \hspace{1pt}x_1-x_2\leq -u.
\end{cases}
\end{cases}
\end{equation} 
Notice that we do not need to define $\xi^*_2$, as player 2 never intervenes.

 Figure \ref{subfig:2p-vfn-dic} shows the payoffs and Figure \ref{subfig:2p-ctrl-dic} demonstrates the NE, 
 under the same values of $h$, $K$, $k$, $r$ and $\sigma$, with thresholds $U=0.993$ and $u=1.999$, and  $c_1=-0.101$ and $c_2=-0.133$.
\begin{figure}[h]
\centering
\begin{subfigure}[b]{0.45\textwidth}
\centering
\includegraphics[width=\textwidth]{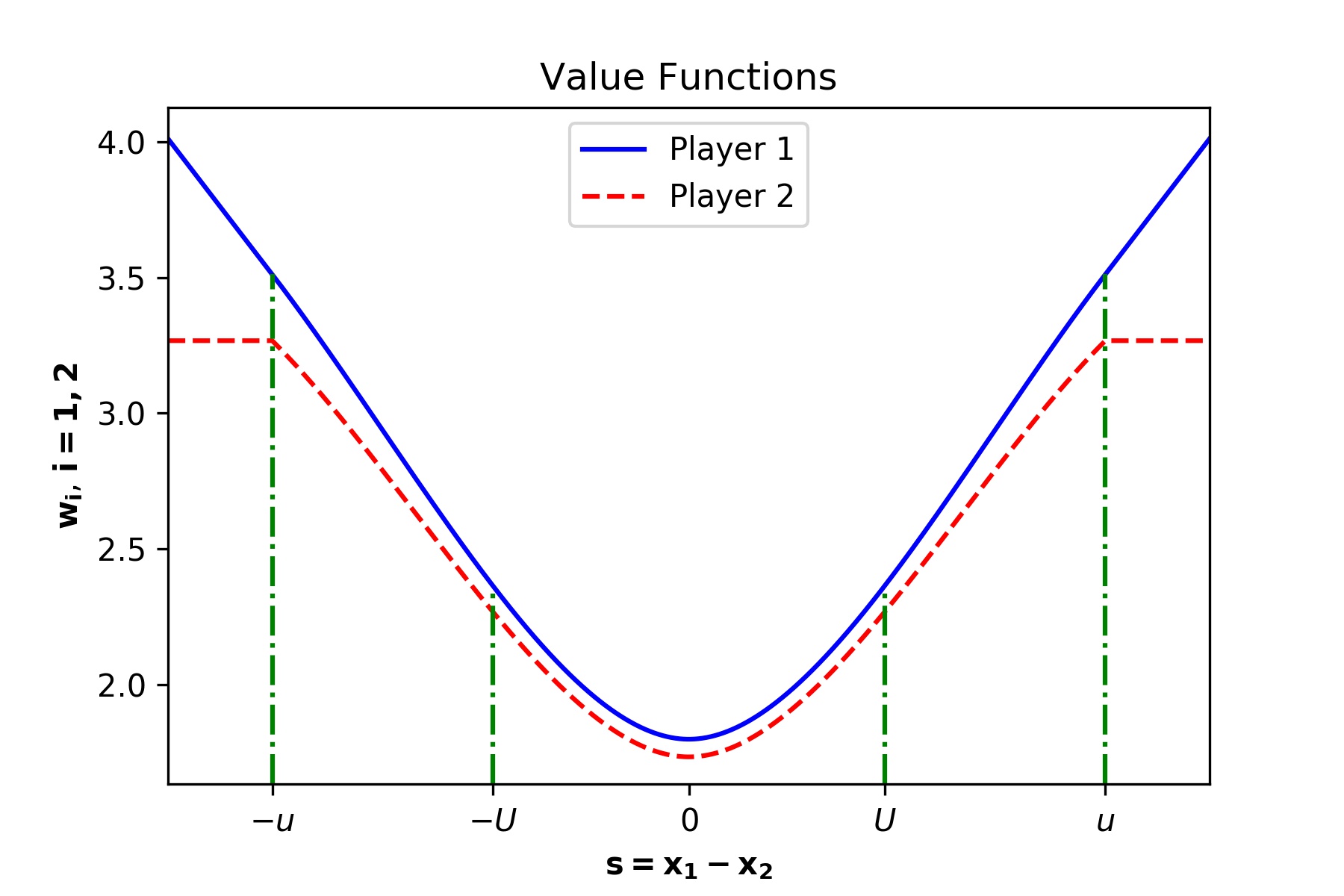}
\caption{Player 1 is the sole controller.}
\label{subfig:2p-vfn-dic}
\end{subfigure}
\begin{subfigure}[b]{0.45\textwidth}
\centering
\includegraphics[width=\textwidth]{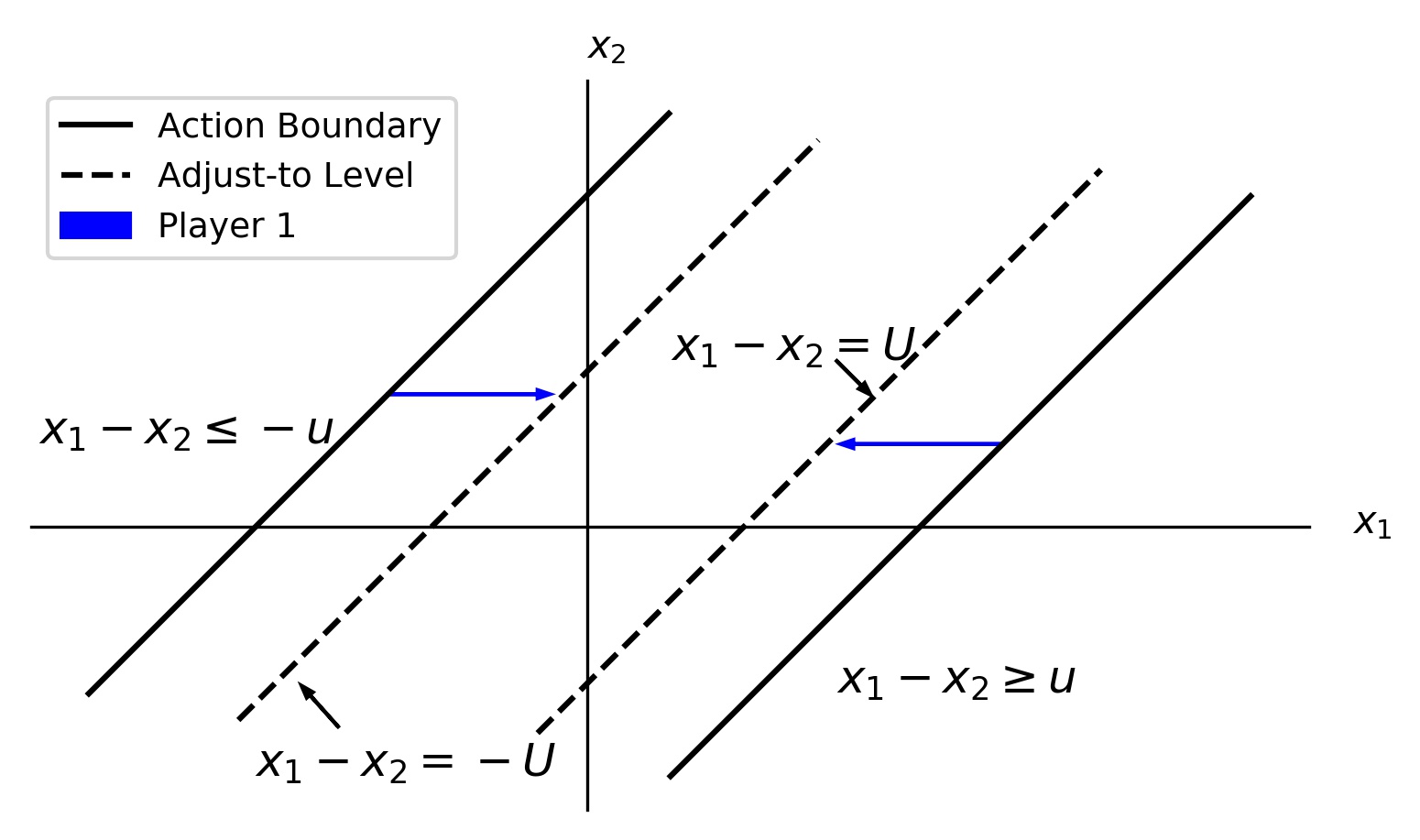}
\caption{Control policy.}
\label{subfig:2p-ctrl-dic}
\end{subfigure}
\caption{Alternative Nash equilibrium and the payoffs}
\end{figure}

{      
\section{Mean-Field Games (MFGs) with Impulse Controls}
As seen from the previous section,  it is  difficult to solve analytically the general $N$-player impulse control game. 
We will now introduce an MFG framework for the impulse control game and show that this MFG provides a reasonable approximation to the $N$-player game. More precisely, we show that under appropriate technical conditions, the existence of unique NE solution to  the MFG, which is an $\epsilon$-NE approximation to the $N$-player game, with $\epsilon=O\left(\frac{1}{\sqrt{N}}\right)$.

\subsection{Formulation of MFGs with impulse controls}
Given the $N$-player stochastic game formulation (\ref{eqn-Nplayer}), its natural MFG formulation goes as follows.
Let $(\Omega, \mf,\{\mf_t\}_{t\geq0},\pbm)$ be a filtered probability space supporting an $\{\mf_t\}_{t\geq0}$-adapted standard Brownian motion $W$.  Consider an infinite number of rational and indistinguishable players who interact through the cost structure consisting of a running cost $f$ and the cost of control $\phi$. For each player, her uncontrolled state process is given by $$dX_t=b(X_t)dt+\sigma(X_t)dW_t, \quad X_{0-}\sim\mu.$$
Each player seeks for the optimal impulse control policy $\vphi^*$ among the set of admissible impulse controls $\mathcal A$ to minimize the total discounted cost. Controls $\vphi\in\mathcal A$ are here represented by $\vphi =(A,\xi)$, where $A$, a closed subset of $\rn$, is called the action region and $\xi:\rn\to\rn$ is a measurable function. Under the control policy $\vphi$, the state process becomes
$$dX_t=b(X_{t-})dt+\sigma(X_{t-})dW_t+\sum_{n\geq1}\delta(t-\tau_n)\xi_n,\quad X_{0-}\sim\mu,$$
where $\mu$ denotes the initial distribution of the state, and
\begin{align*}
&\tau_1=\inf\{t\geq0:X_{t-}\in A\},\quad \tau_{n}=\inf\{t>\tau_{n-1}:X_{t-}\in A\},\,n\geq2;\\
&\xi_n=\xi(X_{\tau_n-})\in\mf_{\tau_n},\,n\geq1.
\end{align*}
Therefore, the control policy $\vphi$ can also be characterized by a sequence of stopping times and the associated random variables, $\vphi=\{(\tau_n,\xi_n)\}_{n\geq1}$. The optimization problem faced by individual player is given by,
\begin{equation}\label{eq:smfg-gen}\tag{MFG}
\begin{gathered}
V(x|m)=\inf_{\vphi\in\mathcal A}J_{\infty}(x,\vphi|m),\\
J_{\infty}(x,\vphi|m)=\eee_{\mu}\left[\left.\int_0^{\infty}e^{-rt}f(X_t,m)dt+\sum_{n=1}^\infty e^{-r\tau_{n}}\phi(\xi_{n})\right|X_{0-}=x\right],\\
dX_t=b(X_{t-})dt+\sigma(X_{t-})dW_t+\sum_{\tau_n\le t} \delta(t-\tau_{n})\xi_{n},\qquad X_{0-}\sim\mu,
\end{gathered}
\end{equation}
where $m$ denotes the mean information:
\begin{equation*}
m=\limsup_{t \to \infty} \eee_{\mu}[X_t].
\end{equation*}
{\color{black} Finally, for $x,m \in \rr$, we define the function $\mm V(x|m)$ in the usual way:
\begin{equation*}
\mm V(x|m) = \inf_{\delta \in \rr} \, \big\{ V(x+\delta|m) + \phi(\delta) \big\}.
\end{equation*} 
}

Compared to (\ref{eqn-Nplayer}) for $N$-player games,  individual players in an MFG now lose sight of individual opponents, hence the term  $\psi_{i,j}$ has disappeared in the MFG formulation. \textcolor{black}{As for the intervention costs, we chose them in the form $\phi(\xi_n)$ in order to have simpler notations in this section; however, one may also consider intervention costs in the form $\phi(\xi_n, X_{(\tau_n)^-},m)$.}

\begin{defn}\label{def:smfg-sol}
A pair of control policy and mean information $(\vphi^*=(A^*,\xi^*),m^*)$, with $\vphi^*\in\mathcal A$ and $m^*\in\rn$, is said to be a solution to the \eqref{eq:smfg-gen} if 
\begin{itemize}
\item $V(x|m^*) = J_{\infty}(x,\vphi^*|m^*)$,
\item $\displaystyle{m^*=\limsup_{t\to\infty}\eee_{\mu}\left[X^{*}_t\right]}$, where
\begin{equation*}
dX_t^{*}=b(X_{t-}^{*})dt+\sigma(X_{t-}^{*})dW_t+\sum_{\tau_n^*\le t} \delta(t-\tau_n^*)\xi_n^*,\quad X_{0-}^*\sim\mu,
\end{equation*}
such that
\begin{align*}
&\tau_1^*=\inf\{t\geq0:X_{t-}^*\in A^*\},\quad \tau_{n}^*=\inf\{t>\tau_{n-1}^*:X_{t-}^*\in A^*\},\,n\geq2;\\
&\xi_n^*=\xi^*(X_{(\tau^*_{n})^-}^*)\in\mf_{\tau_n^*},\,n\geq1.
\end{align*}
\end{itemize}
\end{defn}

\subsection{Solution to symmetric MFGs with impulse controls}
We first analyze the existence of the solution to (\ref{eq:smfg-gen}), under  some technical assumptions. In particular, we impose symmetry on the dynamics and cost functions.
\begin{description}
	\item[(A1)]The uncontrolled dynamics is symmetric in the sense that 
	\begin{equation*}
		dX_t=\sigma dW_t,\quad X_{0-}\sim \mu,
	\end{equation*}
	where $\sigma>0$ is a constant, and $\mu$, with $\int_\rn |x|\mu(dx)<\infty$, is symmetric around its mean.
	\item[(A2)]The cost of control satisfies
	\begin{empheq}[left=\empheqlbrace]{align}
		&K:=\inf_{\xi\in\rn}\phi(\xi)>0,\\
		&\phi\in\mathcal{C}(\rn\setminus\{0\}),\\
		&\lim_{|\xi|\to\infty}\phi(\xi)=+\infty,\quad\sup_{\xi\in\rn}\frac{\phi(\xi)}{1+|\xi|}\leq k\,\text{ for some }k>0,\\
		&\phi(\xi_1)+\phi(\xi_2)\geq \phi(\xi_1+\xi_2)+K,\quad \forall \xi_1,\xi_2\in\rn,\\
		&\phi(-\xi)=\phi(\xi),\quad\forall\xi\in \rn. \label{eq:sym-ctrl-cost}
	\end{empheq}
	\item[(A3)]The running cost $f:\rn\times\rn\to\rn$, jointly continuous with $f(x,m)\geq 0$, satisfies the following properties, for any fixed $m\in\rn$.
	\begin{description}
		\item[(A3-1)] There exists $C_f=C_f(m)>0$ such that, for each $x,y \in \rr$, $$|f(x,m)-f(y,m)|<C_f|x-y|;$$
		\item[(A3-2)] For each $\delta\in\rn$, $$f(m+\delta,m)=f(m-\delta, m).$$
	\end{description}
	\item[(A4)] \textcolor{black}{For each $m \in \rr$ fixed, the strategy $\tilde\vphi(m) = (A_m,\xi_m)$ defined  by
	\begin{equation}
	\label{AxiMFG}
	A_m= \{ x \in \rr: \mm V(x|m) - V(x|m)  = 0 \}, \qquad \xi_m(x)= \arg\min_{\delta \in \rr}\{ V(x+\delta|m) + \phi(\delta) \},
	\end{equation}
	is admissible and is the unique strategy $\vphi$ such that $V(x|m)=J_{\infty}(x,\vphi|m)$.}
\end{description}

\begin{remark}
\label{remonA4}
\textcolor{black}{We remark that, when $m \in \rr$ is fixed, problem \eqref{eq:smfg-gen} becomes a standard single-player impulse control problem, whose typical solution has the form \eqref{AxiMFG} in Assumption (A4): see \cite{ksendal2007} for an introduction to single-player impulse problems and e.g.~\cite{basei}, \cite{CadenillasLaknerPinedo}, \cite{CadenillasZapatero}, \cite{Korn}, \cite{MitchellFengMuthuraman} and \cite{MundacaOksendal} for some applications having solutions in the form \eqref{AxiMFG}.}
\end{remark}


\begin{thm}\label{thm:sys}
Under Assumptions (A1)--(A4), \eqref{eq:smfg-gen} admits a solution in the sense of Definition \ref{def:smfg-sol}.
\end{thm}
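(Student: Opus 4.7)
The plan is to construct the fixed point explicitly via symmetry. Let $\bar x := \int_\rr y\,\mu(dy)$ be the mean of the initial distribution, which is finite by (A1). I would take $m^* := \bar x$, $\vphi^* := \tilde\vphi(m^*) = (A^*,\xi^*)$ as supplied by Assumption (A4), and let $X^*$ be the corresponding controlled state starting from $X^*_{0-}\sim\mu$. By (A4) the optimality condition $V(x|m^*) = J_{\infty}(x,\vphi^*|m^*)$ is automatic, so only the consistency $m^* = \limsup_{t\to\infty} \eee_\mu[X^*_t]$ has to be verified.

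The key step is to show that $V(\cdot|m^*)$ is symmetric about $m^*$, from which symmetry of $A^*$ and the appropriate antisymmetry of $\xi^*$ about $m^*$ will follow. Let $v(y) := V(y+m^*|m^*)$. By (A3-2) the translated running cost $f(y+m^*,m^*)$ is even in $y$; by (A2) the impulse cost $\phi$ is even; and the uncontrolled dynamics $\sigma\,dW_t$ is reflection-invariant in law. Given any admissible strategy $\vphi=(A,\xi)$ of the single-player problem started from $y$, its reflection $\tilde\vphi := \bigl(-A,\, -\xi(-\,\cdot\,)\bigr)$ started from $-y$ and driven by $-W$ produces the reflection of the original state process at the same total cost. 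Taking infima yields $v(-y)=v(y)$, hence $A^* - m^*$ is symmetric about $0$, and the uniqueness of the minimizer in (A4) pins down $\xi^*(m^*+z) = -\xi^*(m^*-z)$.

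Using these symmetries I would verify, by induction on the successive intervention indices $n$, that the law of $X^*_t - m^*$ is symmetric about $0$ for every $t\geq 0$. The base case is (A1); between interventions the law evolves by addition of a centered Gaussian, preserving the symmetry; at each intervention time the pre-jump law is symmetric about $m^*$, the action region $A^*$ is symmetric about $m^*$, and the post-jump map $x \mapsto x+\xi^*(x)$ commutes with reflection about $m^*$ by the odd symmetry of $\xi^*$, so the post-jump law is again symmetric about $m^*$. Combined with the finiteness of $\eee|X^*_t|$ (a consequence of admissibility in (A4), the upper bound $\phi(\xi)\le k(1+|\xi|)$ in (A2), and the finite first moment of $\mu$), this symmetry forces $\eee[X^*_t]=m^*$ for every $t$, hence $\limsup_{t\to\infty}\eee_\mu[X^*_t]=m^*$, closing the fixed-point loop.

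I expect the delicate step to be justifying the evenness $v(-y)=v(y)$ rigorously: one must check that the reflected strategy is admissible in the sense required by (A4) (measurability of $-\xi(-\cdot)$, no accumulation of intervention times, required integrability of the cost), and that the reflected driving Brownian motion $-W$ has the same law as $W$ so that the cost comparison is valid in distribution. Once evenness of $v$ is in hand, everything else---the symmetry of $A^*$, the odd symmetry of $\xi^*$ about $m^*$, the inductive propagation of the symmetry of the law of $X^*_t$, and the fixed-point identity---follows by direct verification against Definition \ref{def:smfg-sol}.
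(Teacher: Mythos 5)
Your construction is the same as the paper's: take $m^*=\eee_\mu[X_{0-}]$, let $\vphi^*$ be the optimal strategy supplied by (A4) for the frozen mean $m^*$, and close the fixed-point loop by showing that the symmetry of the problem about $m^*$ propagates to the law of the controlled state, so that $\eee_\mu[X^*_t]=m^*$ for all $t$. The one place where you genuinely diverge is in how you establish the key symmetry facts (evenness of $V(\cdot\,|m^*)$ about $m^*$, symmetry of the action region, odd symmetry of $\xi^*$), which the paper isolates as Lemma \ref{symm}. The paper proves part (i) of that lemma analytically: it invokes the uniqueness of viscosity solutions to the QVI \eqref{visc1} from \cite{Guo2010} and checks that $x\mapsto V(2m-x|m)$ is again a viscosity solution, using \eqref{eq:sym-ctrl-cost} and (A3-2); parts (ii) and (iii) then follow by the same change-of-variable computation on $\mm V$ and on the $\arg\min$ that you carry out. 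Your route is instead pathwise and probabilistic: reflect the strategy and the driving Brownian motion and compare costs before taking infima. Both work. Your argument is more elementary and self-contained (it does not lean on the viscosity uniqueness result), but it carries the burden you correctly flag — verifying that the reflected strategy is admissible and that the reflection is a bijection of the admissible class, so that the infima really coincide; the paper's PDE argument sidesteps that bookkeeping at the price of importing the comparison/uniqueness theorem. You are also somewhat more explicit than the paper about the inductive propagation of the symmetric law across successive intervention times, which the paper compresses into the single phrase ``by symmetry, $\Gamma(m^*)=m^*$''; that added detail is welcome rather than a deviation.
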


\begin{proof}
Theorem \ref{thm:sys} is proved using a three-step approach (solution for generic $m$, mean information update, fixed point of the composite function). 

First, if the mean information $m$ is given, we solve the corresponding optimal control problem. \textcolor{black}{By (A4), the unique optimal strategy is given by} $\tilde\vphi(m)=(A_m,\xi_m)\in\mathcal A$. Define a mapping from space of mean information $\rn$ to the admissible control set $\mathcal A$, as $$\Gamma_1:\rn\to\mathcal A,\qquad\quad \Gamma_1(m)=\tilde \vphi(m)=(A_m,\xi_m).$$

Next, we update the mean information $m$. Given any $\vphi=(A,\xi)\in\mathcal A$, under Assumption (A1), define the corresponding controlled process
$$dX_t^\vphi=\sigma dW_t+\sum_{n\geq1}\delta(t-\tau_n)\xi_n,$$
where
$$\begin{aligned}
	&\tau_1=\inf\{t\geq0:\,X_{t-}^{\vphi}\in A\},\quad \tau_n=\inf\{t>\tau_{n-1}:\,X^{\vphi}_{t-}\in A\},\,n\geq 2;\\
	&\xi_n=\xi(X^{\vphi}_{\tau_n-}),\,n\geq1.
\end{aligned}$$
Define a mapping from the admissible control set $\mathcal A$ to the extended real line $[-\infty,\infty]$, as $$\Gamma_2:\mathcal A\to [-\infty,\infty],\qquad\quad \Gamma_2(\vphi)=\limsup_{t\to\infty}\eee_\mu\left[X_t^\vphi\right].$$

Finally, define the composite mapping $\Gamma:\rn\to[-\infty,\infty]$, as
$$
\Gamma:m\overset{\Gamma_1}{\mapsto}\tilde\vphi(m)\overset{\Gamma_2}{\mapsto}\limsup_{t\to\infty}\eee_\mu\left[X^{\tilde\vphi(m)}_t\right].
$$
This is where we utilize the symmetric cost structures. \textcolor{black}{By Lemma \ref{symm} below}, the waiting region $D_m=\rn\setminus A_m$ and the optimal control function $\xi_m$ are symmetric with respect to $m$. Given the symmetry and $L^1$ condition of $\mu$ in Assumption (A1), let $m^*=\eee_{\mu}[X_{0-}]\in\rn$, then by symmetry, $\Gamma(m^*)=\limsup_{t\to\infty}\eee_\mu\left[X^{\tilde\vphi(m^*)}_t\right]=m^*$. Therefore, $(\vphi^*=\tilde\vphi(m^*),m^*)$ is a solution to \eqref{eq:smfg-gen} in the sense of Definition \ref{def:smfg-sol}.
\end{proof}

\textcolor{black}{
\begin{lemma}
\label{symm}
Let (A1)-(A4) hold and let $m \in \rr$ be fixed. Then: 
\begin{itemize}
	\item[(i)] the function $V(\cdot \, | m)$ in (\ref{eq:smfg-gen}) is symmetric with respect to $m$, i.e., $V(m+x|m) = V(m-x|m)$ for each $x \in \rr$; 
	\item[(ii)] the continuation region $D_m = \{x \in \rr: \mm V(x|m) - V(x|m) > 0 \}$ is symmetric with respect to $m$, i.e., $m-x \in D_m$ if and only if $m+x \in D_m$, for each $x \in \rr$;
	\item[(iii)] the optimal impulse function $\xi_m = \arg\min_{\delta \in \rr} \{ V ( \,\cdot \, +\delta) + \phi(\delta)\}$ satisfies $\xi_m(m+x) = -\xi_m(m-x)$, for each $x \in \rr$. As a consequence, if $x,x'$ are symmetric with respect to $m$, then $x+\xi_m(x), x'+\xi_m(x')$ are symmetric as well (i.e., optimal interventions preserve symmetry).
\end{itemize}
\end{lemma}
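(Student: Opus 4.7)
The plan is to set up an explicit reflection argument about the point $m$ and use it to transfer the symmetry assumptions (A1)--(A3) from the data of the problem to $V(\cdot|m)$, $D_m$, and $\xi_m$. Throughout, fix $m \in \rr$ and write $R_m(y) := 2m - y$ for the reflection of $\rr$ about $m$.

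For part (i), I would associate to any admissible strategy $\vphi = (A,\xi) \in \mathcal A$ a reflected strategy $R_m \cdot \vphi := (R_m(A),\, \tilde\xi)$ with $\tilde\xi(y) := -\xi(R_m(y))$. Since $R_m$ is a homeomorphism, $R_m(A)$ is closed and $\tilde\xi$ is continuous, so $R_m\cdot\vphi \in \mathcal A$; clearly $\vphi \mapsto R_m\cdot \vphi$ is an involution. The key point is that, driving the process under $\vphi$ from $m+x$ by $W$ and the process under $R_m \cdot \vphi$ from $m-x$ by $-W$, one obtains two paths related by $X' = R_m(X)$: the hitting times of $A$ by $X$ coincide with those of $R_m(A)$ by $X'$, and at each hitting time the jump size applied to $X'$ is $\tilde\xi(X'_{\tau_n-}) = -\xi(X_{\tau_n-})$. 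Using (A3-2) and the symmetry of $\phi$ in (\ref{eq:sym-ctrl-cost}), the running and intervention costs coincide pathwise; taking expectations (valid since $-W$ and $W$ have the same law by (A1)) and then infima over $\vphi$ gives $V(m+x|m) = V(m-x|m)$.

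Parts (ii) and (iii) then follow by direct computation. From (i) and $\phi(-\delta) = \phi(\delta)$,
\[
\mm V(m+x|m) = \inf_{\delta \in \rr}\{V(m+(x+\delta)|m) + \phi(\delta)\} = \inf_{\delta \in \rr}\{V(m-(x+\delta)|m) + \phi(-\delta)\} = \mm V(m-x|m),
\]
where the last equality uses the substitution $\delta \mapsto -\delta$ and the symmetry of $\phi$. Together with (i) this shows that $\mm V(\cdot|m)-V(\cdot|m)$ is symmetric about $m$, which is (ii). The same substitution applied to the argmin shows that $\delta^*$ minimises the expression defining $\xi_m(m+x)$ iff $-\delta^*$ minimises the one defining $\xi_m(m-x)$; by the uniqueness of the optimal strategy in (A4) we conclude $\xi_m(m-x) = -\xi_m(m+x)$. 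Writing $x = m+y$ and $x' = m-y$ and adding then gives $(x+\xi_m(x)) + (x'+\xi_m(x')) = 2m$, i.e.\ $x+\xi_m(x)$ and $x'+\xi_m(x')$ are symmetric about $m$.

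The main obstacle is the pathwise coupling in part (i): one has to verify carefully that $R_m \circ X$, driven by $-W$, solves the impulse SDE associated with the reflected strategy $R_m \cdot \vphi$ starting from $m-x$, i.e.\ that the hitting times, the prescribed impulses, and the continuous dynamics all align under reflection without interventions being lost or duplicated. Once this bookkeeping is in place, the remainder of the argument reduces to invoking the three explicit symmetries $-W \stackrel{d}{=} W$ in (A1), $\phi(-\xi)=\phi(\xi)$ in (\ref{eq:sym-ctrl-cost}), and $f(m+\delta,m) = f(m-\delta,m)$ in (A3-2).
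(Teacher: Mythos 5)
Your proof is correct, and part (i) takes a genuinely different route from the paper. The paper proves (i) analytically: it invokes the result of \cite{Guo2010} that $V(\cdot\,|m)$ is the \emph{unique} viscosity solution of the QVI $\min\{\mathcal{L}V - rV + f(\cdot,m),\, \mm V - V\}=0$, checks via the standard test-function argument (using $f(m+\delta,m)=f(m-\delta,m)$ and $\phi(-\xi)=\phi(\xi)$) that $x\mapsto V(2m-x|m)$ is also a viscosity solution, and concludes by uniqueness. You instead work directly with the definition of $V$ as an infimum of cost functionals, coupling the controlled path under $\vphi$ started at $m+x$ and driven by $W$ with the controlled path under the reflected strategy $R_m\cdot\vphi$ started at $m-x$ and driven by $-W$; the identity $X'_{\tau}=R_m(X_{\tau-})+\tilde\xi(R_m(X_{\tau-}))=R_m(X_{\tau})$ at each intervention makes the coupling consistent across jumps, the three symmetries (A1), \eqref{eq:sym-ctrl-cost}, (A3-2) give pathwise equality of costs, and the involutivity of $\vphi\mapsto R_m\cdot\vphi$ on the admissible set lets you pass to the infimum. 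What each buys: the paper's argument is shorter but leans on a nontrivial external uniqueness/comparison result for the QVI (and implicitly on $V$ being a continuous viscosity solution); yours is self-contained, purely probabilistic, and in fact proves the stronger statement $J_{\infty}(m+x,\vphi|m)=J_{\infty}(m-x,R_m\cdot\vphi|m)$ for \emph{every} admissible strategy, at the cost of the bookkeeping you flag (which does go through, including preservation of the admissibility/integrability conditions under reflection, since the costs are preserved pathwise). Your parts (ii) and (iii) are essentially identical to the paper's: the substitution $\tilde\delta=-\delta$ combined with (i) and the symmetry of $\phi$.
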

}

\begin{proof}
\color{black}{
(i) By \cite{Guo2010}, the function $V(\,\cdot \, | m)$ is the only viscosity solution to the QVI 
\begin{equation}
\label{visc1}
\min \{ \mathcal{L} V(x|m) - rV(x|m)+ f(x,m), \, \mm V (x|m) - V(x|m) \} = 0, 
\end{equation}
with $\mathcal{L} = \frac{\sigma^2}{2} \frac{d^2}{dx^2}$. We will prove that $x \mapsto V(2m -x | m )$ is a viscosity solution to \eqref{visc1}, so that by uniqueness we get $V(x|m)=V(2m-x|m)$ for each $x \in \rr$, i.e., the claim in (i). To prove that $\tilde V(x) := V(2m-x|m)$ is a viscosity subsolution to \eqref{visc1} (the supersolution argument is similar), let us consider $x_0 \in \rr$ and $\tilde \vphi \in C^2(\rr)$ such that $\tilde V - \tilde \vphi$ has a local maximum at $x_0$ and $(\tilde V - \tilde \vphi)(x_0)=0$. If we set $x_1=2m-x_0$ and  $\vphi(x) := \tilde \vphi(2m- x)$, we see that $V(\,\cdot \, | m) - \vphi$ has a local maximum at $x_1$ and $(V(\,\cdot \, | m)-\vphi)(x_1)=0$. Since $V(\,\cdot \, | m)$ is viscosity subsolution to \eqref{visc1}, we have that
\begin{align*}
0 &\geq \min \big\{ \mathcal{L} \vphi(x_1) - rV(x_1|m) + f(x_1,m), \,\, \mm V (x_1|m) - V(x_1|m) \big\} \\
&= \min \big\{ \mathcal{L} \tilde \vphi(x_0) - r\tilde V(x_0)+ f(x_0,m), \,\, \mm \tilde V (x_0) - \tilde V(x_0) \big\},
\end{align*}
where in the last step we have used the definitions of $\tilde V, \tilde \vphi, x_1$ and the  symmetry properties in \eqref{eq:sym-ctrl-cost} and (A3-2).
	
(ii) For each $x \in \rr$, we have 
\begin{equation}
\begin{aligned}
\label{visc2}
\mm V(m-x|m) &= \inf_{\delta \in \rr} \Big\{V(m-x+\delta|m) + \phi(\delta) \Big\}= \inf_{\tilde \delta \in \rr} \Big\{ V(m-(x+\tilde \delta)|m) + \phi(-\tilde \delta) \Big\} \\
&=\inf_{\tilde \delta \in \rr} \Big\{ V(m+(x+\tilde \delta)|m) + \phi(\tilde \delta) \Big\} =  \mm V(m+x|m)
\end{aligned}
\end{equation}
where in the second equality we have used  the change of variable $\tilde \delta = - \delta$, while in the second-to-last equality we have used (i) and \eqref{eq:sym-ctrl-cost}. The claim in (ii) immediately follows from \eqref{visc2} and (i).

(iii) By arguing as in (ii), for every $x \in \rr$ we have
\begin{equation*}
\xi_m(m+x) = \arg\min_{\delta \in \rr} \{ V (m+x+\delta|m) + \phi(\delta)\} = -\arg\min_{\tilde\delta \in \rr} \{ V (m-x+\tilde\delta|m) + \phi(\tilde\delta)\} = -\xi_m(m-x).
\end{equation*}
}
\end{proof}

\subsection{MFGs vs $N$-player games}
Next,  we will demonstrate that the solution to \eqref{eq:smfg-gen} is an approximation to the corresponding $N$-player game with identical players under the symmetric setting. Here the state process of the $N$-player game on $\rn^N$ is denoted by $\{\mathbf X_t\}_{t\geq0}=\{(X_t^1,\dots,X^N_t)\}_{t\geq0}$, with player $i$ only controlling her own state process $X^i$, $i=1,\dots, N$.

Recall the notations in Section $2$: for player $i$, with $i=1,\dots, N$, $f_i:\rn^N\to\rn$ denotes the running cost, $\phi_i$ denotes the individual cost of control and $\psi_{i,j}$ denotes the cost of control done by another player $j$. To be consistent with the MFGs setting, we assume 
\begin{description}
	\item[(A1$'$)]Player $i$'s uncontrolled state process is given by 
	\begin{equation*}
		dX^i_t=\sigma dW^i_t,\quad X^i_{0-}\sim \mu,
	\end{equation*}
	where $(W^1, \dots, W^N)$ denotes the $N$-dimensional standard Brownian motion, $\sigma>0$ is a constant and $\mu$ is symmetric around its mean.
	\item[(A2$'$)] The cost of individual control $\phi_i$ equals $\phi$ that satisfies Assumption (A2) and the costs of other players' control, $\psi_{i,j}$ are identical for all $i$ and $j$. 
	\item[(A3$'$)] The cost function $f_i$ takes the form of 
	$$\begin{aligned}
	&f_i(\mathbf x)=g\left(x_i-\frac{1}{N}\sum_{j=1}^Nx_j\right),\,\forall \mathbf x\in\rn^N,\\
	&f(x,m)=g(x-m),\,\forall x\in\rn, \end{aligned}$$
for some function $g:\rn\to\rn^+$, $g(x)=g(-x)$,  with $$|g(x)-g(y)|\leq L|x-y|,\,\forall x,y\in\rn,$$ where $L>0$ is the Lipschitz constant.
\end{description} 
Note that the running cost of \eqref{eq:smfg-gen} under Assumption (A3$'$) also satisfies Assumption (A3).
\begin{defn}[\textbf{$\epsilon$-Nash equilibrium}]
A strategy $\vphi^*=(\vphi_1^*,\dots,\vphi_N^*)$ is a called an $\epsilon$-Nash equilibrium to the $N$-player game introduced in Section \ref{sec:DefGames} if $$\eee_{\mu}\left[J^i(\mathbf X_{0-};\vphi^*)\right]\leq \eee_{\mu}\left[J^i(\mathbf X_{0-};(\vphi^{*,-i},\vphi_i))\right]+\epsilon,\quad \forall \vphi_i\in\Phi_i(\mathbf X_{0-})\text{ s.t. }(\vphi^{*,-i},\vphi_i)\in\Phi(\mathbf X_{0-}),$$
where $X_{0-}^i\overset{i.i.d.}{\sim}\mu$, $i=1,\dots, N$.
\end{defn}

Let $(\tilde{\vphi}^*,m^*)$ be a solution to the \eqref{eq:smfg-gen} under Assumptions (A1)-(A4), where $m^*$ is the expectation of initial distribution $\mu$, the control policy $\tilde{\vphi}^*$ is characterized by action region $\tilde A^*$ and impulse function $\tilde \xi^*:\rn\to\rn$; denote the corresponding waiting region as $\tilde D^*$ and the state process on $\rn$ under $\tilde \vphi^*$ as $\tilde X$. As illustrated in Theorem \ref{thm:sys}, the waiting region $\tilde D^*$ as well as the impulse function $\tilde\xi^*$ will be symmetric around $m^*$. Define the following priority sets
$$P_i=\{\mathbf x\in\rn^N:|x_i-m^*|>|x_j-m^*|\hspace{1pt},\,\forall j>i;\,{\hspace{1pt}}|x_i-m^*|\geq|x_k-m^*|,\,\forall k>i\},\hspace{1pt}\quad\forall i\in\{1,\dots, N\}.$$
Then for $i=1,\dots, N$, define the action region for Player $i$ as $A_i^*=\{\mathbf x\in\rn^N:x_i\in\tilde A^*\}\cap P_i$ and her impulse function $\xi_i^*:\rn^N\to\rn$ such that $$\xi_i^*(\mathbf x)=\tilde\xi^*(x_i),\,\forall \mathbf x\in\rn^N.$$
Denote $\vphi_i^*=(A_i^*,\xi_i^*)$ and $\vphi^*=(\vphi_1^*,\dots,\vphi_N^*)$. 
\begin{thm}
\label{thm:eps}
Let Assumptions (A1-A3) and (A1$'$-A3$'$) hold. Suppose that, under $\tilde\vphi^*$, we have $\tilde D^*\subset[m^*-u^*,m^*+u^*]$ for some positive constant $u^*$ and that $\tilde X^*_t\in\tilde D^*$ almost surely for all $t\geq0$. Then $\vphi^*$ is an $\epsilon$-NE for the $N$-player cash management game introduced in Section \ref{2-player} for generic $N$, with $\epsilon=O\left(\frac{1}{\sqrt{N}}\right)$.
\end{thm}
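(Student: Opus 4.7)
The plan is to sandwich the $N$-player cost between the corresponding MFG value and a small error, and then close the gap using the MFG optimality of $\tilde \vphi^*$. Specifically, for any admissible deviation $\vphi_i$, I would decompose
\begin{equation*}
\mathbb E_\mu[J^i(\mathbf X_{0-};\vphi^*)] - \mathbb E_\mu[J^i(\mathbf X_{0-};(\vphi^{*,-i},\vphi_i))] = \Delta_1 + \Delta_2 + \Delta_3,
\end{equation*}
where $\Delta_2 := \mathbb E_\mu[J_\infty(X^i_{0-},\tilde \vphi^*|m^*) - J_\infty(X^i_{0-},\vphi_i|m^*)]$ is a single-player MFG difference, which is $\leq 0$ by Theorem \ref{thm:sys} and Assumption (A4), and $\Delta_1,\Delta_3$ are the ``$N$-player vs.~MFG'' errors at the equilibrium and at the deviation, respectively. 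The statement will follow once I establish $\Delta_1,\Delta_3 = O(N^{-1/2})$.

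\textbf{Cancellations.} Two structural observations will make $\Delta_1$ and $\Delta_3$ tractable. First, under (A1$'$) the processes $(X^j)_{j\neq i}$ are driven by the independent $W^j$'s and by impulses coming only from players $j$ themselves, and all of them use $\tilde \vphi^*$ in both $\vphi^*$ and $(\vphi^{*,-i},\vphi_i)$; hence the opponents' intervention data $\{(\tau_{j,k},\xi_{j,k})\}_{j\neq i,k}$ and the associated cost $\sum_{j\neq i}\sum_k e^{-r\tau_{j,k}}\psi_{i,j}(\xi_{j,k})$ are pathwise identical in the two $N$-player games and play no role in $\Delta_1+\Delta_3$. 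Second, by the continuity and independence of the $W^j$'s, almost surely no two components simultaneously touch $\partial \tilde D^*$, so the priority sets $P_i$ are invoked only on a $\mathbb P$-null event; consequently player $i$'s intervention times and impulses under $\vphi^*$ coincide a.s.~with those produced by a single-player MFG controlled by $\tilde \vphi^*$, and the intervention cost $\sum_k e^{-r\tau^*_{i,k}}\phi(\xi^*_{i,k})$ cancels exactly in $\Delta_1$; an analogous argument handles $\Delta_3$.

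\textbf{Main quantitative estimate.} After these cancellations, the only surviving discrepancy is on the running cost. By the Lipschitz property in (A3$'$),
\begin{equation*}
|\Delta_1| = \Big| \mathbb E_\mu \Big[ \int_0^\infty e^{-rt} \big( g(X^i_t-\bar X_t) - g(X^i_t-m^*) \big) \, dt \Big] \Big| \leq \frac{L}{r} \sup_{t\geq 0} \mathbb E_\mu[ |\bar X_t - m^*| ].
\end{equation*}
Under $\vphi^*$ the $X^j$ are i.i.d.~copies of $\tilde X^*$, and by Lemma \ref{symm} together with the symmetry of $\mu$ around $m^*$ one has $\mathbb E[X^j_t]=m^*$; combined with the uniform bound $|X^j_t-m^*| \leq u^*$, Cauchy--Schwarz gives $\mathbb E_\mu[|\bar X_t - m^*|] \leq u^*/\sqrt N$, and thus $|\Delta_1| \leq Lu^*/(r\sqrt N)$. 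For $\Delta_3$ I would write $\bar X_t - m^* = \frac{1}{N}(X^{i,\vphi_i}_t - m^*) + \frac{N-1}{N}(\bar X^{-i}_t - m^*)$: the second piece is again $O(N^{-1/2})$ by the same i.i.d.~argument applied to the $N-1$ non-deviating players.

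\textbf{Main obstacle.} The chief technical difficulty is to control the first piece above, $\frac{1}{N}\mathbb E_\mu[|X^{i,\vphi_i}_t - m^*|]$, uniformly in $t$ over arbitrary admissible deviations, since \eqref{L1process} alone provides only a $\vphi_i$-dependent moment bound. The cleanest way around this is to restrict attention to deviations satisfying $\mathbb E_\mu[J^i(\mathbf X_{0-};(\vphi^{*,-i},\vphi_i))] \leq \mathbb E_\mu[J^i(\mathbf X_{0-};\vphi^*)]$, since otherwise the $\epsilon$-NE inequality is vacuous; the coercivity of $\phi$ at infinity in (A2) together with $g \geq 0$ from (A3$'$) then translates this cost bound into a deviation-independent uniform-in-$t$ moment bound on $X^{i,\vphi_i}$, making the first piece $O(1/N)$ and yielding $|\Delta_3| = O(N^{-1/2})$. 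Combining with $\Delta_2 \leq 0$ and the bound on $\Delta_1$ gives $\epsilon = O(N^{-1/2})$.
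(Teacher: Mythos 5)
Your proposal is correct and follows essentially the same route as the paper: both arguments reduce the comparison to the running-cost discrepancy $|f_i(\mathbf X_t)-f(X_t^i,m^*)|\leq L|\bar X_t-m^*|$, bound the non-deviating players' contribution by $O(1/\sqrt N)$ via the i.i.d.\ structure, the a.s.\ confinement of $\tilde X^*$ to $[m^*-u^*,m^*+u^*]$ and Cauchy--Schwarz, restrict to deviations with a uniform moment bound to handle the $\frac1N|X^i_t-m^*|$ term, and close the gap with the MFG optimality (your $\Delta_2\le 0$ is the paper's step $\geq V(\mu)$, and your cancellation of the $\psi_{i,j}$ terms is used identically there). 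Your justification of the moment restriction via coercivity, and your observation that the priority sets $P_i$ only matter on a null event, make explicit two points the paper passes over with a ``without loss of generality'' and by implicit assumption, respectively.
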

\begin{proof}
Fix $i\in\{1,\dots,N\}$. Consider $\bar{\vphi}=(\vphi^{*,-i},\vphi_i)$ such that $\vphi_i\in\Phi_i(\mathbf x)$ and $\bar{\vphi}\in\Phi(\mathbf x)$. For $j\neq i$, $\bar{\vphi}_j=\vphi_j^*$ whose action region independent from the strategy of player $i$. 

We first look at the running cost. 
$$X^i_t-\frac{1}{N}\sum_{j=1}^NX^j_t=\left(X^i_t-m^*\right)-\frac{1}{N}\left(X^i_t-m^*\right)-\frac{\sum_{j\neq i}(X^j_t-m^*)}{N},$$
so that
$$\begin{aligned}
\left|f_i(\mathbf X_t)-f(X^i_t,m^*)\right|&=\left|g\left(X^i_t-\frac{1}{N}\sum_{j=1}^NX^j_t\right)-g\left(X^i_t-m^*\right)\right|\\
&\leq L\left(\frac{1}{N}|X^i_t-m^*|+\frac{\sum_{j\neq i}|X^j_t-m^*|}{N-1}\right).
\end{aligned}$$

Note that $$\left|\frac{\sum_{j\neq i}(X^j_t-m^*)}{N-1}\right|\leq u^*$$ and by the i.i.d. assumption,
$$\eee_\mu\left|\frac{\sum_{j\neq i}(X^j_t-m^*)}{N-1}\right|\leq \left(\eee_\mu\left|\frac{\sum_{j\neq i}(X^j_t-m^*)}{N-1}\right|^2\right)^{\frac{1}{2}}\Rightarrow \eee_\mu\left|\frac{\sum_{j\neq i}(X^j_t-m^*)}{N-1}\right|=O\left(\frac{1}{\sqrt N}\right).$$
Without loss of generality, let us consider $\vphi_i$ such that $$\eee_\mu\left[\int_0^\infty e^{-rt}L|X^i_t-m^*|dt\right]<M,$$ for some sufficiently large $M>0$. Then
\begin{align*}
&\left|\eee_\mu\left[\int_0^\infty e^{-rt}f_i(\mathbf X_t)dt\right]-\eee_\mu\left[\int_0^\infty e^{-rt}f(X^i_t,m^*)dt\right]\right|\\
&\qquad\quad\leq\frac{1}{N}\eee_\mu\left[\int_0^\infty e^{-rt}L|X^i_t-m^*|dt\right]+\eee_\mu\left[\int_0^\infty e^{-rt}L\left|\frac{\sum_{j\neq i}(X^j_t-m^*)}{N-1}\right|dt\right]\\
&\qquad\quad=\frac{1}{N}\eee_\mu\left[\int_0^\infty e^{-rt}L|X^i_t-m^*|dt\right]+\int_0^\infty e^{-rt}L\eee_\mu\left[\left|\frac{\sum_{j\neq i}(X^j_t-m^*)}{N-1}\right|\right]dt \quad (\text{Fubini's})\\
&\qquad\quad=O\left(\frac{1}{N}\right)+O\left(\frac{1}{\sqrt N}\right)\\
&\Rightarrow \eee_\mu\left[\int_0^\infty e^{-rt}f_i(\mathbf X_t)dt\right]=\eee_\mu\left[\int_0^\infty e^{-rt}f(X^i_t,m^*)dt\right]+  O\left(\frac{1}{\sqrt N}\right)
\end{align*}
and therefore
\begin{empheq}{align*}
&\eee_\mu\left[J^i(\mathbf X_{0-};\bar{\vphi})\right]\\
&=\eee_\mu\left[\int_0^\infty e^{-rt}f_i(\mathbf X_t)dt+\sum_{n\geq1}e^{-r\tau_{i,n}}\phi(\xi_{i,n})+\sum_{j\neq i}\sum_{n\geq 1}e^{-r\tau_{j,n}}\psi_{i,j}(\xi_{j,n}^*)\right]\\
&=\eee_\mu\left[\int_0^\infty e^{-rt}f(X^i_t,m^*)dt+\sum_{n\geq1}e^{-r\tau_{i,n}}\phi(\xi_{i,n})\right]+\eee_\mu\left[\sum_{j\neq i}\sum_{n\geq 1}e^{-r\tau_{j,n}}\psi_{i,j}(\xi_{j,n}^*)\right]+O\left(\frac{1}{\sqrt{N}}\right)\\
&\geq V(\mu) +\eee_\mu\left[\sum_{j\neq i}\sum_{n\geq 1}e^{-r\tau_{j,n}}\psi_{i,j}(\xi_{j,n}^*)\right]+O\left(\frac{1}{\sqrt{N}}\right)\\
&=\eee_\mu\left[J^i(\mathbf X_{0-};\vphi^*)+O\left(\frac{1}{\sqrt{N}}\right)\right],
\end{empheq}
where we have denoted by $V(\mu)$ the payoff of the impulse MFG with initial distribution $\mu$.
\end{proof}
}

\begin{remark}
Among the assumption of Theorem \ref{thm:eps}, we ask that $\tilde D^*\subset[m^*-u^*,m^*+u^*]$ is bounded. Heuristically, if $f(x,m) \to +\infty$ as $x \to \infty$ and diverges at a greater rate than the intervention costs, we expect that, for $|x|$ big enough, intervening is cheaper than keeping the state as it is (in other words, we expect that the continuation region is bounded). \textcolor{black}{For examples of bounded continuation regions in impulse control theory, see e.g.~the references in Remark \ref{remonA4}, i.e.~\cite{basei}, \cite{CadenillasLaknerPinedo}, \cite{CadenillasZapatero}, \cite{Korn}, \cite{MitchellFengMuthuraman}, \cite{MundacaOksendal}. For a two-player stochastic impulse game with bounded continuation regions, see \cite{FerrariKoch}. In Section \ref{ssec:explicit} below, we will provide an example of impulse MFG where this condition is satisfied.}
\end{remark}

\subsection{Explicit Solutions: MFGs for Cash Management Problems}
\label{ssec:explicit} 
In this section, we explicitly solve the MFGs cash management problems, i.e., a (slightly more general) mean-field counterpart to the two-player game in Section \ref{2-player}. 

\begin{description}
\item[(A1$''$)]The uncontrolled dynamics follow{}
\begin{equation}
dX_t=\sigma dW_t,\quad X_{0-}\sim \mu,{}
\end{equation}
where $\sigma>0$ is a constant.
\item[(A2$''$)]The cost of control satisfies
\begin{equation}
\phi(\xi)=\begin{cases}
K^++k^+\xi,&\xi\geq0,\\
K^--k^-\xi, &\xi<0,
\end{cases}
\end{equation}
where $K^\pm,k^\pm>0$.
\item[(A3$''$)]The running cost takes the form 
$$f(x,m)=C\left(x-\alpha(m)\right),$$
where $C:\rr \to \rr$ is defined by
\begin{equation}
C(x)=\max\{hx,-px\}, \qquad\quad x \in \rr,
\end{equation}
with parameters $h,p >0$ satisfying
$$ h-k^+r>0,\quad p-k^-r>0, $$
and where $\alpha:\rn\to\rn$ is a contraction, i.e., 
$$|\alpha(x) - \alpha(y)| \leq  k |x-y|, \qquad\quad 0 < k < 1, \qquad x,y \in \rr.$$
\end{description}
Here the function $\alpha$ can be interpreted as target level depending on the mean information $m$. The assumption that $\alpha$ is a contraction mapping is for analytical tractability.
\begin{thm}\label{thm:smfg-cash}
Under Assumptions (A1--A4), \eqref{eq:smfg-gen} admits a unique analytical solution in the sense of Definition \ref{def:smfg-sol}.
\end{thm}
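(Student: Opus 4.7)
The plan is to follow the three-step architecture of Theorem \ref{thm:sys} but push through to closed form, exploiting that the mean information $m$ enters the problem only through $\alpha(m)$ in the running cost. For a fixed $m \in \rr$, the change of variable $y = x - \alpha(m)$ reduces the single-player problem to an $m$-independent impulse control problem: writing $V(x|m) = w(y)$ with $y = x-\alpha(m)$, the QVI to solve is
$$\min\Big\{\tfrac{\sigma^2}{2}w''(y) - rw(y) + C(y),\; \mm w(y) - w(y)\Big\} = 0, \qquad y \in \rr,$$
which is exactly the asymmetric Constantinides--Harrison--Taksar cash-management problem. I expect the optimal strategy in the $y$-variable to be a two-sided band with action region $(-\infty, a]\cup[b,\infty)$ and jump targets $d_0, D_0$ satisfying $a < d_0 \leq D_0 < b$, so the $m$-indexed optimal strategy is $\tilde\vphi(m) = (A_m, \xi_m)$ with $A_m = \{x : x-\alpha(m) \notin (a,b)\}$ and $\xi_m(x) = \alpha(m) + d_0 - x$ on $\{x-\alpha(m) \leq a\}$, $\xi_m(x) = \alpha(m) + D_0 - x$ on $\{x-\alpha(m)\geq b\}$.

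The first substantive step is to construct $w$ explicitly and verify the QVI. On $[a,0]$ and $[0,b]$, $w$ solves a linear ODE with piecewise-linear inhomogeneous term, so
$$w(y) = \begin{cases} -py/r + c_1^- e^{\lambda y} + c_2^- e^{-\lambda y}, & y \in [a,0], \\ hy/r + c_1^+ e^{\lambda y} + c_2^+ e^{-\lambda y}, & y \in [0,b],\end{cases} \qquad \lambda := \sqrt{2r}/\sigma,$$
extended outside $[a,b]$ by $w(y) = w(d_0) + \phi(d_0 - y)$ for $y \leq a$ and $w(y) = w(D_0) + \phi(D_0 - y)$ for $y \geq b$. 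The eight unknowns $(a, d_0, D_0, b, c_1^\pm, c_2^\pm)$ are determined by: $C^0$ and $C^1$ matching at $0$; the continuous-fit conditions $w(a) = w(d_0) + K^+ + k^+(d_0-a)$ and $w(b) = w(D_0) + K^- + k^-(b-D_0)$; the smooth-fit conditions $w'(a) = -k^+$ and $w'(b) = k^-$; and the interior first-order optimality conditions $w'(d_0) = -k^+$ and $w'(D_0) = k^-$. The standing conditions $h > k^+ r$ and $p > k^- r$ ensure that $C(y)$ grows fast enough relative to the intervention cost for the continuation region to be bounded, which is precisely the scenario covered by the monotonicity/convexity argument of \cite{Constantinides1978}; the global inequality $\mm w \geq w$ is verified by comparing $w(y+\delta)+\phi(\delta)$ with $w(y)$ using the smooth-fit conditions at the targets.

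For the closing step, note that under $\tilde\vphi(m)$ the controlled process takes values in $[\alpha(m)+a, \alpha(m)+b]$ and the centred process $Y_t := X_t^{\tilde\vphi(m)} - \alpha(m)$ is a regenerative diffusion on $[a,b]$ whose law is independent of $m$. Standard ergodic theory for such one-dimensional diffusions with boundary jumps yields a unique stationary distribution $\pi$ on $[a,b]$, and hence
$$\limsup_{t \to \infty} \eee_\mu[X_t^{\tilde\vphi(m)}] = \alpha(m) + \bar c, \qquad \bar c := \int_a^b y\,\pi(dy),$$
so the fixed-point equation reduces to $m^* = \alpha(m^*) + \bar c$. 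Since $\alpha$ is a $k$-contraction with $k<1$, the map $m \mapsto \alpha(m) + \bar c$ is itself a strict contraction on $\rr$, and the Banach fixed-point theorem delivers a unique $m^*$. Uniqueness of the MFG solution then follows from (A4), since any solution $(\tilde\vphi^*, m^*)$ must have $\tilde\vphi^* = \tilde\vphi(m^*)$ and $m^*$ must solve the contraction equation.

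The main obstacle is the explicit solvability in step two: proving that the eight matching and optimality equations have a unique root with the correct ordering $a < d_0 \leq D_0 < b$, and verifying the nonlocal inequality $\mm w - w \geq 0$ globally on $(a,b)$. The asymmetry $h\neq p$, $k^+\neq k^-$, $K^+\neq K^-$ precludes the symmetry shortcut used in Section \ref{2-player} and forces a more delicate convexity analysis of $w$, extending the Constantinides argument to the genuinely two-sided asymmetric cost structure.
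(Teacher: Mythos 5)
Your proposal follows the same overall architecture as the paper's proof: reduce the fixed-$m$ problem to an $m$-independent one via the shift $y=x-\alpha(m)$, solve it explicitly as an asymmetric two-sided band policy with continuous-fit, smooth-fit and interior first-order conditions (deferring, as the paper also does, the existence/uniqueness of the root of that algebraic system and the global verification of $\mm w - w\geq 0$ to the Constantinides-type analysis), observe that the updated mean has the form $\alpha(m)+\bar c$ with $\bar c$ independent of $m$ and of the initial distribution, and close with Banach's fixed-point theorem using the contraction property of $\alpha$. The one genuine divergence is in how $\bar c$ is computed. The paper evaluates $\limsup_{t\to\infty}\eee_\mu[X_t]$ along the embedded Markov chain of post-intervention states, whose two-point stationary law on $\{\alpha(m)+D,\alpha(m)+U\}$ yields the closed form $\bar c=\frac{uD-dU}{u-U+D-d}$; you instead take the mean $\int y\,\pi(dy)$ of the stationary distribution of the continuous-time controlled process. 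These are different quantities in the asymmetric case (e.g.\ for $d=-1$, $D=0$, $U=1$, $u=3$ the embedded-chain value is $1/3$ while the stationary mean of the continuous-time process, whose density is piecewise linear and flat on $[D,U]$, is $4/5$), and since $\eee_\mu[X_t]$ converges to the stationary mean by ergodicity and boundedness of the band, your computation is arguably the more faithful evaluation of $\limsup_{t\to\infty}\eee_\mu[X_t]$, whereas the paper's identification of this limit with $\limsup_n\eee_\mu[X_{\tau_n^*}]$ (justified only by ``symmetry and a Fubini argument'') is exact only in the symmetric case $d=-u$, $D=-U$. What the paper's route buys is a fully explicit fixed-point equation; what your route buys is correctness of the constant without that identification. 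Crucially, both routes produce a map $m\mapsto\alpha(m)+\bar c$ that inherits the contraction constant of $\alpha$, so existence and uniqueness of $m^*$ — and hence the statement of the theorem — hold identically under either computation, and your final appeal to (A4) for uniqueness of the strategy given $m^*$ matches the paper's logic.
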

\begin{proof}
We will in fact explicitly derive the solution. Let us fix $m\in\rn$. Then the corresponding QVI for the control problem is
\begin{equation}\label{eq:qvi-stat-mfg}
\min\left\{\na V-rV+C(x-\alpha(m)),\m V-V\right\}=0.
\end{equation}
Similar to \cite{Constantinides1978}, one can  find an optimal policy characterized by the vector $(d,D, U,u)$ with $d<D<0<U<u$, by smooth-fit principle. The payoff corresponding to \ref{thm:smfg-cash} has to satisfy (we set $\lambda = \frac{\sqrt{2r\sigma^2}}{\sigma^2}$)
\begin{empheq}[left=\empheqlbrace]{align}
&V(x)=\begin{cases}
V\left(u+\alpha(m)\right)-k^-\left(u-x+\alpha(m)\right),&x-\alpha(m)\geq u,\\
\vspace{1pt}\\
\frac{h}{r}\left(x-\alpha(m)\right)+c_1\exp\left\{\lambda\left(x-\alpha(m)\right)\right\}\\
\hspace{45pt}+c_2\exp\left\{-\lambda\left(x-\alpha(m)\right)\right\},&0\leq x-\alpha(m)\leq u,\\
\vspace{1pt}\\
-\frac{p}{r}\left(x-\alpha(m)\right)+\left(c_1+\frac{h+p}{2r\lambda}\right)\exp\left\{\lambda\left(x-\alpha(m)\right)\right\}\\
\hspace{45pt}+\left(c_2-\frac{h+p}{2r\lambda}\right)\exp\left\{-\lambda\left(x-\alpha(m)\right)\right\},&d\leq x-\alpha(m)\leq 0;\\
\vspace{1pt}\\
V\left(d+\alpha(m)\right)+k^+\left(d-x+\alpha(m)\right),&x-\alpha(m)\leq d;
\end{cases}\label{eq:mfg-reg-sym}\\
&\dot{V}\left(U+\alpha(m)\right)=\dot{V}\left(u+\alpha(m)\right)=k^-,\quad\dot{V}\left(D+\alpha(m)\right)=\dot{V}\left(d+\alpha(m)\right)=-k^-; \label{eq:mfg-ctrl-1st-sym}\\
&
\begin{aligned}V\left(u+\alpha(m)\right)=K^-+k^-(u-U)+V\left(U+\alpha(m)\right), \\
V\left(d+\alpha(m)\right)=K^++k^+(D-d)+V\left(D+\alpha(m)\right).\end{aligned}\label{eq:mfg-ctrl-0th-sym}
\end{empheq}
Recall that $K^\pm,k^\pm>0$ and $h-rk^-,p-rk^+>0$. By \cite{Constantinides1978}, there exists a 6-tuple $(c_1,c_2,d,D,U,u)$ satisfying $\eqref{eq:mfg-reg-sym}$, $\eqref{eq:mfg-ctrl-1st-sym}$ and $\eqref{eq:mfg-ctrl-0th-sym}$ such that $d<D<0<U<u$ and
	\begin{empheq}[left=\empheqlbrace]{align}
	&c_1= \frac{h+p}{r\lambda}\frac{(e^{-\lambda u}-e^{-\lambda U})[\cosh(\lambda d)-\cosh(\lambda D)]}{(e^{\lambda u}-e^{\lambda U})(e^{-\lambda d}-e^{-\lambda D})-(e^{-\lambda u}-e^{-\lambda U})(e^{\lambda d}-e^{\lambda D})}
	\in(-\frac{h+p}{2r\lambda},0),\\
	&c_2=c_1\frac{e^{\lambda u}-e^{\lambda U}}{e^{-\lambda u}-e^{-\lambda U}}\in\left(0,\frac{h+p}{2r\lambda}\right),\\
	&K^--\left(\frac{h}{r}-k^-\right)(u-U)-2c_1(e^{\lambda u}-e^{\lambda U})=0,\\
	&\lambda \left(c_1e^{\lambda u}-c_2e^{-\lambda u}\right)+\left(\frac{h}{r}-k^-\right)=0,\\
	&K^+-\left(\frac{p}{r}-k^+\right)(D-d)-2\left(c_1+\frac{h+p}{2r\lambda}\right)(e^{\lambda u}-e^{\lambda U})=0,\\
	&\lambda \left[\left(c_1+\frac{h+p}{2r\lambda}\right)e^{\lambda u}-\left(c_2-\frac{h+p}{2r\lambda}\right)e^{-\lambda u}\right]-\left(\frac{p}{r}-k^+\right)=0,
	\end{empheq}
where the  thresholds $d, D, u, U$ only depend on $K^\pm, k^\pm, h, p, r, \sigma$.
The optimal simple control policy $\vphi^{*} =\left((-\infty,\alpha(m)+d]\cup[\alpha(m)+u,+\infty),\xi^*\right)= \{(\tau_{n}^*,\xi_{n}^*)\}_{n\geq 1}$ is given by 
\begin{equation}\label{eq:mfg-ctrl}
\begin{aligned}
\xi^*(x)&=\begin{cases}
U-x+\alpha(m), &\text{if } x-\alpha(m)\geq \alpha(m)+u,\\
D-x+\alpha(m), &\text{if } x-\alpha(m)\leq \alpha(m)+d,\\
0,&\text{otherwise}.
\end{cases}\\
\tau_{1}^*&=\inf\{t\geq 0:|X_{t-}-\alpha(m)|\not\in(\alpha(m)+d,\alpha(m)+u)\}, \\
\tau_{n}^*&=\inf\{t>\tau_{n-1}^*:|X_{t-}-\alpha(m)|\not\in(\alpha(m)+d,\alpha(m)+u)\},\quad n\geq 2;\\
\xi_n^{*}&=\xi^*(X_{\tau_n-})
\end{aligned}
\end{equation}
Assume that the initial position $X_{0-}$ follows any given distribution $\mu$. Recall from \eqref{eq:smfg-gen} that 
\begin{align*}
V(x)&=\inf_{\vphi}\eee_x\left[\int_{0}^{\infty}e^{-rt}f(X_t,m)dt+\sum_{n\geq 1}e^{-r\tau_{n}}\phi(\xi_{n})\right]\\
&=\inf_{\vphi}\eee_{\mu}\left[\int_{0}^{\infty}e^{-rt}f(X_t,m)dt+\sum_{n\geq 1}e^{-r\tau_{n}}\phi(\xi_{n})\middle\vert X_{0-}=x\right].
\end{align*}

Denote the updated mean information as $\bar{m}=\limsup_{t\to\infty}\eee_x\left[{X}_{t}\right]$. We will show that this $\bar{m}$ is well-defined and invariant with respect to $x$. 

Notice that $\bar{m}=\lim\sup_{t\to\infty}\eee_x[{X}_{t}]=\lim\sup_{n\to\infty}\eee_x[X_{\tau_{n}^*}]$ by symmetry and a Fubini argument. Note that $\eee_x[X_{\tau_{n}^*}]=\alpha(m)+U\pbm\{X_{\tau_{n}^*}=\alpha(m)+U\}+D\left[1-\pbm\{X_{\tau_{n}^*}=\alpha(m)+U\}\right]$. For simplification, denote $\pbm\{X_{\tau_{n}^*}=\alpha(m)+U|X_{0-}=x\}$ as $p_n(x)$. Then, by the strong Markovian property of $\bar{X}_t$
\begin{align*}
&q_1\equiv\pbm\{X_{\tau_{n+1}^*}=\alpha(m)+U|X_{\tau_{n}^*}=\alpha(m)+U\}=\frac{U-d}{u-d}, \forall n\in\nn,\\
&q_2\equiv\pbm\{X_{\tau_{n+1}^*}=\alpha(m)+U|X_{\tau_{n}^*}=\alpha(m)+D\}=\frac{D-d}{u-d}, \forall n\in\nn,\\
&p_{n+1}(x)=p_n(x)q_1+\left[1-p_n(x)\right]q_2.
\end{align*}
Therefore, we have $$p_{n+1}(x)=q_2+(q_1-q_2)p_n(x)\Rightarrow p_n(x)-\frac{q_2}{1-q_1+q_2}=(q_1-q_2)^{n-1}\left[p_1(x)-\frac{q_2}{1-q_1+q_2}\right].$$ Hence, $\lim_{n\to\infty}p_n(x)=\frac{q_2}{1-q_1+q_2}$ and this is independent of the initial position $x$. We then have $\bar{m}=\alpha(m)+\frac{uD-dU}{u-U+D-d}$. Define the update of mean information $\Gamma:m\mapsto\bar{m}$ as $\Gamma(m)=\alpha(m)+\frac{uD-dU}{u-U+D-d}$. Since $\alpha$ is assumed a contraction mapping, so is $\Gamma$. Denote the fixed point of $\Gamma$ as $m^*$ and let $\vphi^*=\vphi(m^*)$ be be as in \eqref{eq:mfg-ctrl}. Then $(\vphi^*,m^*)$ is a solution to the (\ref{eq:smfg-gen}) in the sense of Definition \ref{def:smfg-sol}.
\end{proof}

\begin{remark}
Notice that in the example of Section \ref{2-player}, the cash management setting is under a symmetric cost structure with $h=p$, $K^{\pm}=K$, $k^\pm = k$. Its mean-field counterpart, by considering a similar derivation as in the proof of Theorem \ref{thm:smfg-cash}, has a symmetric solution structure with $d=-u$ and $D=-U$. Therefore $\frac{uD-dU}{u-U+D-d}=0$, and  $m^*=\eee_{\mu}[X_{0-}]$ is a solution. 
\end{remark}

\section{Sensitivity Analysis}

\textcolor{black}{In this section, we come back to the \emph{symmetric} cash management problem, and compare the solutions in the cases $N=1$ (`monopoly', described in \cite{Constantinides1978} and here recalled in Section 1) and $N=2$ (`duopoly', introduced in Section \ref{2-player} as the multi-player extension of the problem in \cite{Constantinides1978}).} 

Namely, we want to study see how  parameters $h$, $K$, $k$, $r$ and $\sigma$ influence the control policies and the thresholds $d$, $D$, $U$, $u$, we conduct a series of sensitivity analysis. We start with $h=2$, $K=3$, $k=1$, $r=0.5$, $\sigma = \frac{\sqrt{2}}{2}$ and $c = 1$.

We shall see similar behaviors for both the monopoly and the duopoly cases in terms of the thresholds and policy changes with respect to the underlying parameter changes.  One distinction is that  the thresholds and policy changes are more sensitive to parameter changes in the duopoly case due to competition. 

Finally, we will study the sensitivity analysis for the MFG counterpart.

\subsection{Duopoly vs Monopoly.}
Putting the thresholds for the duopoly and those of the monopoly together in Figure \ref{fig:2v1-ctrl}, one can see that due to competition in a game setting, players take the opponents' strategies into consideration. 

\textcolor{black}{We notice that the continuation region gets bigger in the duopoly case. Equivalently, interventions on the underlying process are less frequent in the duopoly case than in the monopoly case. Also, notice that the intervention size when the process reaches the lower or upper threshold are equal, due to the symmetric structure of the problem. In summary, the player of the monopoly makes frequent but cautiou interventions whereas each player in the duopoly intervenes less often but each time with bolder moves.}

\begin{figure}[H]
\centering
\includegraphics[width=\textwidth]{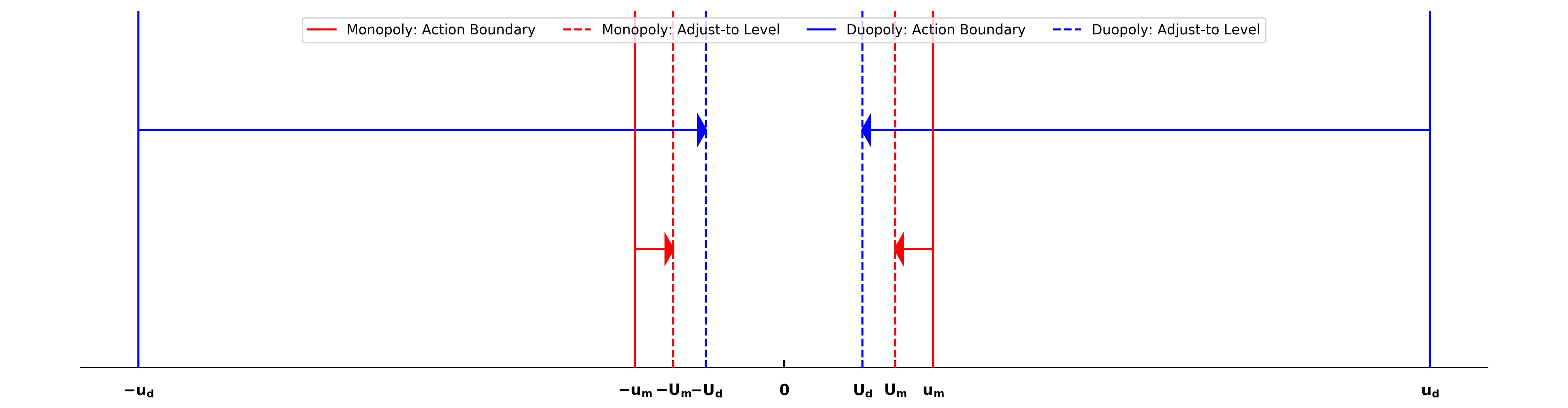}
\caption{Thresholds: Duopoly vs. Monopoly}
\label{fig:2v1-ctrl}
\end{figure}

\paragraph{Running Cost $h$.} When the  running cost $h$ increases, players have the incentive to intervene more frequently to prevent controlled process from deviating too far away from the target level. 
See Figure \ref{subfig:2v1-act-h}. On the other hand, the presence of the cost of control makes players more cautious when exercising controls.  Thus, an increased running cost encourages the players to intervene more frequently but with smaller amount of adjustment. See Figure \ref{subfig:2v1-jump-h}.

\begin{figure}[H]
\centering
\begin{subfigure}[b]{0.4\textwidth}
\centering
\includegraphics[width=\textwidth]{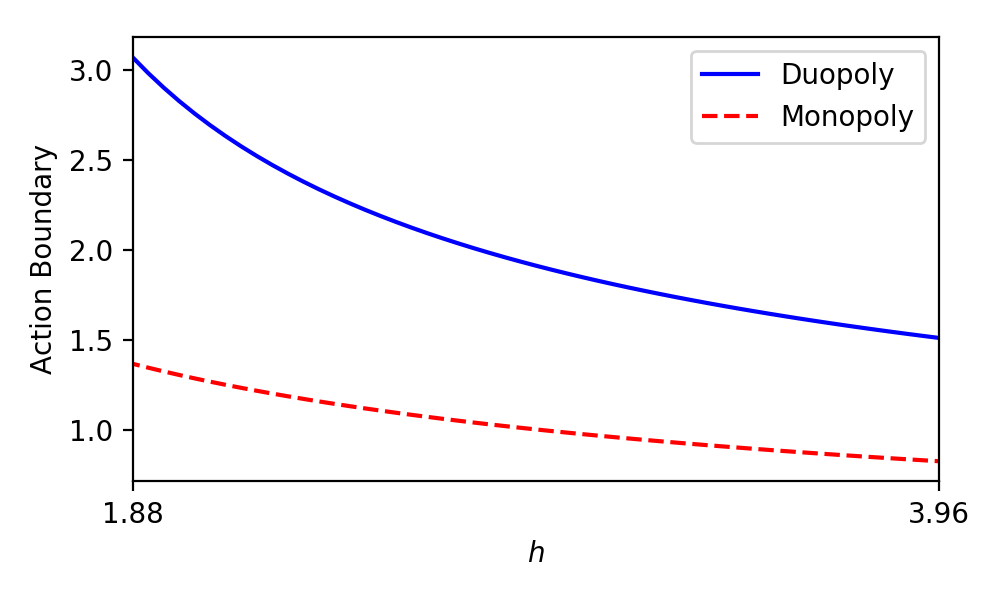}
\caption{Action Boundary of Monopoly and Duopoly}
\label{subfig:2v1-act-h}
\end{subfigure}
\begin{subfigure}[b]{0.4\textwidth}
\centering
\includegraphics[width=\textwidth]{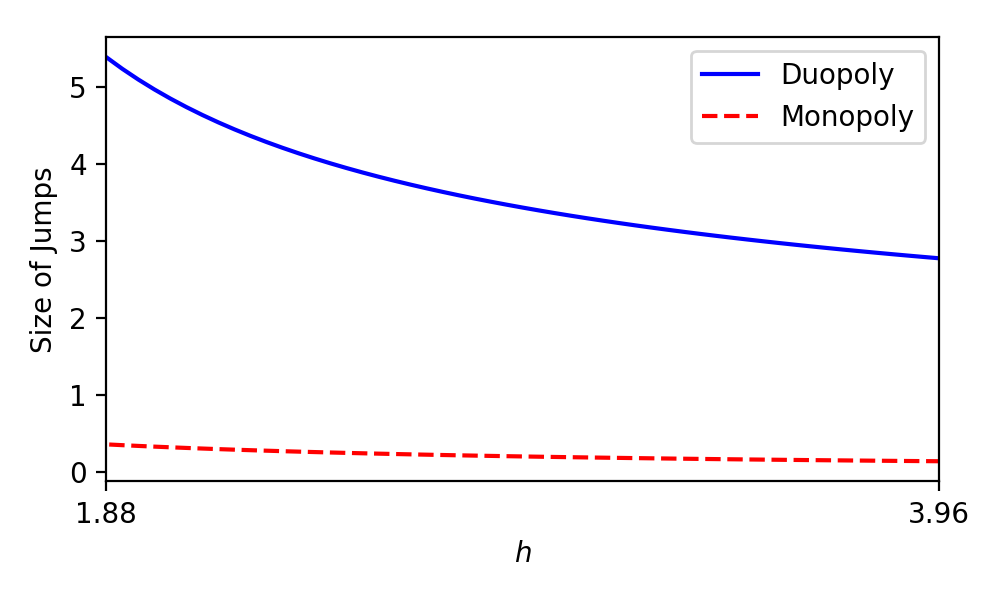}
\caption{Amount of Adjustment of Monopoly and Duopoly}
\label{subfig:2v1-jump-h}
\end{subfigure}
\caption{Sensitivity w.r.t. $h$}
\label{fig:2v1-h}
\end{figure}
\paragraph{Cost of Control $K$ and $k$.}
The parameter $K$ is the fixed cost when players choose to intervene. High fixed cost $K$ discourages the player from intervening too frequently. Therefore players have the incentive to tolerate a larger deviation from the target; and once a player chooses to intervene, the size of control needs to be  bigger to compensate for less frequent controls. Meanwhile, 
a decreasing frequency of intervention leads to an increasing action boundary $u$. See Figure \ref{subfig:2v1-act-K} and  Figure \ref{subfig:2v1-jump-K}.  For the per unit control cost $k$, similar results are shown in Figure \ref{fig:2v1-k}.
\begin{figure}[H]
\centering
\begin{subfigure}[b]{0.4\textwidth}
\centering
\includegraphics[width=\textwidth]{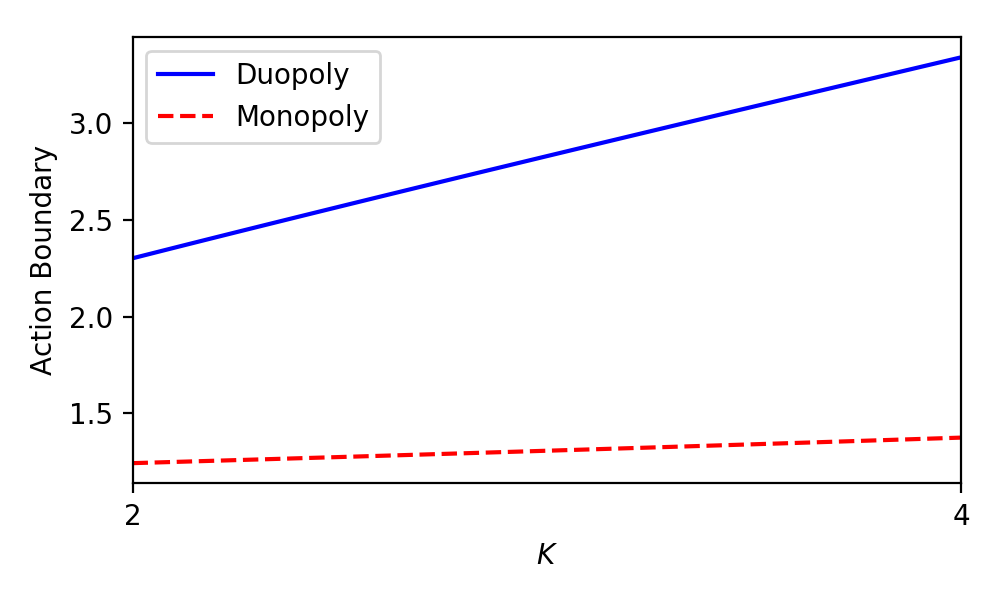}
\caption{Action Boundary of Monopoly and Duopoly}
\label{subfig:2v1-act-K}
\end{subfigure}
\begin{subfigure}[b]{0.4\textwidth}
\centering
\includegraphics[width=\textwidth]{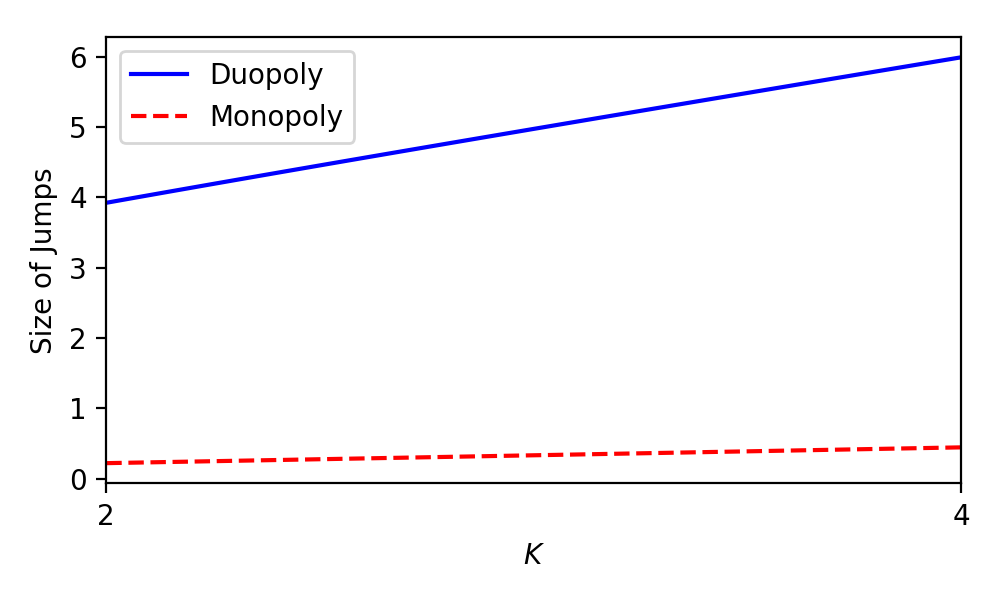}
\caption{Amount of Adjustment of Monopoly and Duopoly}
\label{subfig:2v1-jump-K}
\end{subfigure}
\caption{Sensitivity w.r.t. $K$}
\label{fig:2v1-K}
\end{figure}
\begin{figure}[H]
\centering
\begin{subfigure}[b]{0.4\textwidth}
\centering
\includegraphics[width=\textwidth]{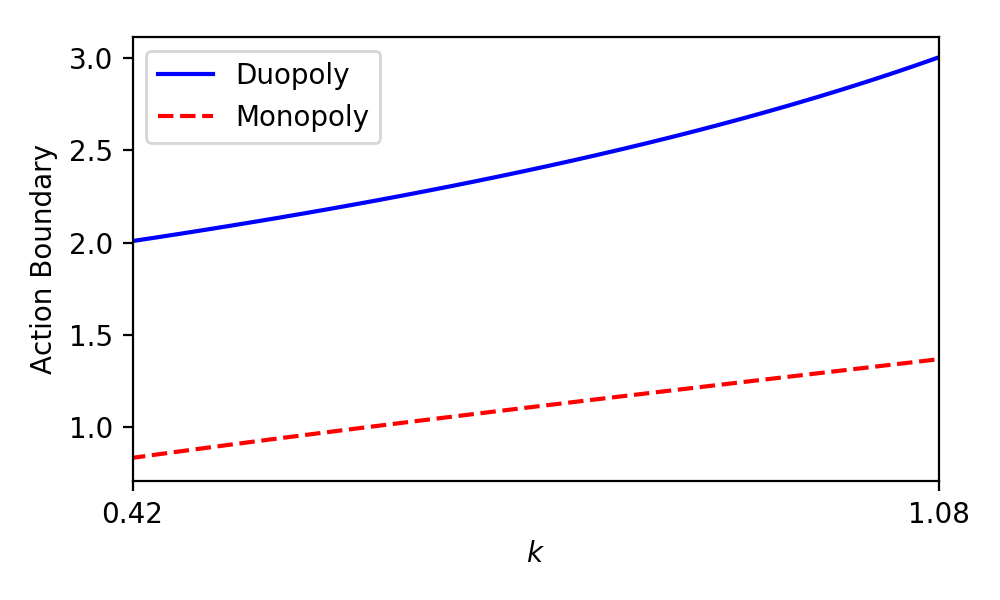}
\caption{Action Boundary of Monopoly and Duopoly}
\label{subfig:2v1-act-kk}
\end{subfigure}
\begin{subfigure}[b]{0.4\textwidth}
\centering
\includegraphics[width=\textwidth]{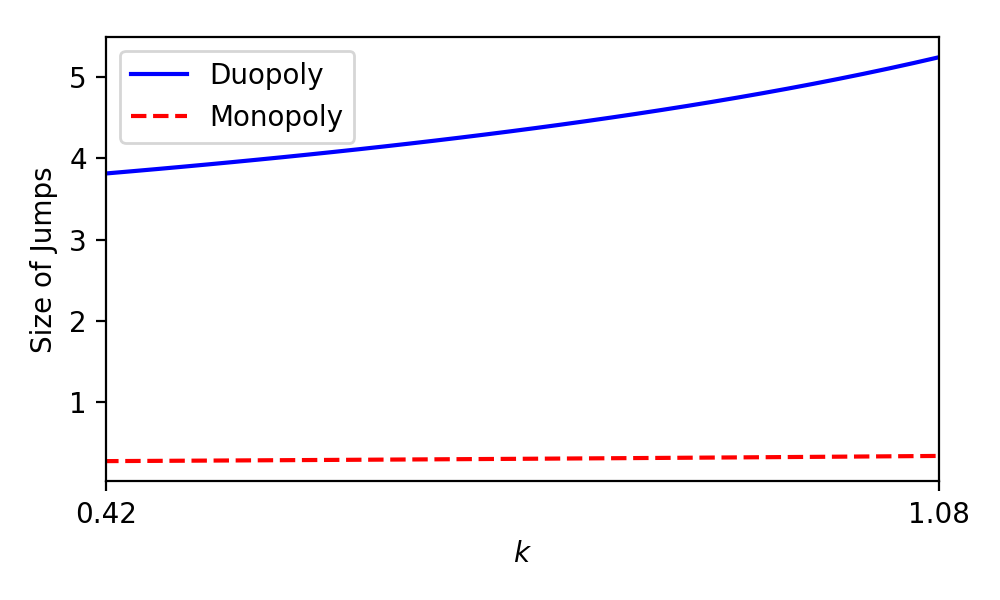}
\caption{Amount of Adjustment of Monopoly and Duopoly}
\label{subfig:2v1-jump-kk}
\end{subfigure}
\caption{Sensitivity w.r.t. $k$}
\label{fig:2v1-k}
\end{figure}
\paragraph{Discount Rate $r$.}    When the discount rate $r$ increases,  players are more tolerant with a larger deviation  from the target level as the penalty is discounted by a larger factor. That is,
a higher discount rate $r$ effectively reduces both the running and control cost, hence resulting in a decreased intervention frequency with an increased size of controls, 
as shown in Figure \ref{fig:2v1-r}.

\begin{figure}[H]
\centering
\begin{subfigure}[b]{0.4\textwidth}
\centering
\includegraphics[width=\textwidth]{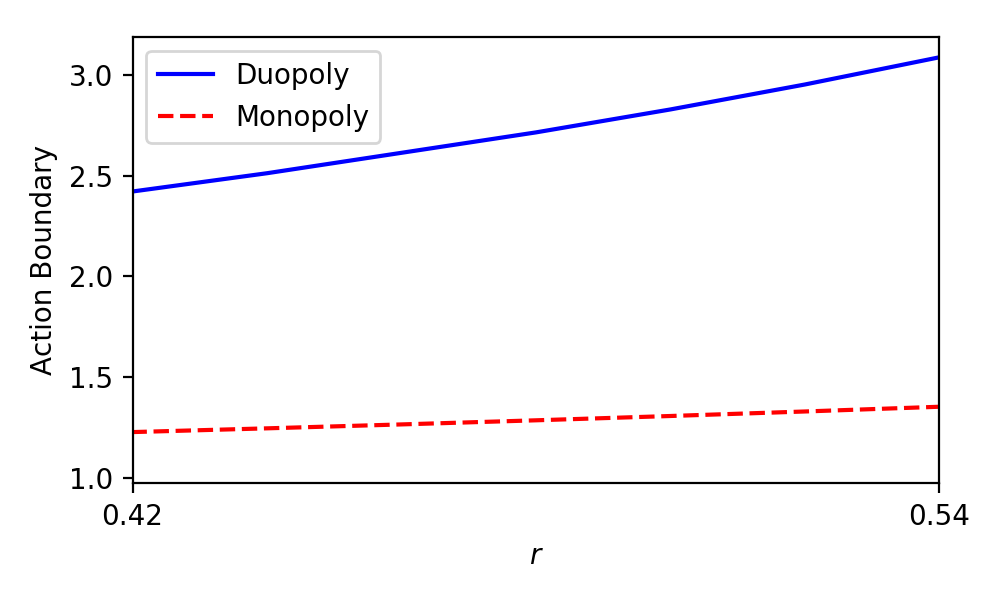}
\caption{Action Boundary of Monopoly and Duopoly}
\label{subfig:2v1-act-r}
\end{subfigure}
\begin{subfigure}[b]{0.4\textwidth}
\centering
\includegraphics[width=\textwidth]{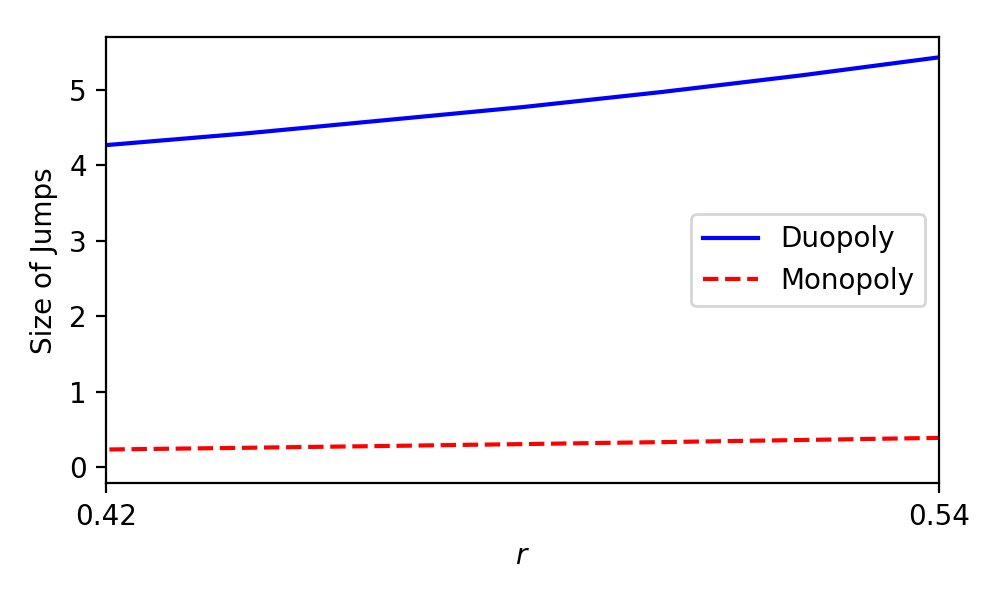}
\caption{Amount of Adjustment of Monopoly and Duopoly}
\label{subfig:2v1-jump-r}
\end{subfigure}
\caption{Sensitivity w.r.t. $r$}
\label{fig:2v1-r}
\end{figure}
\paragraph{Volatility $\mathbf\sigma$.} When the volatility $\sigma$ is bigger, players tend to intervene less as the controlled process is more likely to move
closer to the target level with a higher volatility. Therefore, a higher volatility allows players to intervene less frequently with a larger amount of adjustment, as shown in Figure \ref{fig:2v1-s}.
\begin{figure}[H]
\centering
\begin{subfigure}[b]{0.4\textwidth}
\centering
\includegraphics[width=\textwidth]{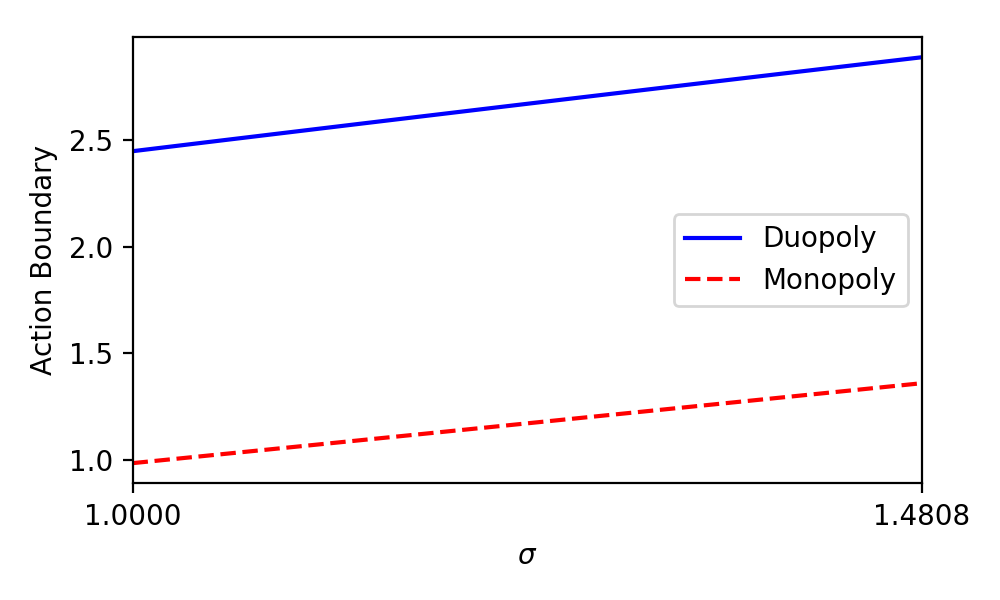}
\caption{Action Boundary of Monopoly and Duopoly}
\label{subfig:2v1-act-s}
\end{subfigure}
\begin{subfigure}[b]{0.4\textwidth}
\centering
\includegraphics[width=\textwidth]{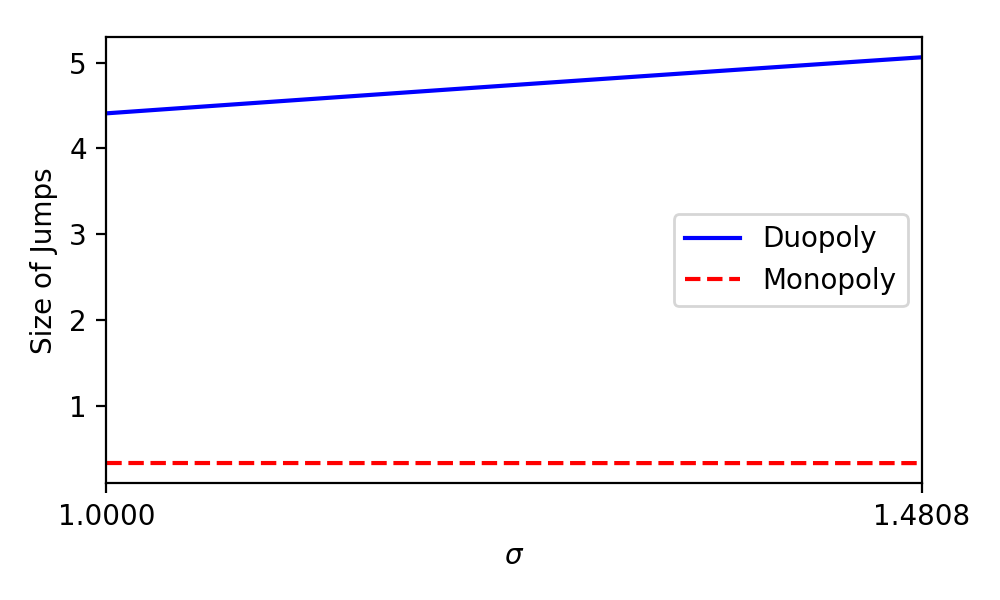}
\caption{Amount of Adjustment of Monopoly and Duopoly}
\label{subfig:2v1-jump-s}
\end{subfigure}

\caption{Sensitivity w.r.t. $\sigma$}
\label{fig:2v1-s}
\end{figure}

\subsection{Sensitivity Analysis of the MFG}
Here we present the sensitivity analysis of the MFG solution with respect the model parameters.
In particular, we look into the impact of the list of parameters, namely $h$, $p$, $K^\pm$, $k^\pm$, $r$ and $\sigma$, on the optimal impulse control policy and on the value of the mean information in the solution. Here we take one particular form of the $\alpha$ function  $\alpha(m) = \alpha m$ with $0<\alpha<1$.

\paragraph{Running cost $h$ and $p$.} Parameters relevant to running costs are holding cost $h$ and penalty cost $p$. When running cost increases, players have the incentive to intervene more frequently to prevent the state from deviating too far away from the target level $\alpha$; at the meantime, players pay extra precaution for each intervention so the jump size reduces. Figure \ref{fig:h} shows the impact of increasing holding cost $h$ with $p=2$, $K^-=3$, $k^-=0.5$, $K^+=3.25$, $k^+=1.5$, $r=0.5$, $\sigma=1$. It primarily affects the upper action boundary $u$, causing it to decreases. Meanwhile the gap between the $u$ and $U$ decreases as well. As for the mean information, eventually, increasing $h$ causes the mean information to decrease. Figure \ref{fig:p} illustrates the effect of increasing $p$ with $h=1$, $K^-=3$, $k^-=1$, $K^+=3.25$, $k^+=1.5$, $r=0.5$, $\sigma=1$. As opposed to increasing $h$, increasing $p$ primarily impacts $d$, causing to increase, while the gap between $d$ and $D$ decreases. Mostly it leads the mean information to increase.

\begin{figure}[H]
\centering
\begin{subfigure}[t]{0.48\textwidth}
\centering
\includegraphics[width=\textwidth]{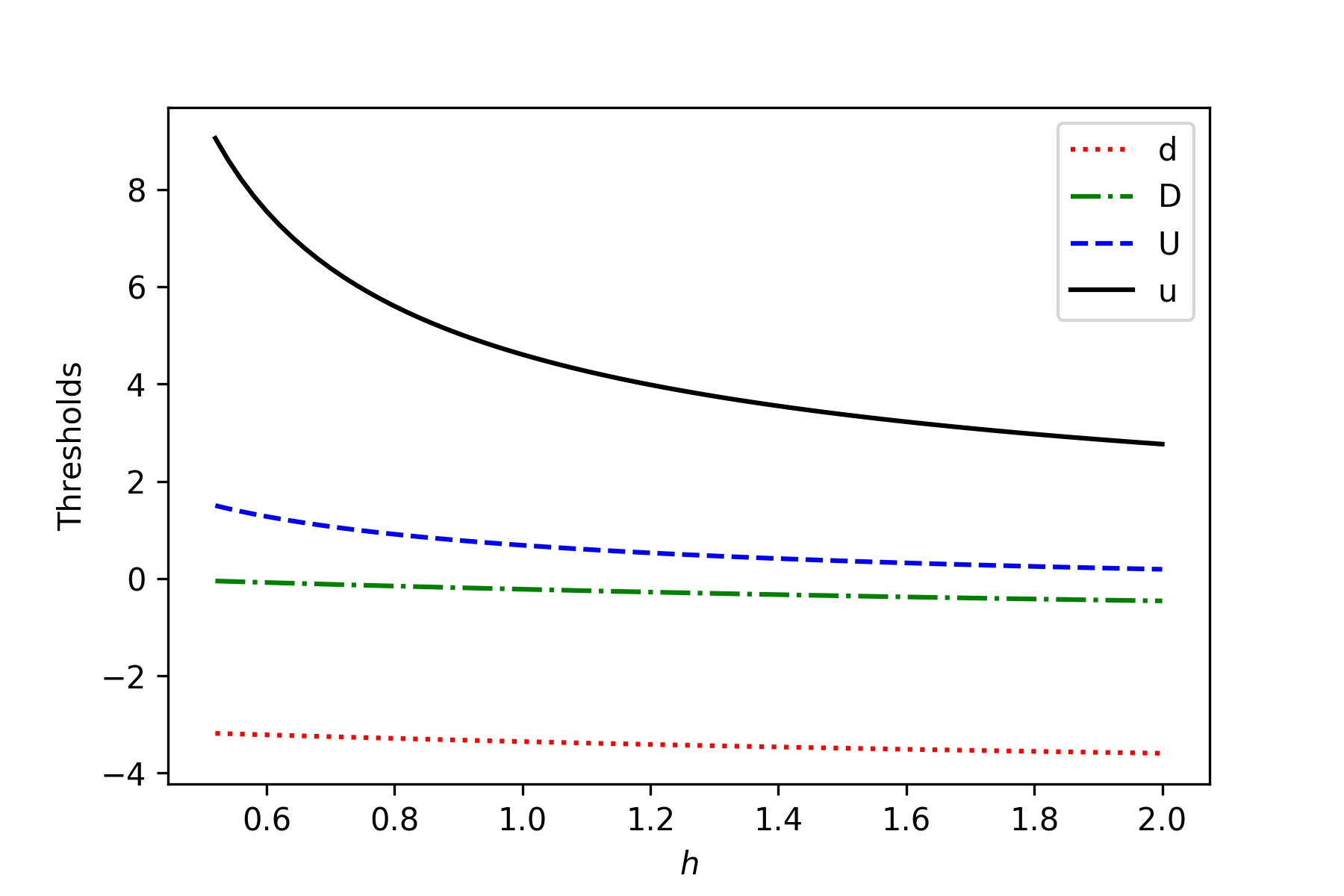}
\caption{All thresholds}
\label{subfig:all_h}
\end{subfigure}
~
\begin{subfigure}[t]{0.48\textwidth}
\centering
\includegraphics[width=\textwidth]{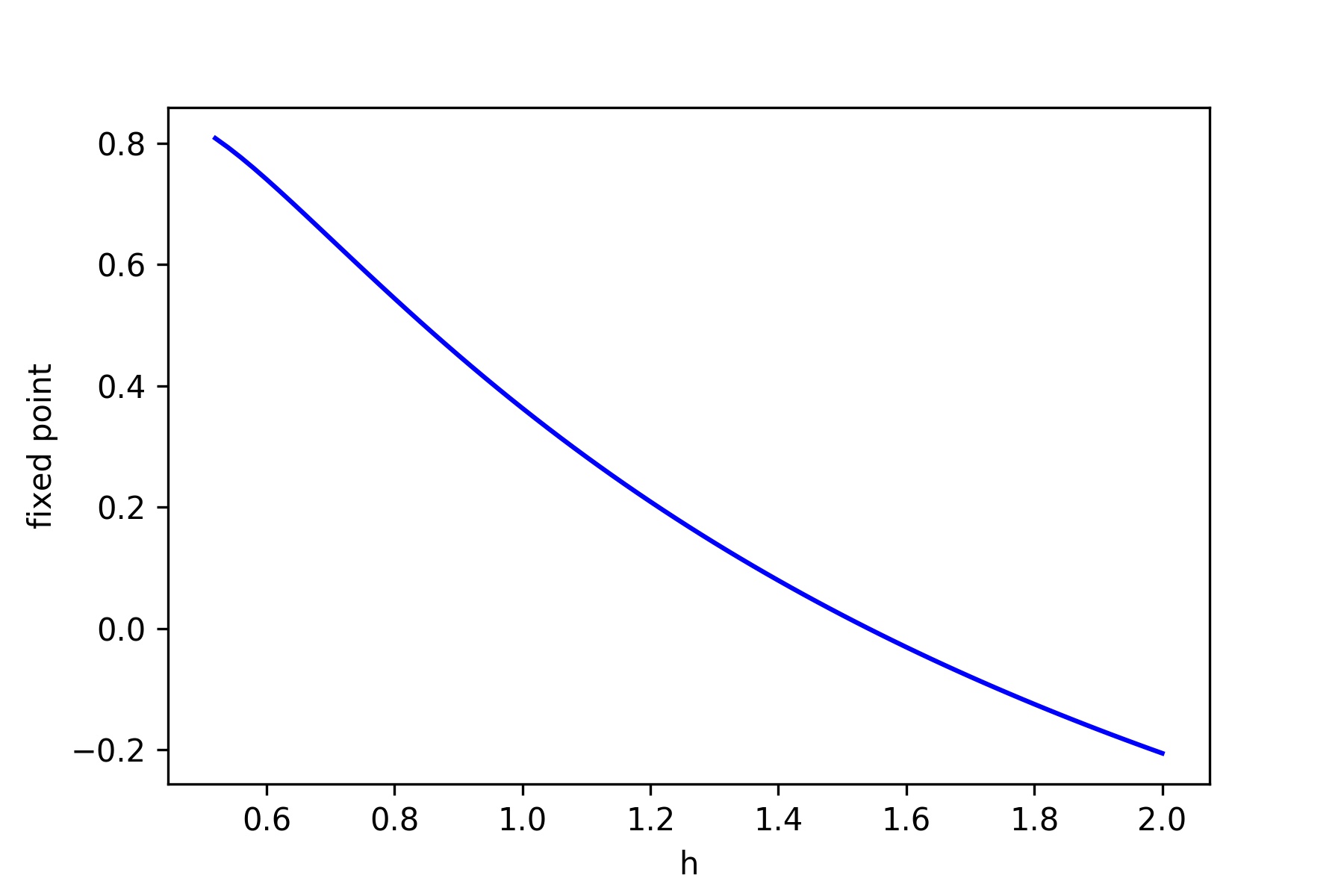}
\caption{Fixed point}
\label{subfig:fp_h}
\end{subfigure}
\caption{Sensitivity w.r.t. $h$}
\label{fig:h}
\end{figure}

\begin{figure}[H]
\centering
\begin{subfigure}[t]{0.48\textwidth}
\centering
\includegraphics[width=\textwidth]{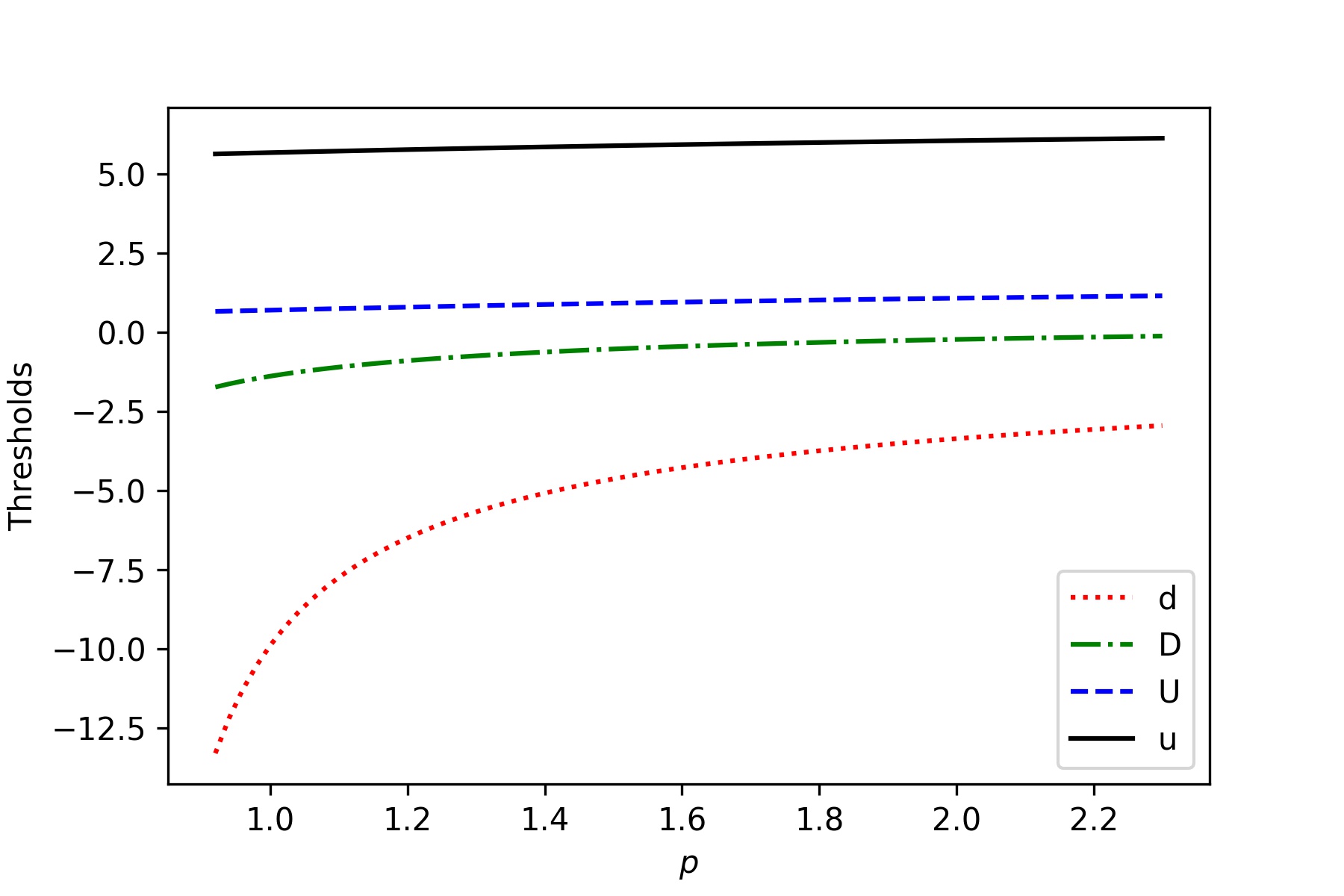}
\caption{All thresholds}
\label{subfig:all_p}
\end{subfigure}
~
\begin{subfigure}[t]{0.48\textwidth}
\centering
\includegraphics[width=\textwidth]{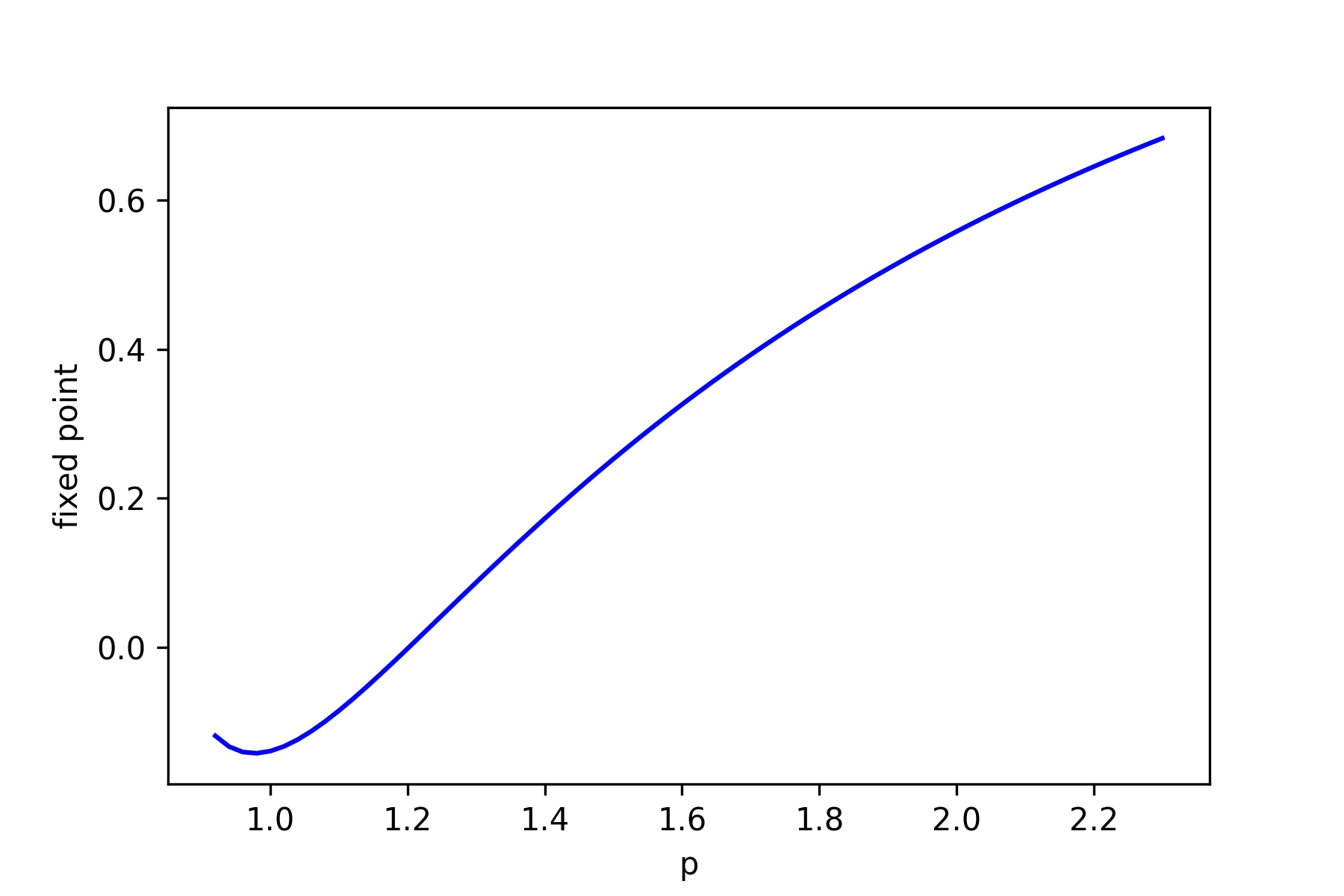}
\caption{Fixed point}
\label{subfig:fp_p}
\end{subfigure}
\caption{Sensitivity w.r.t. $p$}
\label{fig:p}
\end{figure}

\paragraph{Cost of instant decrease $K^-$ and $k^-$.}The parameter $K^-$ is the amount of fixed cost to pay every time a player chooses to decrease the state. A high fixed cost $K^-$ discourages the player from deceasing the state too often. Therefore the players have the incentive to tolerate a high state value compared with the reference point $\alpha$  and once a player chooses to decrease its state, the amount is adjustment will be larger. The more obvious impact is that the distance between $\alpha$ and the upper bound of the non-action region, i.e. $u$, increases; the impact on $U$ is trickier, as the per unit cost of decreasing the state stays unchanged. The impact of the increasing $K^-$ on the rest of the thresholds becomes the trade-of between unchanged holding cost, penalty cost, cost of increasing the state, and the costs associated with potentially more frequent but larger in scales, instantaneous increases in the state variables. 
\begin{figure}[H]
\centering
\begin{subfigure}[t]{0.48\textwidth}
\centering
\includegraphics[width=\textwidth]{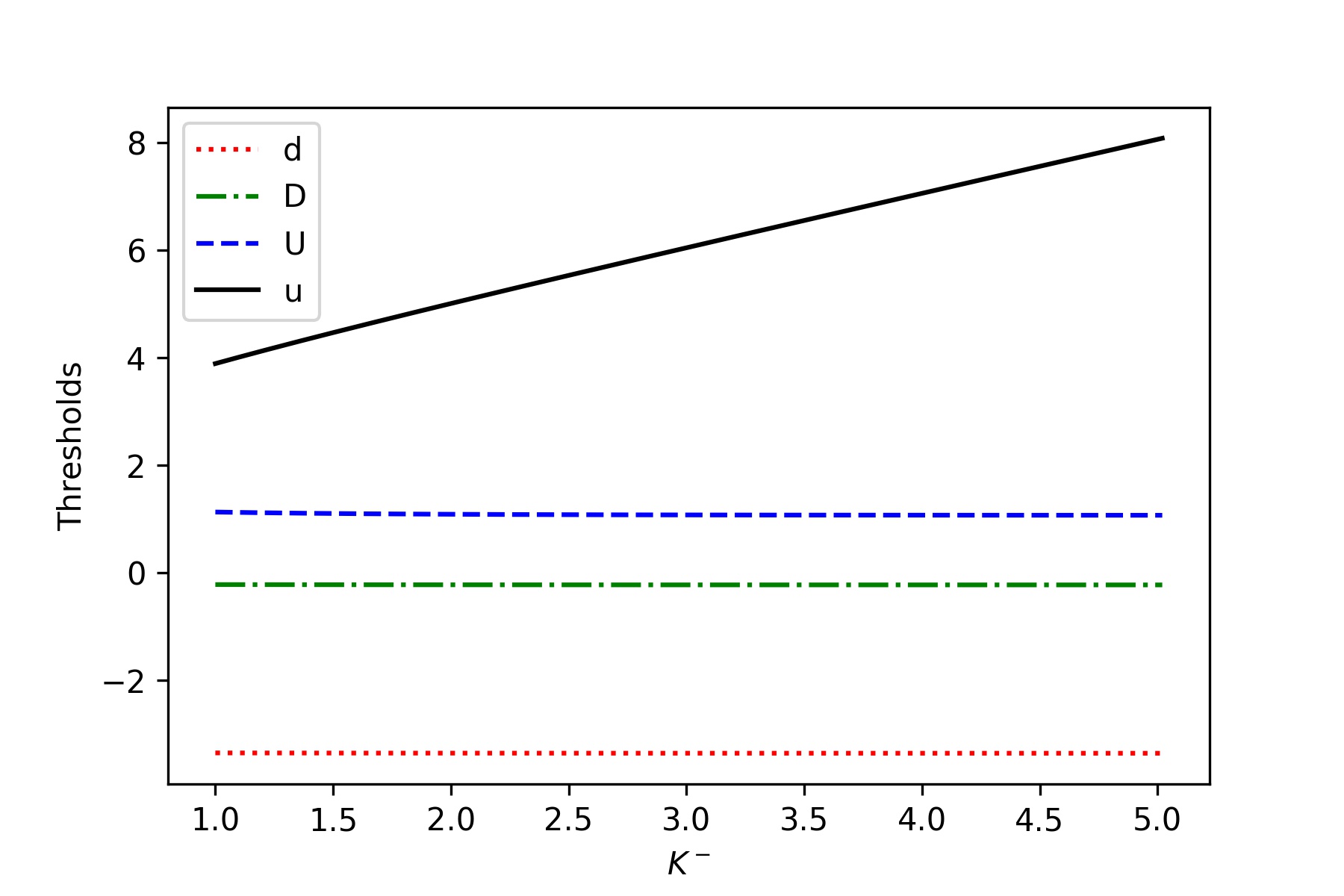}
\caption{All thresholds}
\label{subfig:all_K-}
\end{subfigure}
~
\begin{subfigure}[t]{0.48\textwidth}
\centering
\includegraphics[width=\textwidth]{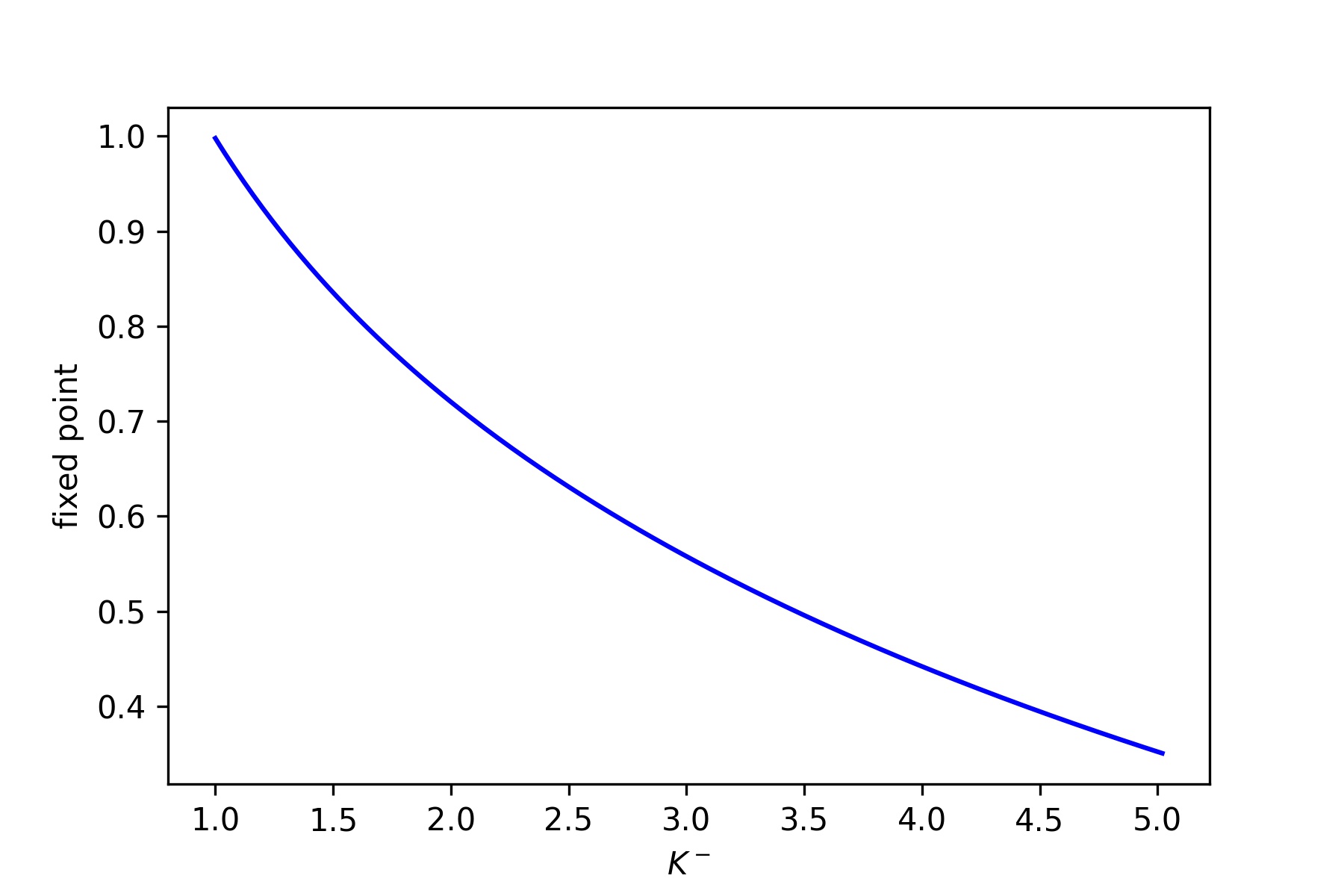}
\caption{Fixed point}
\label{subfig:fp_K-}
\end{subfigure}
\caption{Sensitivity w.r.t. $K^-$}
\label{fig:K-}
\end{figure}
Figure \ref{fig:K-} illustrates the effect of varying $K^-$ from 1 to 5 under fixed values of the rest of the parameters, where $h=1$, $p=2$, $k^-=1$, $K^+=3.25$, $k^+=1.5$, $r=0.5$, $\sigma=1$. We can see from Figure \ref{subfig:all_K-} that as $K^-$ grows, $u$ is affected the most and it increases significantly indicating a decreased frequency of intervention, with a larger jumper size. Similar impact on control policy can be observed with an increasing $k^-$, shown in Figure \ref{fig:k-}. It pushes the equilibrium mean information to a lower level, as shown in Figure $\ref{subfig:fp_K-}$.
\begin{figure}[H]
\centering
\begin{subfigure}[t]{0.48\textwidth}
\centering
\includegraphics[width=\textwidth]{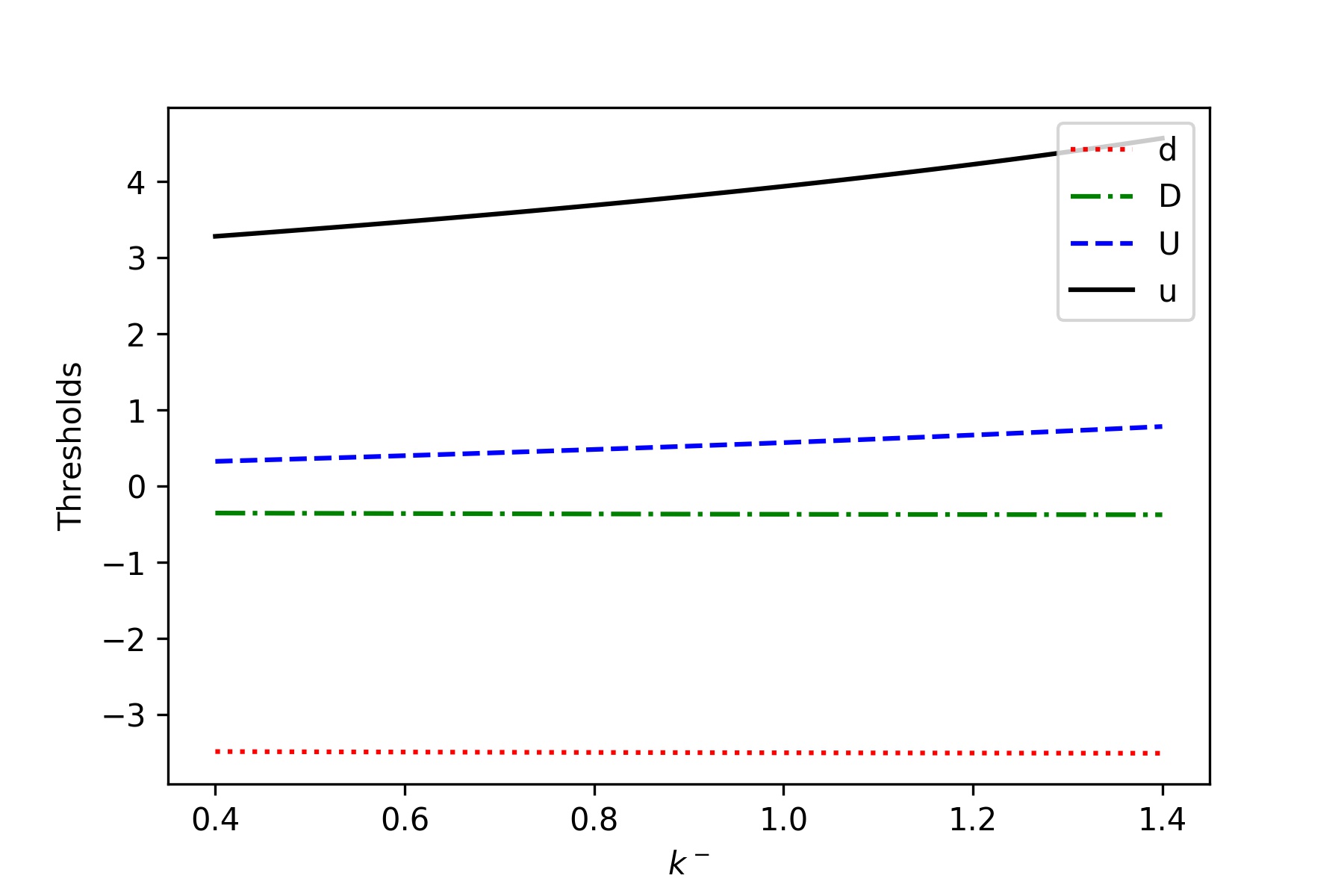}
\caption{All thresholds}
\label{subfig:all_k-}
\end{subfigure}
~
\begin{subfigure}[t]{0.48\textwidth}
\centering
\includegraphics[width=\textwidth]{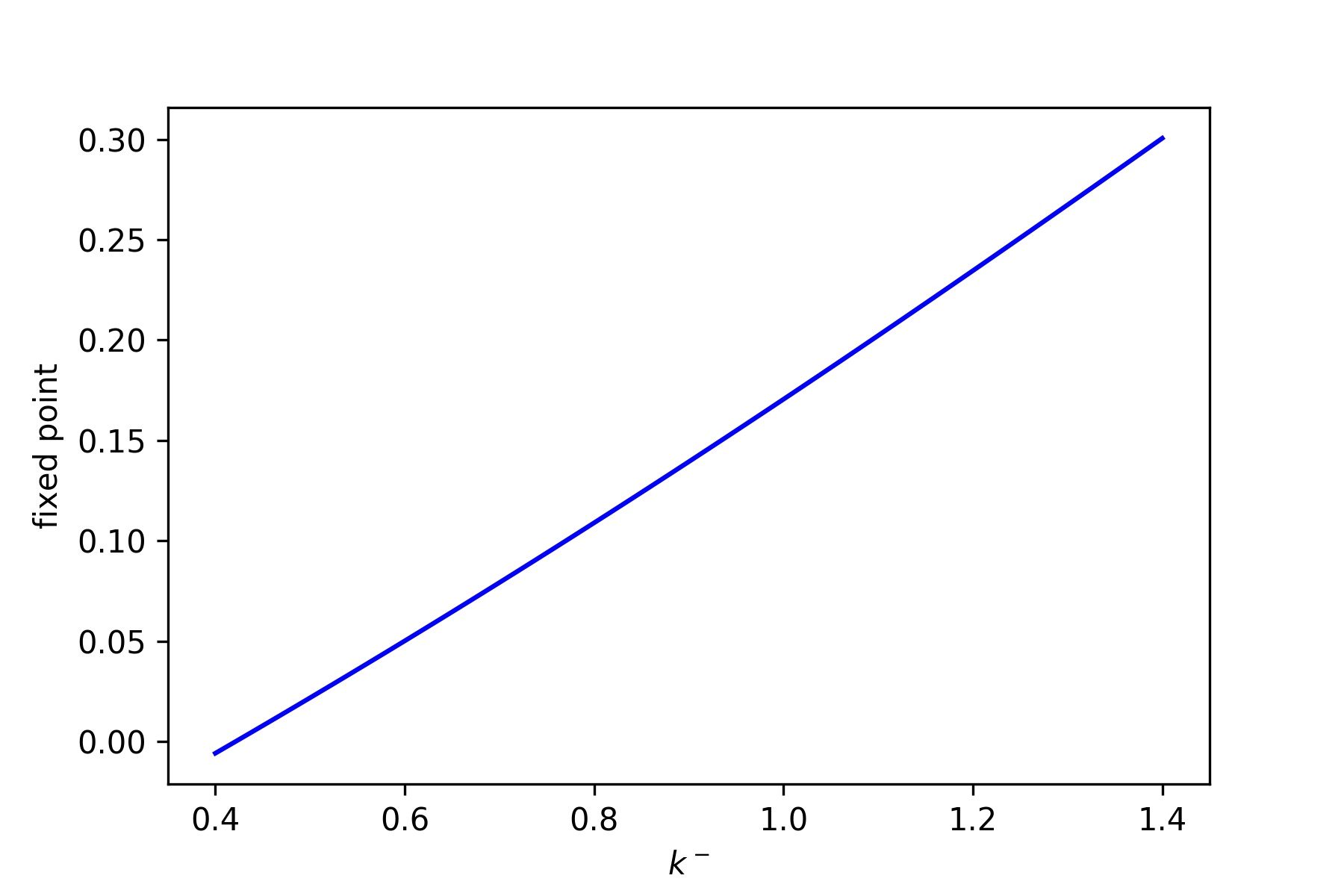}
\caption{Fixed point}
\label{subfig:fp_k-}
\end{subfigure}
\caption{Sensitivity of the thresholds w.r.t. $k^-$}
\label{fig:k-}
\end{figure}
A lower frequency of decreasing can lead the state to move right while larger jump size may push the state to the left. While, as shown in Figure \ref{subfig:fp_K-}, increasing $K^-$ results in a decreasing mean information, increasing $k^-$ causes mean information to increasing, see Figure \ref{subfig:fp_k-}.

\paragraph{Cost of instant increase $K^+$ and $k^+$.}The parameter $K^+$ is the amount of fixed cost to pay every time a player chooses to increase the state. A high fixed cost $K^+$ discourages the player from increasing the state too often. Therefore the players have the incentive to tolerate a low state value compared with the reference point $\alpha$  and once a player chooses to increase its state, the amount is adjustment will be larger. The more obvious impact is that the distance between $\alpha$ and the lower bound of the non-action region, i.e. $d$, decreases; the impact on $D$ is trickier, as the per unit cost of increasing the state stays unchanged. The impact of the increasing $K^+$ on the rest of the thresholds becomes the trade-of between unchanged holding cost, penalty cost, cost of decreasing the state, and the costs associated with potentially more frequent but larger in scales, instantaneous decreases in the state variables. 
\begin{figure}[H]
\centering
\begin{subfigure}[t]{0.48\textwidth}
\centering
\includegraphics[width=\textwidth]{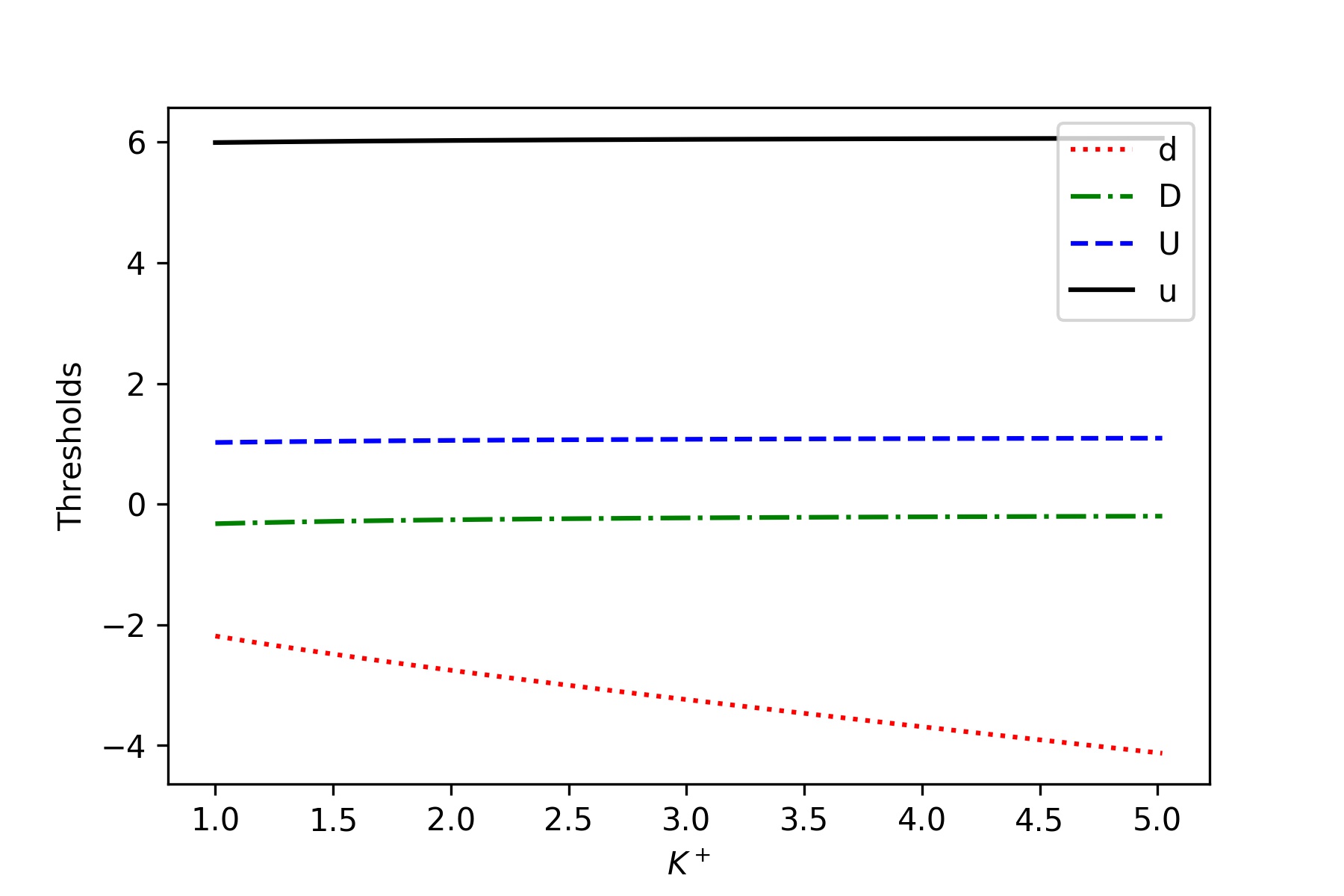}
\caption{All thresholds}
\label{subfig:all_K+}
\end{subfigure}
~
\begin{subfigure}[t]{0.48\textwidth}
\centering
\includegraphics[width=\textwidth]{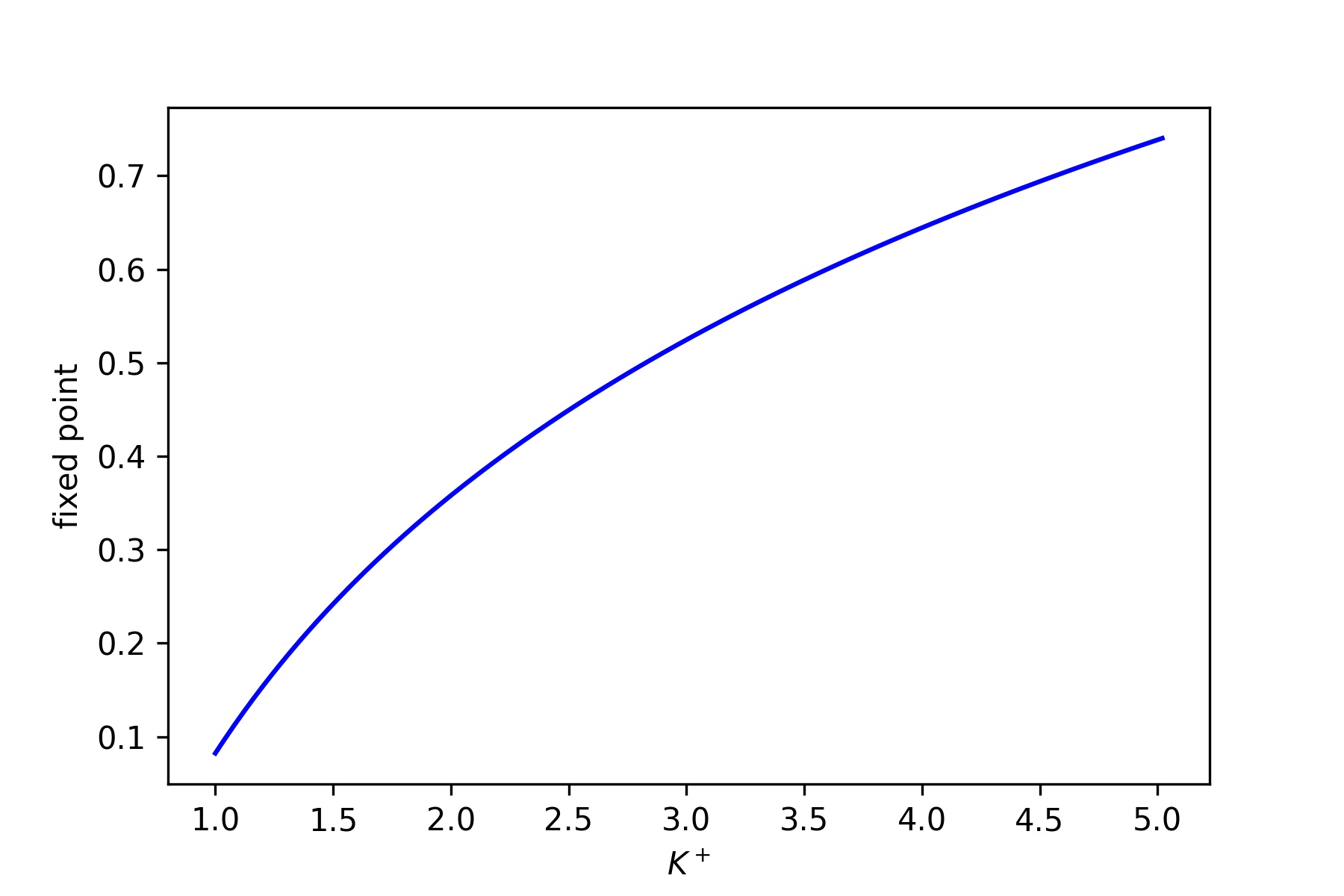}
\caption{Fixed point}
\label{subfig:fp_K+}
\end{subfigure}
\caption{Sensitivity w.r.t. $K^+$}
\label{fig:K+}
\end{figure}
Figure \ref{fig:K+} illustrates the effect of varying $K^+$ from 1 to 5 under fixed values of the rest of the parameters, where $h=1$, $p=2$, $K^-=3$, $k^-=1$, $k^+=1.5$, $r=0.5$, $\sigma=1$. We can see from Figure \ref{subfig:all_K+} that as $K^+$ grows, $d$ gets the most influence and decreases significantly, indicating a lower frequency of intervention with larger jump size. Similar impact on control policy can be seen if $k^+$ increases. 
\begin{figure}[H]
\centering
\begin{subfigure}[t]{0.48\textwidth}
\centering
\includegraphics[width=\textwidth]{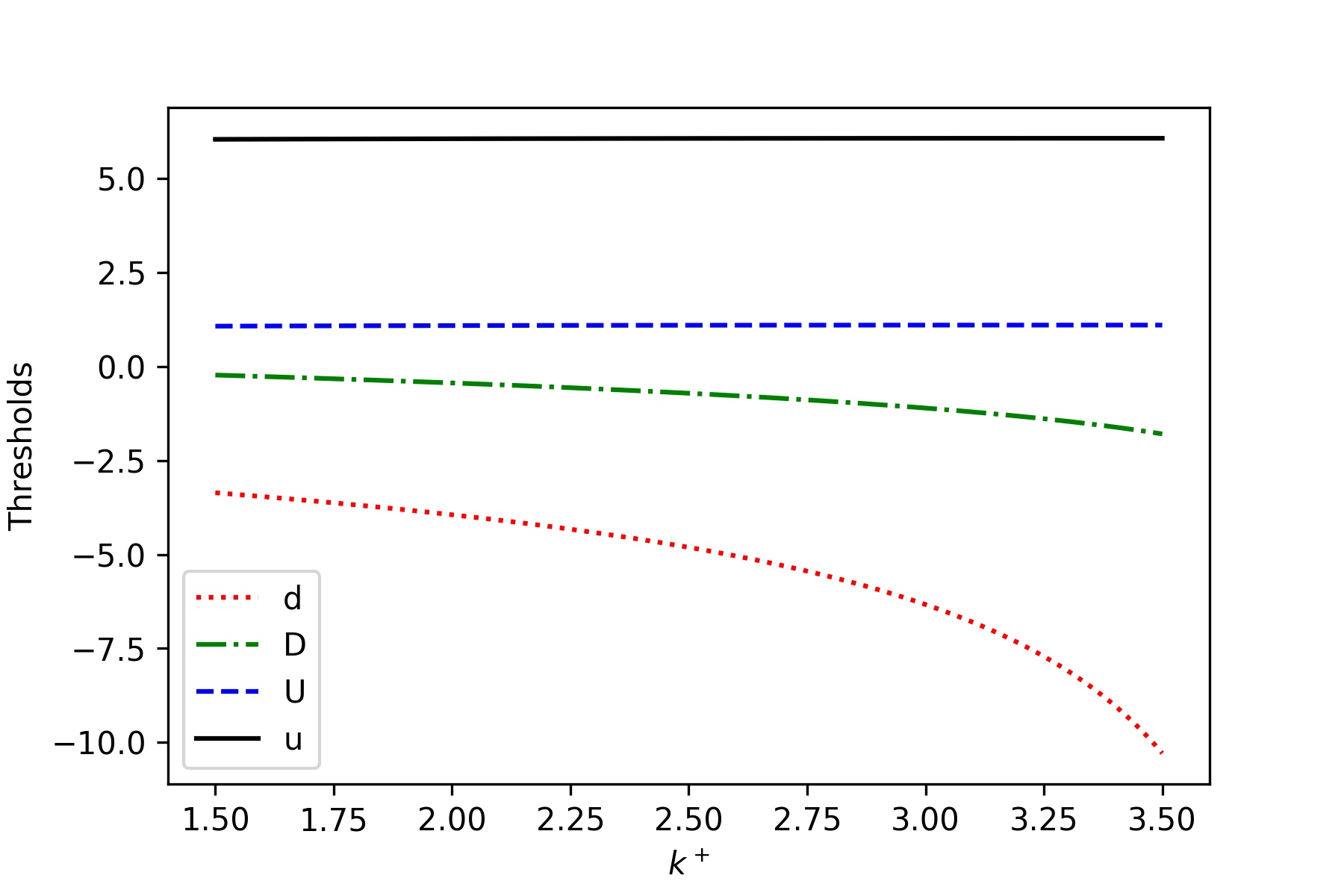}
\caption{All thresholds}
\label{subfig:all_k+}
\end{subfigure}
~
\begin{subfigure}[t]{0.48\textwidth}
\centering
\includegraphics[width=\textwidth]{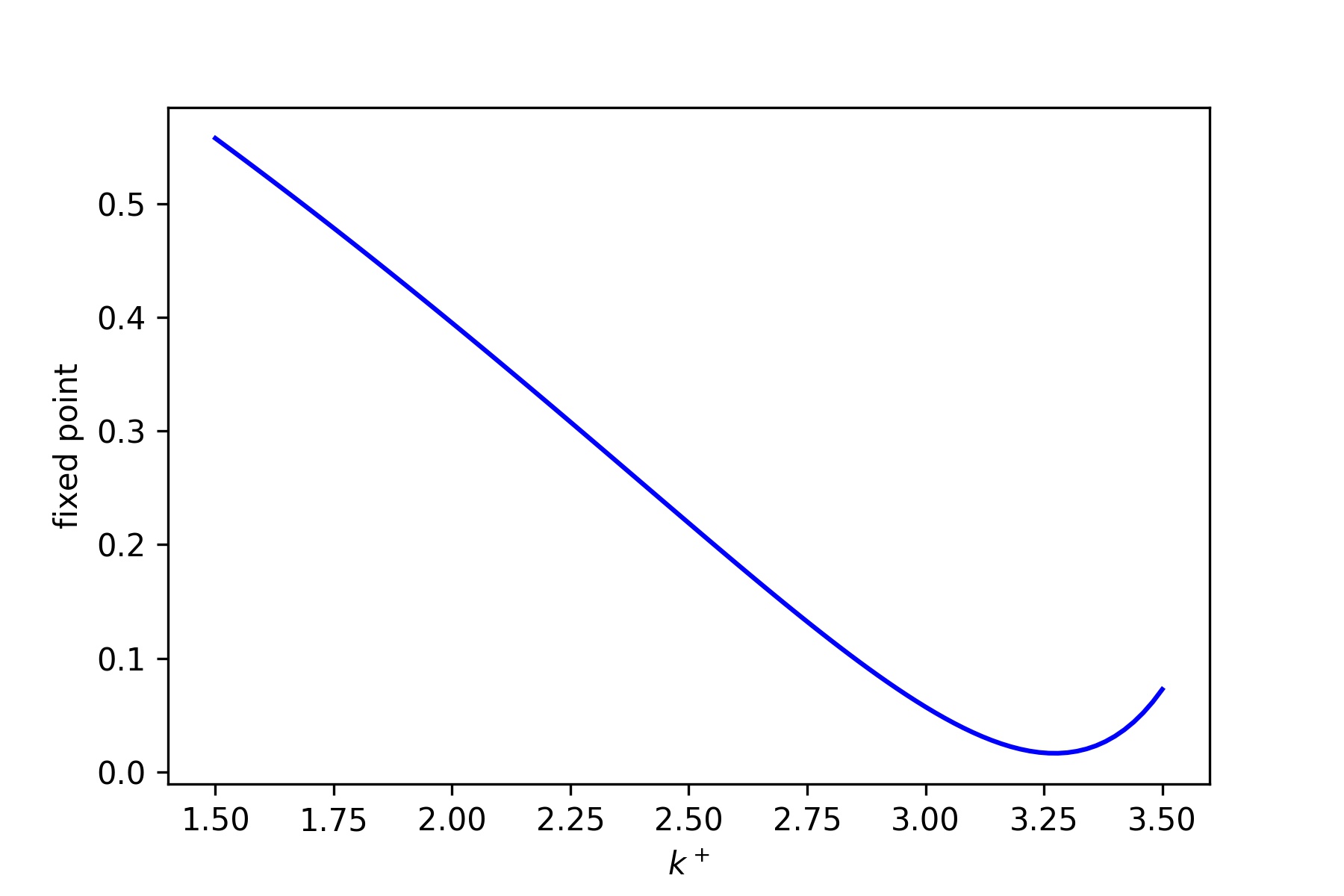}
\caption{Fixed point}
\label{subfig:fp_k+}
\end{subfigure}
\caption{Sensitivity w.r.t. $k^+$}
\label{fig:k+}
\end{figure}
A lower frequency of increase motivates the state to go left while larger jump size can push the state to the right. The impacts on the mean information by varying $K^+$ and $k^+$ are different as shown in Figure \ref{subfig:fp_K+} and Figure \ref{subfig:fp_k+}.

\paragraph{Discount rate $r$.} Figure \ref{fig:r} illustrates the effect of varying $r$ from 0.2 to 1.2 under fixed values of the rest of the parameters, where $h=1.5$, $p=2.5$, $K^-=3$, $k^-=1$, $K^+=3.25$, $k^+=1.5$, $\sigma=1$.
\begin{figure}[h]
\centering
\begin{subfigure}[t]{0.48\textwidth}
\centering
\includegraphics[width=\textwidth]{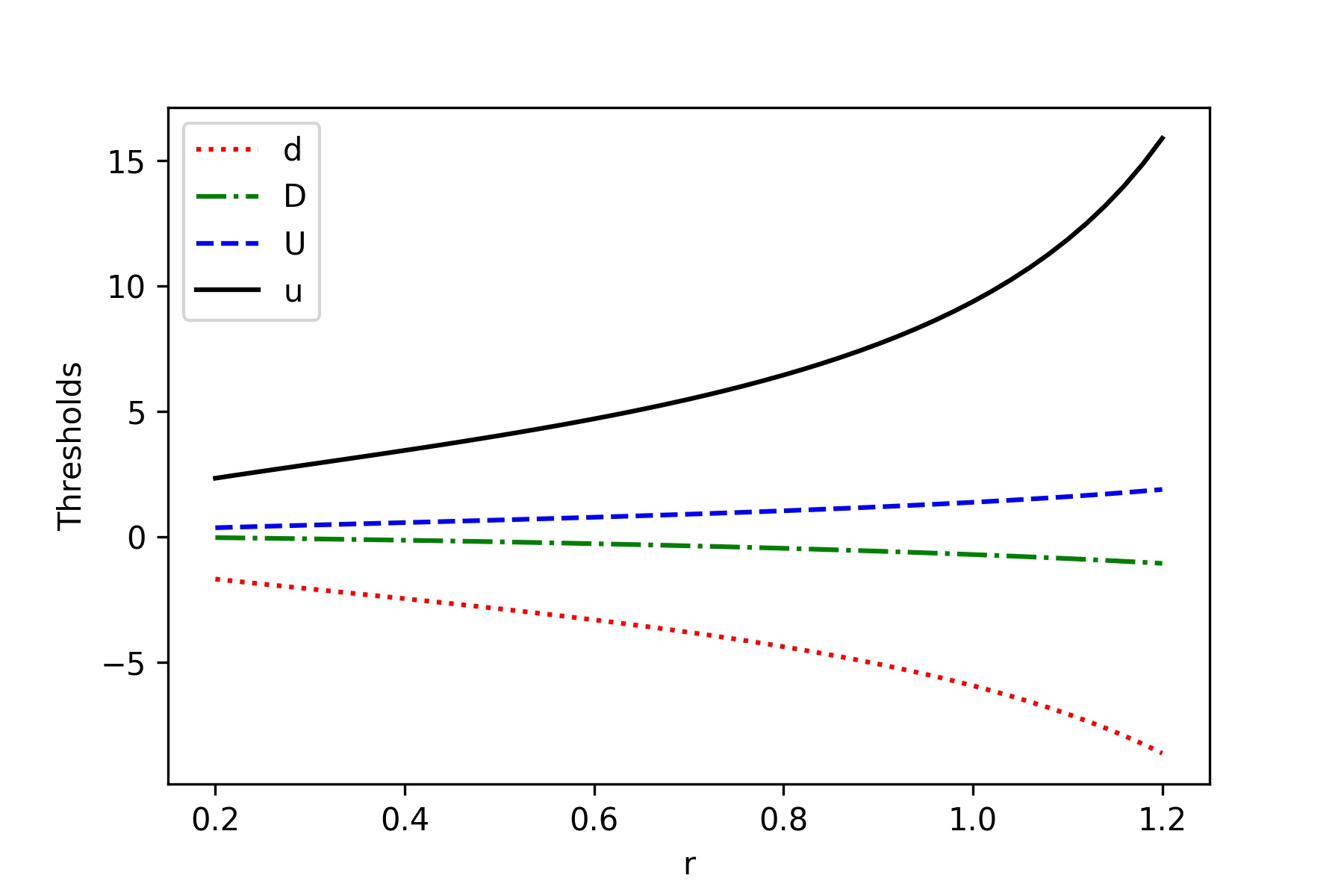}
\caption{All thresholds}
\label{subfig:all_r}
\end{subfigure}
~
\begin{subfigure}[t]{0.48\textwidth}
\centering
\includegraphics[width=\textwidth]{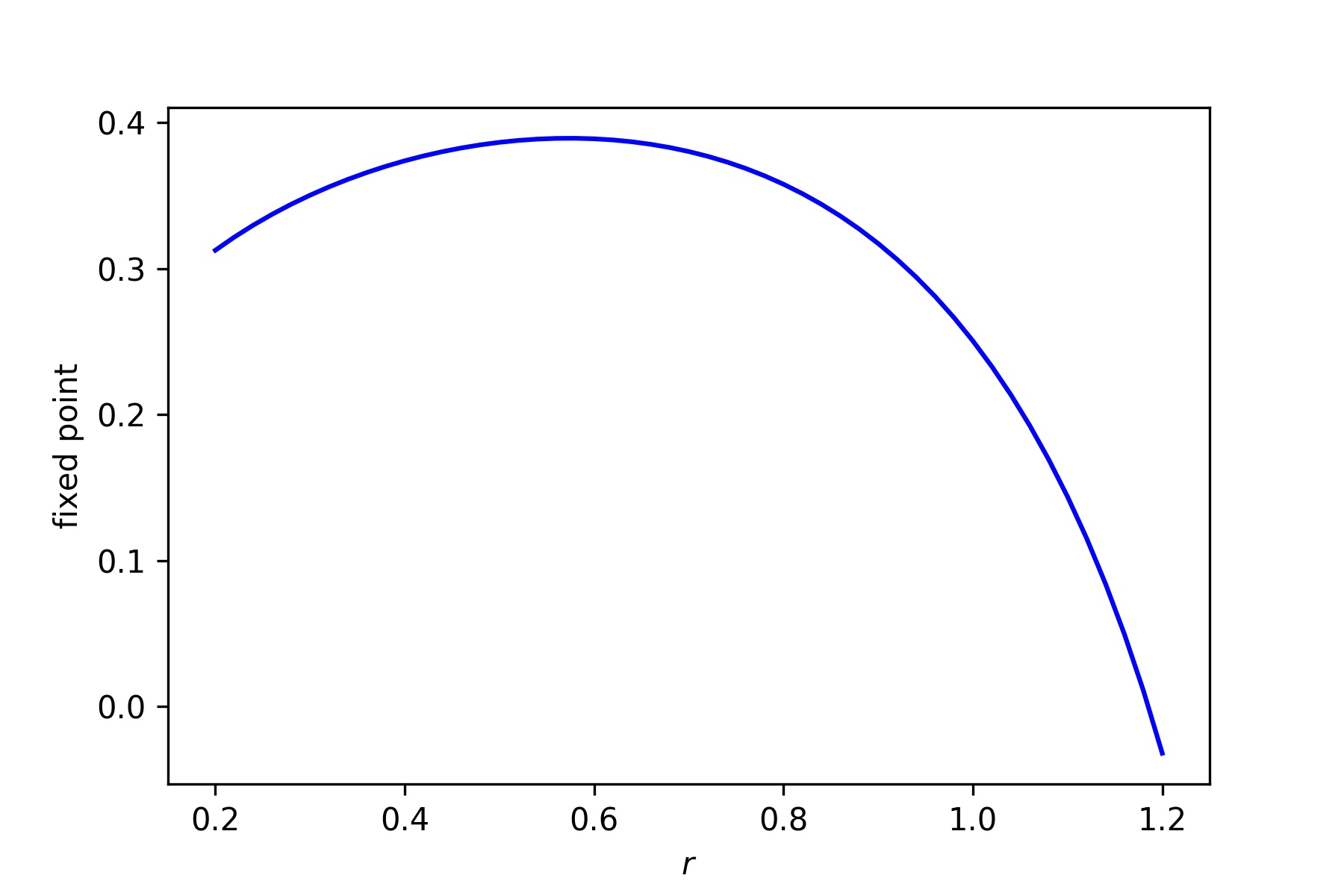}
\caption{Fixed point}
\label{subfig:fp_r}
\end{subfigure}
\caption{Sensitivity w.r.t. $r$}
\label{fig:r}
\end{figure}
We can see in Figure \ref{subfig:all_r} that as $r$ grows, $d$ and $u$ get influenced the most indicating a decreased intervention in both directions, with a larger jump size. A lower decreasing frequency and a higher intensity of instant increases can both push the state the right while a lower increasing frequency and a higher intensity of instant decreases can both cause the state to go left. The influence on the mean information is shown in Figure $\ref{subfig:fp_r}$, that the mean first increases and then decreases as $r$ grows.

\paragraph{Individual volatility $\sigma$.} Figure \ref{fig:sig} illustrates the effect of varying $\sigma$ from 0.5 to 2 under fixed values of the rest of the parameters, where $h=1$, $p=2$, $K^-=3$, $k^-=1$, $K^+=3.25$, $k^+=1.5$, $r=0.5$. 
\begin{figure}[h]
\centering
\begin{subfigure}[t]{0.48\textwidth}
\centering
\includegraphics[width=\textwidth]{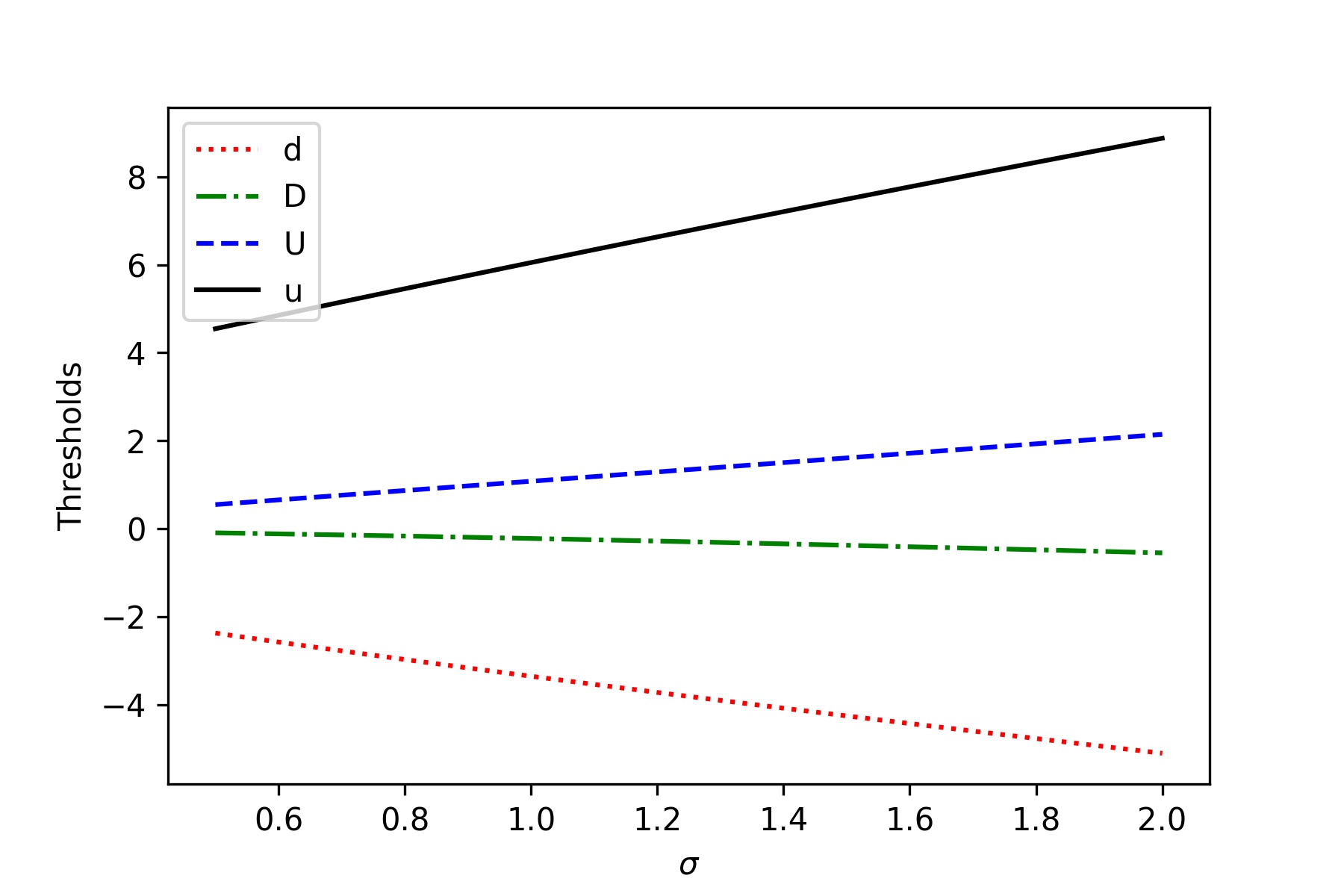}
\caption{All thresholds}
\label{subfig:all_sig}
\end{subfigure}
~
\begin{subfigure}[t]{0.48\textwidth}
\centering
\includegraphics[width=\textwidth]{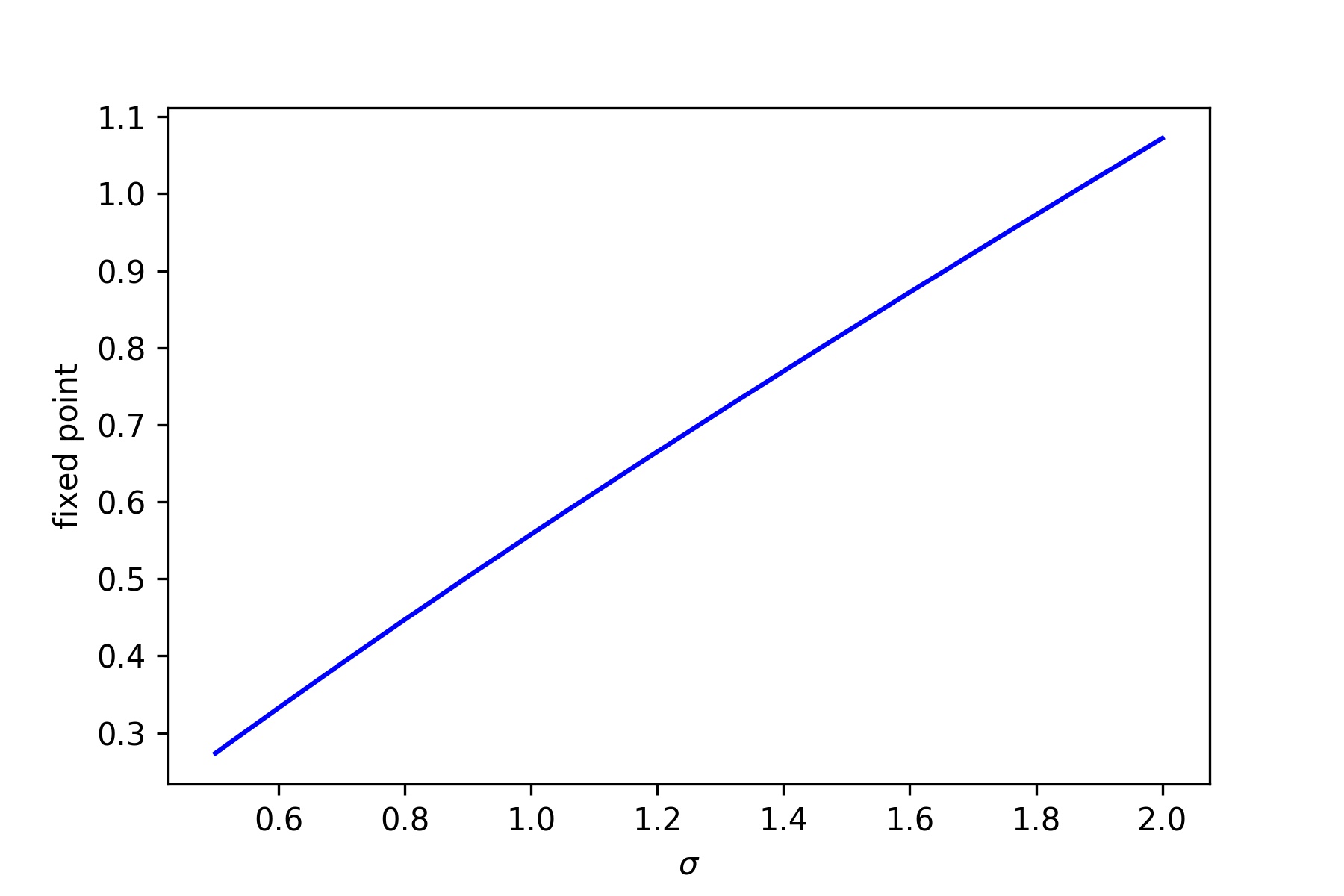}
\caption{Fixed point}
\label{subfig:fp_sig}
\end{subfigure}
\caption{Sensitivity w.r.t. $\sigma$}
\label{fig:sig}
\end{figure}
We can see from Figure \ref{subfig:all_sig} that as $\sigma$ grows, $d$ and $u$ get influenced the most indicating a decreased intervention in both directions, with a larger jump size. The influence on the mean information is shown in Figure $\ref{subfig:fp_sig}$, that the mean increases as $\sigma$ grows.

\bibliographystyle{plain}
\bibliography{impulse-gameUPDATEDjune2020}

\end{document}